\theoremstyle{plain}
\newtheorem{theorem}{Theorem}[section]
\newtheorem{corollary}[theorem]{Corollary}
\newtheorem{definition}[theorem]{Definition}
\theoremstyle{remark}
\newtheorem{remark}[theorem]{Remark}
\theoremstyle{plain}
\newtheorem{proposition}[theorem]{Proposition}
\numberwithin{equation}{section}
\theoremstyle{plain}
\newtheorem{lemma}[theorem]{Lemma}
\theoremstyle{plain}
\theoremstyle{plain}
\newtheorem{prop}[theorem]{Proposition}
\theoremstyle{definition}
\newtheorem{defn}[theorem]{Definition}
\def\p{\partial}
\def\b{\bar}
\def\mb{\mathbb}
\def\mc{\mathcal}
\def\n{\nabla}
\begin{document}

\title{Scalar curvature rigidity of domains in a warped product}

\author{Xiaoxiang Chai}
\address{Department of Mathematics, POSTECH, 77 Cheongam-Ro, Nam-Gu, Pohang, Gyeongbuk, Korea 37673}
\email{xxchai@kias.re.kr, xxchai@postech.ac.kr}

\author{Xueyuan Wan}
\address{Mathematical Science Research Center, Chongqing University of Technology,\newline Chongqing 400054, China}
\email{xwan@cqut.edu.cn}

\keywords{Scalar curvature, mean curvature, dihedral rigidity, spinor, Dirac operator.}
\subjclass[2020]{53C24, 53C27, 53C21}

\begin{abstract}
  By exploiting the conformality of a warped product metric with a direct
product metric, we develop a new connection on a twisted spinor bundle
and its associated Dirac operator. We obtain a Llarull type scalar curvature
rigidity for a general class of domains in a warped product.
Also, we are able to address Gromov dihedral rigidity in hyperbolic space assuming matching angles.
\end{abstract}

\maketitle

\section{Introduction}

Given a manifold $(M, g)$ with boundary, we denote $\nu := \nu_g$ the inward
pointing unit normal vector with respect to $g$. We denote by $\nabla^g$ by the
Levi-Civita connection of $g$ and $R_g$ the scalar curvature of $g$, $A_g = -
\nabla^g \nu$ the second fundamental form of $\partial M$ with respect to $g$,
$H_g$ the mean curvature of $\partial M$ in $M$ with respect to $g$ which is
the trace of the second fundamental form.

Let $n > 2$, $r_- < r_+$
and $\varphi$ be a positive smooth function on $[r_-, r_+]$. A
\text{{\itshape{warped product metric}}} is a metric of the form
\begin{equation}
  g_{0} = \mathrm{d} r^2 + \varphi (r)^2 h, \label{warped product of X}
\end{equation}
on some manifold $X$ of dimension $n - 1$ and $h$ is a Riemannian metric on
$X$. For simplicity, we also require that $\varphi (r_{\pm}) \neq 0$. By a direct
calculation, the scalar curvature of $g_{0}$ on $M := [r_-, r_+] \times X$ is
given by
\[ R_{g_{0}} = \frac{1}{\varphi^2} R_{h} -n(n-1)(\frac{\varphi'}{\varphi})^2-2(n-1)(\frac{\varphi'}{\varphi})',\]
with the boundary mean curvature of $\partial_{\pm} M := \{r_{\pm}\} \times X$ is given by
\[ H_{g_{0}} = \pm (n - 1) \varphi (r_{\pm})^{- 1} \varphi' (r_{\pm}) \text{ along } \partial_{\pm}M. \]
Concerning scalar curvature rigidity of $M$, there are various works, and we refer the readers to the 
\text{{\itshape{Four lectures on scalar curvature}}} of Gromov \cite{gromov-four-2021}
for a wealth of such results. In this paper, we are interested in the scalar curvature of rigidity for domains in warped products. In particular, we study Llarull theorem
\cite{llarull-sharp-1998} for domains in warped products and the
Gromov dihedral rigidity conjecture in hyperbolic space.

The Llarull theorem \cite{llarull-sharp-1998} (see also \cite{goette-scalar-2002}, \cite{listing-scalar-arxiv-2010}) asserts that a
metric on the sphere and the scalar curvature cannot be at the same time
bounded below by those of the standard sphere metric.
A distinct feature of Llarull theorem compared to the scalar curvature rigidity of torus (see \cite{schoen-existence-1979, gromov-positive-1983}) is the requirement of a metric comparison.
There are multiple proofs of this scalar curvature rigidity including \cite{llarull-sharp-1998}, \cite{hirsch-rigid-2022-arxiv}, \cite{hu-rigidity-2023}, \cite{bar-scalar-2024}, \cite{wang-scalar-arxiv-2023}, \cite{li-spectral-2024}. Some proofs of these papers apply to a warped product metric of a closed manifold of positive curvature and a closed interval. 

%  These respectively concern the scalar curvature rigidity of the sphere and
% the hyperbolic space. As is well known, the metrics of the standard sphere and
% the hyperbolic space can both be written in a warped product form.

Lott \cite{lott-index-2021} generalized Llarull theorem with \textit{convex} boundary which includes the effect of the mean curvature with subsequent developments by \cite{wang-dihedral-2023-arxiv}. To state our first result which is a Llarull theorem for domains in a warped product, we observe that every warped product in \eqref{warped product of X} is conformal to the
direct product metric
\begin{equation}
  \bar{g} = \mathrm{d} s^2 + g_X \label{direct product of X}
\end{equation}
where $s$ is a function of $r \in [r_-, r_+]$ given by
\[ s (r) = \int_{r_-}^{r_{+}} \varphi (t)^{- 1} \mathrm{d} t . \]
Indeed, since $s (r)$ is monotone with respect to $r$, $r$ can also be seen implicitly
as a function of $s \in [0, \int_{r_-}^{r_+} \varphi (t)^{- 1} \mathrm{d}
t]$. We set $\psi (s) = \varphi (r(s))$, and it is easy to check that $g_{0} =
\psi (s)^2 \bar{g}$.

Now our version of Llarull theorem with boundary takes the following form.

\begin{theorem}
  \label{general Llarull}
  Let $(X \times I, g_0 = dr^2 + \varphi(r)^2 h)$ be an $n$-dimensional warped product manifold such that the curvature operator of $(X, h)$ is non-negative and $\varphi$ is log-concave, that is,
  \begin{equation}
    (\log \varphi)'' \leq 0. \label{log concavity}
    \end{equation}
	Let $M$ be an $n$-dimensional compact manifold in $(X \times I, g_{0} = dr^2 + \varphi(r)^2 h) = (X \times I_s, \psi(s)^2 \bar{g})$ such that the boundary $\partial M$ is convex with respect to $\bar{g}$ and the Euler characteristic
 of $M$ is non-zero. 
	If $(N, {g})$ is a compact Riemannian manifold and $f: N \to M$ is a spin map such that
	\begin{enumerate}
		\item[(i)] $R_{{g}} \geq f^{\ast} R_{g_{0}}$ in $N$,
		\item[(ii)] $H_{{g}} \geq f^{\ast} H_{g_{0}}$ on $\partial N$,
		\item[(iii)] $f: (N, {g}) \to (M, g_{0})$ is distance non-increasing,
		\item[(iv)] the degree of $f$ is non-zero,
	\end{enumerate}
then the equalities in (i) and (ii) hold. Moreover, $f: (N, {g}) \to (M, g_{0})$ is a local isometry if $\varphi$ is strictly log-concave and $(X, h)$ is Ricci positive, and $\partial f=f|_{\p N}: (\partial N, {g}) \to (\partial M, g_{0})$ is a local isometry if $\partial M$ is strictly convex with respect to $g_0$.
\end{theorem}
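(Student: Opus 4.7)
The plan is a Llarull-type spinor argument, where the key new input is to exploit the conformal identification $g_0 = \psi(s)^2 \bar{g}$ in order to transport the Weitzenb\"ochck analysis to the direct product metric $\bar{g} = \mathrm{d} s^2 + g_X$, on which the boundary convexity hypothesis is phrased and on which the product structure furnishes distinguished parallel sections along the $X$-factor. I would build a twisting bundle $E \to M$ from the direct product structure (a pullback of $\Lambda^{\ast} T^{\ast} X$ or a chiral refinement thereof, combined with the spinor bundle along the $I$-direction) and form the twisted Dirac operator $D^E$ acting on $S_N \otimes f^{\ast} E$ over $N$.

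The existence step combines the two topological hypotheses: the non-zero Euler characteristic of $M$ forces $E$ to carry a non-trivial characteristic class built from the Euler form of $(X,h)$, while the non-zero degree of $f$ guarantees that its pairing with $[N]$ is non-zero. With a suitable elliptic boundary condition, the Atiyah-Singer / Atiyah-Patodi-Singer index formula then produces a non-trivial harmonic spinor $\phi \in \ker D^E$.

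The technical heart of the argument is a new connection on $S_N \otimes f^{\ast} E$ obtained from the Levi-Civita connections by adding a one-form proportional to $\mathrm{d}(\log \psi)$; this is precisely the ``new connection'' advertised in the abstract, and its curvature absorbs the conformal factor in the correct way. In the Weitzenb\"ock formula for $D^E$ associated to this modified connection, the curvature endomorphism splits into three pieces: the scalar term $\tfrac{1}{4} R_g$, a piece $\mathfrak{R}_X$ coming from the curvature operator of $(X,h)$, and a piece $\mathfrak{R}_\psi$ encoding the conformal correction. Using hypothesis (iii) that $f$ is distance non-increasing, the non-negativity of the curvature operator of $(X,h)$ gives $\mathfrak{R}_X \geq \tfrac{1}{4\varphi^2} f^{\ast} R_h$ on the relevant sections, while the log-concavity $(\log \varphi)'' \leq 0$ gives a lower bound on $\mathfrak{R}_\psi$ that absorbs the remaining pieces $-n(n-1)(\varphi'/\varphi)^2 - 2(n-1)(\varphi'/\varphi)'$ of $R_{g_0}$. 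Combined with hypothesis (i) this yields pointwise non-negativity of the bulk Weitzenb\"ock integrand.

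The boundary analysis is where I expect the main obstacle. After integration by parts, the Weitzenb\"ock identity produces a boundary integral of the form
\[ \tfrac{1}{2}\int_{\partial N}\bigl(H_g\,|\phi|^2 - \langle A_g\,\phi,\phi\rangle\bigr) + \text{conformal correction}, \]
and I would select a chiral/MIT-bag-type boundary condition compatible both with $D^E$ and with the conformal modification so that (a) the boundary value problem remains Fredholm with the correct index, and (b) the conformal correction converts the $\bar{g}$-convexity of $\partial M$ into a pointwise bound on the shape-operator term by $f^{\ast} H_{g_0}$. Hypothesis (ii) then forces the boundary integrand to be non-negative. The existence of $\phi$ now collapses both inequalities in (i) and (ii) to equalities, with $\phi$ parallel for the modified connection. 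Under strict log-concavity together with Ricci positivity of $(X,h)$, the parallel spinor system is sufficiently rigid to force $f$ to be a local isometry in the interior, and a parallel argument on $\partial N$, under strict $g_0$-convexity of $\partial M$, upgrades $\partial f$ to a local isometry. The delicate point will be choosing the chiral boundary projector in alignment with the conformal modification so that the index calculation and the sign of the boundary integrand both work out simultaneously.
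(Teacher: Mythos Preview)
Your outline is correct and matches the paper's approach in all essential points: the twisting bundle is $f^{\ast}S_M$ with the $\bar g$-Levi-Civita connection, the modification of the connection is the zeroth-order Clifford term $\tfrac12 f^{\ast}\bigl(\tfrac{\psi'}{\psi^2}\bigr)(\epsilon\otimes\bar\epsilon)(c(e_i)\otimes\bar c(\partial_s))$ (so exactly your ``one-form proportional to $\mathrm d(\log\psi)$'' in the $r$-variable), the boundary condition is the chirality-matching projector $(\epsilon\otimes\bar\epsilon)(c(e_n)\otimes\bar c(\bar e_n))\sigma=-\sigma$, and the index is $\deg(f)\cdot\chi(M)$. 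The boundary step you flag as the obstacle is handled exactly as you guess: because the twisting connection is the $\bar g$-connection, the $\mathcal A$-term sees the $\bar g$-second fundamental form, and the extra $\tfrac{n-1}{n}\langle c(e_n)\Psi\sigma,\sigma\rangle$ coming from the modification supplies precisely the conformal correction $H_{g_0}=\psi^{-1}H_{\bar g}-(n-1)\psi^{-2}\bar e_n(\psi)$.
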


In principle, Theorem \ref{general Llarull} could be
generalized to manifolds with polyhedral boundary as well. These generalizations should be
straightforward using
the machinery of index theory for manifolds with corners
in \cite{wang-gromovs-2022-arxiv}, \cite{wang-dihedral-2023-arxiv}. Hence, we only consider smooth domains.
A related result that could be compared to ours was given by Wang-Xie \cite{wang-dihedral-2023-arxiv}.
They considered the
scalar curvature rigidity of
radially convex domains (see \cite[Definition 1.1]{wang-dihedral-2023-arxiv}) with polyhedral boundary in a warped product. 

A warped product of particular interest is given by
\[ \mathrm{d} r^2 + \varphi (r)^2 (\mathrm{d} t^2 + \phi (t)^2 g_Y) \]
on $[r_-, r_+] \times [0, t_0] \times Y$ where $X= [0,t_{0}]\times Y$ and its metric $h= \mathrm{d}t^2 + \phi(t)^2 g_{Y}$. We also require that the metric
$\mathrm{d} t^2 + \phi (t)^2 g_Y$ on $[0, t_0] \times Y$ is complete at $t
= 0$, the curvature operator of $\mathrm{d} t^2 + \phi (t)^2 g_Y$ is
positive and $\phi' / \phi > 0$ for all $t \in (0, t_0)$. We take a
domain $\Omega$ in $[r_-, r_+] \times [0, t_0] \times Y$ given by
\[ \Omega = \cup_{r \in [r_-, r_+]} \Sigma_r : = \cup_{r \in [r_-, r_+]} \{(r,
   t, y) \in [r_-, r_+] \times [0, t_0] \times Y : \text{ } t \leq \tau 
   (r)\} \]
for some positive function $\tau$ on $[r_-, r_+]$. This domain $\Omega$ is \textit{rotationally symmetric} with respect to the $r$ direction. Let
\[ \partial_s \Omega = \cup_{r \in [r_-, r_+]} \{(r, t, y) \in [r_-, r_+]
   \times [0, t_0] \times Y : \text{ } t = \tau (r)\} . \]
 It is easy to check that the inward unit normal of $\partial_s \Omega$ is
 \[\nu = (\rho' \varphi^2 \partial_r - \partial_t) \varphi^{-1} ((\rho'\varphi)^2+1)^{-1/2}.\]
 The dihedral angles $\gamma$ formed
by $\partial_s \Omega$ and $\Sigma_r$ is then
\[ \cos \gamma = - \langle \nu, \partial_r \rangle = - \frac{\rho'
   \varphi}{\sqrt{(\rho' \varphi)^2 + 1}} . \]
Note that the angle is conformally invariant and it is easy to check that the condition $\partial_s \Omega$
is convex with respect to $\bar{g} = \mathrm{d} r^2 + \mathrm{d} t^2 + \phi
(t)^2 g_Y$ is equivalent to that $\gamma$ (or $\rho' \varphi$) is decreasing with
respect to $r$ and the condition $\phi'/\phi>0$ which we have already assumed. In fact, the condition
\begin{equation}
  \gamma'(r)<0 \label{boundary analog}
  \end{equation}
is the \textit{boundary analog of logarithmic concavity \eqref{log concavity}}. The condition \eqref{boundary analog} answers
  a problem raised by Gromov in the settings of capillary surfaces (see \cite[Section 5.8.1]{gromov-four-2021}). \footnote{In a work in preparation, the first author with Gaoming Wang also found this condition in the settings of capillary surfaces.} See Figure \ref{rot picture} for an illustration with $\varphi(r)=r^2$, $\phi(t)=\sin t$ which is essentially the flat metric of $\mathbb{R}^3$ with a radial foliation.
\begin{figure}
\centering
\includegraphics{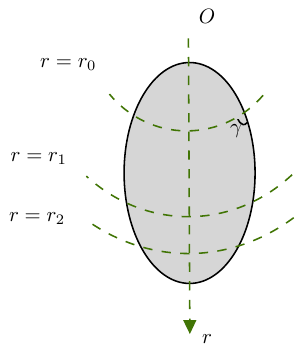}
\caption{A rotationally symmetric domain with a radial foliation in $\mathbb{R}^3$.}
\label{rot picture}
\end{figure}

As another special case, we have the following Llarull type rigidity in hyperbolic
case, that is, when $X =\mathbb{R}^{n - 1}$, $g_X$ is the standard flat
metric, $\psi (s) = s^{- 1}$ and the map $f$ is the
identity.
This is the Poincar{\'e} half-space model of the hyperbolic $n$-space, which we now recall.
The model is given by the
metric
\[ b = \tfrac{1}{(x^1)^2} ((\mathrm{d} x^1)^2 + \cdots + (\mathrm{d} x^n)^2)
\]
on $\mathbb{R}_+^n = \{(x^1, \ldots, x^n) : x^1 > 0\}$.
By
convention, we use $\delta$ to denote the flat metric on $\mathbb{R}^n_+$.

\begin{theorem}\label{hyperbolic Llarull}
Let $\Omega$ be a compact, strictly convex, smooth domain in $(\mb{R}^n_+,\delta)$. Let $b=\frac{1}{(x^1)^2}\delta$ be the hyperbolic metric on $\mb{R}^n_+$. Suppose $g$ is a Riemannian metric on $\Omega$ which satisfies
\begin{itemize}
\item[(1)] The scalar curvature $R_g \geq -n(n-1)$;
\item[(2)] The mean curvature on the boundary $H_g \geq H_b$;
\item[(3)] The induced metrics $\left.g\right|_{\partial \Omega} \geq \left.b\right|_{\partial \Omega}$.
\end{itemize}
Then $(\Omega, g)$ is hyperbolic and the induced metrics of $g$ and $b$ on $\partial \Omega$ agree.
\end{theorem}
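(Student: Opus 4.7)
The plan is to deduce Theorem \ref{hyperbolic Llarull} from Theorem \ref{general Llarull} applied to the identity map on $\Omega$, after realizing the Poincar\'e half-space as a warped product. Setting $r = -\log x^1$, one has
\[ b = \mathrm{d}r^2 + e^{2r}\,\delta_{\mathbb{R}^{n-1}} = \psi(s)^2 \bar g,\qquad \psi(s) = 1/s,\ s = x^1,\ \bar g = \delta, \]
so that $\varphi(r) = e^r$ and the cross-section is $(X,h) = (\mathbb{R}^{n-1},\delta)$. The base is flat, hence has non-negative curvature operator; $\varphi$ is log-concave since $(\log\varphi)'' = 0$; and because $\Omega$ is a compact Euclidean convex body, it is contractible so $\chi(\Omega) = 1 \neq 0$, and $\partial\Omega$ is convex with respect to $\bar g = \delta$.

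Taking $N = M = \Omega$ and $f = \mathrm{id}$, hypotheses (i), (ii), (iv) of Theorem \ref{general Llarull} follow at once from (1), (2), and $\deg(\mathrm{id}) = 1$. The main obstacle is hypothesis (iii): distance non-increasing for the identity map is the global pointwise tensorial inequality $g \geq b$ on $\Omega$, whereas Theorem \ref{hyperbolic Llarull} assumes only the boundary comparison (3). I would address this by revisiting the twisted Dirac argument underlying Theorem \ref{general Llarull} in the special case $f = \mathrm{id}$. When the twisting bundle is built from the imaginary Killing spinors of $b$ itself, the twist curvature is expressed purely in terms of $b$ and does not invoke any pointwise comparison of $g$ with $b$. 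With an APS-type boundary condition adapted to $\partial\Omega$, the Lichnerowicz identity for a spinor $\sigma$ in the kernel of the Dirac operator should take the form
\[ 0 = \int_\Omega |\nabla\sigma|^2 + \tfrac14 \int_\Omega (R_g + n(n-1))|\sigma|^2 + \int_{\partial\Omega} \mathcal{B}(\sigma), \]
where the bulk integrand is non-negative by (1), and the boundary integrand $\mathcal{B}(\sigma)$ can be arranged so that (2) and (3) together force $\mathcal{B}(\sigma) \geq 0$. This is the point where the boundary comparison (3) substitutes for the missing global comparison in (iii).

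Feeding this into a full family of spinors yields $R_g = -n(n-1)$, $H_g = H_b$, and an $n$-dimensional family of harmonic spinors parallel for the modified connection, i.e., imaginary Killing spinors on $(\Omega,g)$. The presence of such a maximal family forces the full Riemann tensor of $g$ to coincide with that of $b$, so $(\Omega,g)$ is hyperbolic; equality of the induced boundary metrics then follows from $H_g = H_b$, $g|_{\partial\Omega} \geq b|_{\partial\Omega}$, and strict Euclidean convexity of $\partial\Omega$ via a Gauss-type argument at the boundary. The hardest step I anticipate is the precise design of the APS boundary condition making $\mathcal{B}(\sigma)$ a non-negative combination of $H_g - H_b$ and $g|_{\partial\Omega} - b|_{\partial\Omega}$, since it is exactly this flexibility that allows the proof to go through without the global distance non-increasing hypothesis of Theorem \ref{general Llarull}.
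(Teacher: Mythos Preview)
Your overall strategy is the paper's: specialize the twisted Dirac/Schr\"odinger--Lichnerowicz machinery of Theorem~\ref{general Llarull} to $f=\mathrm{id}$, note that the interior comparison (iii) becomes unnecessary in the hyperbolic case, and then upgrade $\hat\nabla\sigma=0$ to the full curvature identity. Two points deserve sharpening, and one is a genuine gap.

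\textbf{Why (iii) is not needed in the interior.} Your phrasing ``the twist curvature is expressed purely in terms of $b$'' is close but does not name the mechanism. In Proposition~\ref{sl formula} the interior uses the distance non-increasing hypothesis in exactly two places: the estimate \eqref{listing} for $\mathcal R$ and the inequality \eqref{eqn-1} for the gradient term. Here the cross-section $(\mathbb R^{n-1},\delta)$ is \emph{flat}, so $\bar R=0$ and $\mathcal R=R_g/4$ on the nose; and $\psi'/\psi^2\equiv -1$ is \emph{constant}, so the gradient in \eqref{eqn-1} vanishes identically. These two accidents are what make the interior contribution collapse to $\tfrac14(R_g+n(n-1))|\sigma|^2$ without any comparison. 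The boundary term still needs (3), via Lemma~\ref{lemma1}(b).

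\textbf{Boundary condition.} The condition used is the \emph{local} chirality condition $B$ of Definition~\ref{condition B}, i.e.\ $(\epsilon\otimes\bar\epsilon)(c(e_n)\otimes\bar c(\bar e_n))\sigma=-\sigma$, not an APS-type global condition. This matters for the index computation and for the later algebraic manipulations at the boundary.

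\textbf{The genuine gap.} Index theory gives a single nontrivial section $\sigma$ of $S_{\Omega_g}\otimes S_{\Omega_\delta}$; decomposing $\sigma=\sum_\alpha s_\alpha\otimes\bar s_\alpha$ produces $m=2^{n/2}$ components (not $n$). The step ``the presence of such a maximal family forces the full Riemann tensor of $g$ to coincide with that of $b$'' only works once you know the $s_\alpha$ are \emph{linearly independent}, so that the Clifford element $\sum_{i,j}(R_{ijkl}+\delta_{ik}\delta_{jl}-\delta_{il}\delta_{jk})c(e_i)c(e_j)$ annihilates a basis of the spinor module and hence vanishes. This linear independence is not automatic: the paper proves it as Proposition~\ref{full independence} via Lemmas~\ref{L vanishes} and~\ref{cross orthogonal}, and the argument genuinely exploits (a) the local boundary condition $\epsilon c(\nu)\omega_N s=-s$, and (b) strict convexity of $\partial\Omega$ to guarantee that the Euclidean normals span $\mathbb R^n$ and that some boundary point has $N=N_0$. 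Your proposal assumes the maximality without indicating how to obtain it; this is the step you should fill in.
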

This theorem answers a question of Gromov  (see \cite[Section 3.5]{gromov-four-2021}; On Non-spin Manifolds and on $\sigma<0$) concerning the scalar curvature rigidity of geodesic balls in hyperbolic space. 
The reason that we do not require in the hyperbolic case that $g
\geq b$ (equivalently, the identity map is distance non-increasing) is
that the hyperbolic space is conformal to the Euclidean space which is
flat.  We take the idea further by considering $\partial M$ being totally
geodesic with respect to the flat metric. We can then remove the third
condition on induced metrics on the boundary, however, we have to then consider a polytope. This is the
hyperbolic version of the Gromov dihedral rigidity conjecture.

We are interested in the scalar curvature rigidity of
polytopes enclosed by linear hyperplanes, which are umbilic in the Poincar\'{e} half-space model.
Let $\Omega$ be a compact, convex polytope in $(\mathbb{R}^n_+, \delta)$ with
non-empty interior, we may write $\Omega = \cap_{\ell\in\Lambda } \{u_\ell \leq
0\}$, where $u_\ell$, $\ell \in \Lambda$ is a finite collection of non-constant linear
functions defined on $\mathbb{R}^n_+$. For each $\ell \in \Lambda$, we denote by $N_\ell
\in \mathbb{S}^{n - 1}$ the outward-pointing unit normal vector to the half-space $\{u_\ell \leq 0\}$ with respect to the Euclidean metric.

Let $g$ be another Riemannian metric which is defined on an open set containing
$\Omega$. For each $\ell \in \Lambda$, we denote by $\nu_\ell$ the outward unit normal
vector to the half-space $\{u_\ell \leq 0\}$ with respect to the metric $g$.
For adjacent faces $F_{\ell_{i}}, F_{\ell_{j}} \subset \Omega$, we call the angle $\gamma_{i j}
\in (0, \pi)$ with $\cos \gamma_{i j} = - \cos \langle \nu_{\ell_{i}}, \nu_{\ell_{j}} \rangle$ the
dihedral angle. We add a bar to $\gamma$ to indicate that the angle is
computed with respect to the flat metric.

\begin{theorem}
  \label{intersect rigidity}Let $\Omega$ be
  a convex polytope in $(\mathbb{R}^n_+,\delta)$,  $g$ be another Riemannian metric on $\Omega$ with
  scalar curvature $R_g \geq - n (n - 1)$ and each face $F_{\ell}$ and $\ell
  \in \Lambda$ has mean curvature $H_{\ell} \geq (n - 1) \langle
  \tfrac{\partial}{\partial x^1}, N_{\ell} \rangle_{\delta}$. 
If $p$ is a point in $\partial \Omega$ and $\ell_1, \ell_2 \in \Lambda$
satisfying $u_{\ell_1} (p) = u_{\ell_2} (p) = 0$, then $g (\nu_{\ell_1},
\nu_{\ell_2}) = \langle N_{\ell_1}, N_{\ell_2} \rangle_{\delta}$ at the point
$p$ (matching angle).
  Then $(\Omega, g)$ is
  hyperbolic with umbilic faces $F_{\ell}$ whose mean curvatures are given by
  $(n - 1) \langle \tfrac{\partial}{\partial x^1}, N_{\ell} \rangle_{\delta}$. Moreover, $(\Omega,g)$ lies in a Poincar\'{e}
  half-space model.
\end{theorem}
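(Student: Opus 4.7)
The plan is to extend the spin/Dirac operator argument underlying Theorem~\ref{hyperbolic Llarull} to the polyhedral setting, combining it with corner index theory in the style of Wang--Xie \cite{wang-gromovs-2022-arxiv, wang-dihedral-2023-arxiv}; the matching angle hypothesis is exactly what renders the corners tractable.

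The first step is to translate the hypotheses into comparisons with the hyperbolic background $b=(x^1)^{-2}\delta$. A direct conformal computation shows that each linear face $F_\ell$ is totally umbilic in $(\mathbb{R}^n_+,b)$ with mean curvature exactly $(n-1)\langle\partial_{x^1},N_\ell\rangle_\delta$, and since angles are conformally invariant, the $b$-dihedral angles of $\Omega$ coincide with its Euclidean ones. The hypotheses of Theorem~\ref{intersect rigidity} therefore read $R_g\geq R_b$, $H_g|_{F_\ell}\geq H_b|_{F_\ell}$, and that all $g$-dihedral angles equal $b$-dihedral angles along every edge.

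Next, I would run the spinor argument with the conformal factor $\psi(x^1)=(x^1)^{-1}$, using the twisted spinor bundle and modified Dirac operator $\widetilde D$ developed in the body of the paper. Its Weitzenb\"ock--Schr\"odinger identity has bulk zeroth-order term bounded below by $\tfrac14(R_g+n(n-1))\geq 0$. On each face $F_\ell$ I impose a Clifford-multiplication boundary condition $\mathcal P_\ell\phi=0$ built from $\nu_\ell$ and $\psi$; its contribution to the boundary integral is proportional to $H_\ell-(n-1)\langle\partial_{x^1},N_\ell\rangle_\delta\geq 0$. On an edge $F_{\ell_1}\cap F_{\ell_2}$, the algebraic compatibility of the two projections depends only on the Clifford anticommutator $\nu_{\ell_1}\cdot\nu_{\ell_2}+\nu_{\ell_2}\cdot\nu_{\ell_1}=-2g(\nu_{\ell_1},\nu_{\ell_2})$; the matching angle hypothesis reduces this to the Euclidean relation $-2\langle N_{\ell_1},N_{\ell_2}\rangle_\delta$, so $\mathcal P_{\ell_1}$ and $\mathcal P_{\ell_2}$ behave exactly as in the flat model, and the Wang--Xie corner analysis produces a non-trivial $\phi\in\ker\widetilde D$ satisfying every face boundary condition.

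Forcing equality throughout the Weitzenb\"ock identity then gives $R_g\equiv -n(n-1)$, each face umbilic with the prescribed mean curvature, and $\widetilde\nabla\phi\equiv 0$. After unpacking the conformal twist, the $\widetilde\nabla$-parallel spinor becomes an imaginary Killing spinor on $(\Omega,g)$, whose existence forces the sectional curvature to be $-1$. Hence $(\Omega,g)$ is a hyperbolic polytope with the same combinatorics and dihedral angles as $\Omega\subset(\mathbb{R}^n_+,b)$, and the developing map embeds it isometrically inside a Poincar\'e half-space model. The main obstacle I anticipate is analytic rather than geometric: adapting the Wang--Xie corner Fredholm theory, which was set up for the untwisted Dirac operator, to the twisted operator $\widetilde D$. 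Since the local model near an edge is determined by $g(\nu_{\ell_1},\nu_{\ell_2})$, which the matching angle hypothesis pins to the flat value, this adaptation should be essentially formal.
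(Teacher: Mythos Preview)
Your overall architecture is reasonable and your reading of the hypotheses is correct, but there is a genuine gap at the rigidity step, and the analytic setup differs from the paper's in a way worth flagging.

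First, the analytic framework. The paper does \emph{not} run a corner index theory \`a la Wang--Xie on the polytope directly. Instead it uses Brendle's smoothing $\Omega_\lambda=\{\sum_\ell e^{\lambda u_\ell}\le 1\}$, solves $\hat D\sigma^{(\lambda)}=0$ with the local boundary condition on each smooth $\Omega_\lambda$, and passes to a subsequential limit; the matching-angle hypothesis is what makes the boundary term in the Schr\"odinger--Lichnerowicz inequality uniformly controlled so that the limit section is nonzero and $\hat\nabla$-parallel. Your Wang--Xie route might also succeed, but it is a different argument, and the ``essentially formal'' adaptation you anticipate is not carried out anywhere.

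The real gap is the sentence ``an imaginary Killing spinor on $(\Omega,g)$, whose existence forces the sectional curvature to be $-1$.'' A single imaginary Killing spinor only forces the metric to be Einstein with $\mathrm{Ric}=-(n-1)g$; it does \emph{not} pin down the full curvature tensor. What the paper extracts from $\hat\nabla\sigma=0$ is an $m$-tuple $s=(s_1,\dots,s_m)$, $m=2^{\lfloor n/2\rfloor}$, of spinors each satisfying $\nabla_\xi s_\alpha\mp\tfrac12\epsilon\, c(\xi)s_\alpha=0$, and then spends most of Section~\ref{sec:dihedral} proving that these components are \emph{linearly independent} at every point (Lemma~\ref{L vanishes}, Proposition~\ref{lm:goal}, Proposition~\ref{scalar identity}). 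Only with a full frame of Killing spinors can one conclude that the Clifford element $\sum_{i,j}(R_{ijkl}+\delta_{ik}\delta_{jl}-\delta_{il}\delta_{jk})c(e_i)c(e_j)$ acts as zero on all of $S_{\Omega_g}$, and hence vanishes in the Clifford algebra, giving constant curvature $-1$. In the polytope case this independence argument is not a repetition of the smooth-boundary case: the key Lemma~\ref{cross orthogonal} used a boundary point with normal $\pm N_0$, which a polytope need not have, and the paper replaces it by a geometric argument at the highest vertex (Proposition~\ref{lm:goal}). Your proposal skips this entire step.

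Finally, ``the developing map embeds it isometrically inside a Poincar\'e half-space model'' is too quick. Knowing $(\Omega,g)$ is hyperbolic with umbilic faces leaves open whether the faces sit on round spheres or on linear hyperplanes in some half-space model. The paper resolves this by showing each $s_\alpha$ is of \emph{type I} (again via Proposition~\ref{lm:goal}), so that $V=|s_\alpha|^2$ satisfies $|\nabla V|=V$ and its level sets are intrinsically flat (Lemma~\ref{type I implies parallel}); this produces the $x^1$-coordinate, and the boundary computation of Lemma~\ref{hessian and boundary derivative} then rules out spherical faces.
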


This theorem is a special case of Gromov's dihedral rigidity
conjecture \cite{gromov-dirac-2014} in hyperbolic space. Gromov
\cite{gromov-dirac-2014} raised the dihedral rigidity conjecture
to characterize the manifold with weaker notions of scalar curvature
lower bound, and he proved the Euclidean version of the
conjecture for cubes, later Li \cite{li-polyhedron-2020} first
proved for some special polyhedra (prisms) other than cubes in
dimension 3. Li's method is by capillary minimal surface.
We have also seen
several developments using spinors \cite{wang-gromovs-2022-arxiv},
\cite{wang-dihedral-2023-arxiv}, \cite{brendle-scalar-2024},
and \cite{brendle-gromovs-arxiv-2023}.

In the hyperbolic case, the dihedral rigidity originally was only raised
for parabolic cubes in \cite{gromov-dirac-2014}, the current formulation of Theorem \ref{intersect
rigidity} is inspired by the evaluation of hyperbolic mass
integrals on exhausting polytopes in the Poincar\'{e} half-space model
of an asymptotically hyperbolic manifold (see
\cite{jang-hyperbolic-2021} and \cite[Appendix B]{chai-dihedral-2024}).
Li {\cite{li-dihedral-2020-1}} showed the dihedral rigidity conjecture
for parabolic cubes up to dimension seven using free boundary constant
mean curvature hypersurfaces. Using capillary constant mean curvature
surfaces, the first named author and G. Wang
{\cite{chai-dihedral-2024}} proved three-dimensional dihedral rigidity
for certain prisms similar to the ones considered in
{\cite{li-polyhedron-2020}} and tetrahedra with a base face or a top
face which generalizes Li's approach
{\cite{li-dihedral-2020-1}}. Using spacetime harmonic functions, Tsang
{\cite{tsang-dihedral-2021-arxiv}} studied the dihedral rigidity for
three-dimensional cubical initial data sets which include parabolic
cubes. Wang-Xie {\cite{wang-dihedral-2023-arxiv}} used spinor methods
for polyhedral domains in hyperbolic space which are radially convex
and has a top face, in particular, for parabolic cubes.

% This theorem is a generalization of Gromov's hyperbolic subrectangular theorem \cite{gromov-scalar-2019-arxiv} due to \cite{chai-dihedral-2024} (see also a previous \textit{hyperbolic prism inequality} in \cite{gromov-dirac-2014} and proofs of some special cases \cite{li-dihedral-2020-1}, \cite{tsang-dihedral-2021-arxiv}, \cite{chai-dihedral-2024}).

Our proof uses the smoothing procedure of Brendle \cite{brendle-scalar-2024}, therefore shares the same restrictions of matching angles. Our new contribution to the problem is mainly the development of a new Dirac operator
and properties of the solutions of the related Dirac type equation (see Section \ref{sec:dihedral}).
Also, one should be able to obtain alternative versions of Theorem \ref{intersect rigidity} following a different smoothing procedure of \cite{brendle-gromovs-arxiv-2023} or using index theory on manifolds with corners in \cite{wang-gromovs-2022-arxiv}.

\

The article is organized as follows:

\

In Section \ref{sec:new dirac}, we consider a new twisted
spinor bundle and connection, and establish a corresponding integrated form of
the Schrodinger-Lichnerowicz formula. In Section
\ref{sec:llarull-boundary}, we prove the general version of Llarull
theorem (Theorem \ref{general Llarull}), and we also discuss 
Llarull theorem with boundary 
in the hyperbolic space in Subsection \ref{sec:llarull}. In Section \ref{sec:dihedral}, we focus on the proof of Theorem \ref{intersect rigidity}. In Section \ref{odd D}, we give a brief account of how to prove Theorems \ref{hyperbolic Llarull} and \ref{intersect rigidity} in odd dimensions.

\

\text{{\bfseries{Acknowledgements}}}. X. Chai was supported by the National Research Foundation of Korea (NRF) grant
funded by the Korea government (MSIT) (No. RS-2024-00337418) and an NRF grant No. 2022R1C1C1013511.
X. Wan was supported by the National Natural Science Foundation of China (Grant No. 12101093) and the Natural Science Foundation of Chongqing (Grant No. CSTB2022NSCQ-JQX0008), the Scientific Research Foundation of the Chongqing University of Technology.

\section{A new Dirac operator}\label{sec:new dirac}

In this section, we introduce a new connection \eqref{modified conn} and a new Dirac operator \eqref{modified dirac}. We compute the integral form of the related Schrodinger-Lichnerowicz formula in Proposition \ref{Dirac Witten}.

Consider $(X,h)$, an $(n-1)$-dimensional Riemannian manifold, and $X \times I$ equipped with a warped product metric $g_{0}$ of the form
\begin{equation}
  {g_{0}} = dr^2 + \varphi(r)^2 h,
\end{equation}
where $r$ is the parameter for the interval $I = [a,b]$ and $\varphi$ is a positive smooth function on $I$. Let $M$ be an $n$-dimensional compact manifold in $X \times I$.

Let $(N,{g})$ be a compact Riemannian manifold and $f: N \to M$ a spin map, meaning $TN \oplus f^{\ast}TM$ admits a spin structure. Denote the associated spinor bundle over $N$ by $S_N \otimes f^{\ast}S_M := S_{TN \oplus f^{\ast}TM}$, where $S_N$ and $S_M$ are the local spinor bundles over $N$ and $M$, respectively. Assuming $N$ and $M$ have the same dimension, the bundle $TN \oplus f^{\ast}TM$ has an even rank, and its spinor bundle carries a natural $\mathbb{Z}_2$-grading. If $\dim M$ is even, the $\mathbb{Z}_2$-grading of $S_N \otimes f^{\ast}S_M$ is given by ${\epsilon} \otimes \b\epsilon$, where ${\epsilon}$ and $\b\epsilon$ are the grading operators of $S_N$ and $f^{\ast}S_M$, respectively. For odd $\dim M$, computations are analogous to the even-dimensional case. Thus, we assume $\dim M$ is even.

We define
\begin{equation}\label{eqn-s}
  s(r) = \int_a^r \frac{1}{\varphi(t)}dt, \quad r \in [a,b]
\end{equation}
and 
\begin{equation}
  \psi(s) = \varphi(r(s)),\quad s \in I_s = \left[0, \int_a^b \frac{1}{\varphi(t)} dt \right].
\end{equation}
Then 
\begin{equation}
  {g_{0}} = dr^2 + \varphi(r)^2 h = \psi(s)^2 (ds^2 + h) = \psi(s)^2 \bar{g},
\end{equation}
where $\bar{g} := ds^2 + h$ is a direct product metric on $X \times I_s$. Recall that $M$ is a manifold in $X \times I$, which can also be viewed as a manifold in $X \times I_s$ by the bijective map \eqref{eqn-s}. Let $\nabla^{M}$ be the Levi-Civita connection on $(M,\bar{g})$, and $\nabla^{N}$ the Levi-Civita connection of $(N,{g})$. We also use only $\nabla$ to denote the Levi-Civita connection of $(N,g)$ for brevity
if there was no confusion. We denote the associated connections on $S_M$ and $S_N$ by $\nabla^{S_M}$ and $\nabla^{S_N}$, respectively. Then
\begin{equation}
  \nabla = \nabla^{S_N} \otimes 1 + 1 \otimes f^{\ast} \nabla^{S_M}
\end{equation}
is the natural connection on the bundle $S_N \otimes f^{\ast}S_M$. We denote the Clifford multiplication of a vector ${v} \in TN$ by ${c}({v})$ and the Clifford multiplication of a vector $\bar{v} \in f^{\ast}TM$ by $\bar{c}(\bar{v})$. The Dirac operator $D$ associated with $\nabla$ is given by
\begin{equation}
  D = \sum_{i=1}^n {c}({e}_i)\nabla_{{e}_i},
\end{equation}
where $\{{e}_i\}_{1 \leq i \le n}$ is the local orthonormal frame of $(TN,{g})$. For any $\bar{e}_i \in TM$, $\nabla^{M}_{\bar{e}_i} \partial_s = 0$, and thus $\nabla^{S_M}_{\bar{e}_i} c(\partial_s) = c(\partial_s) \nabla^{S_M}_{\bar{e}_i}$. Let ${e}_n$ (resp. $\bar{e}_n$) be the unit inner normal vector field of $\partial N$ (resp. $\partial M$). We define the boundary Clifford actions by
\begin{equation}
  \b{c}_\partial(\b{e}_\lambda) = \b{c}(\b{e}_n) \b{c}(\b{e}_\lambda),\quad c_\partial(e_\lambda) = c(e_n) c(e_\lambda)
\end{equation}
for $\b{e}_\lambda \in TM$ and $e_\lambda \in TN$. The boundary connections are defined by
\begin{equation}
  \nabla^{S_N,\partial}_{{e}_j} = \nabla^{S_N}_{{e}_j} + \frac{1}{2} {c}(\nabla^{N}_{{e}_j} {e}_n) {c}({e}_n)
\end{equation}
\begin{equation}
  \nabla^{S_M,\partial}_{\bar{e}_j} = \nabla^{S_M}_{\bar{e}_j} + \frac{1}{2} \bar{c}(\nabla^{M}_{\bar{e}_j} \bar{e}_n) \bar{c}(\bar{e}_n)
\end{equation}
and 
\begin{equation}
  \nabla^\partial = \nabla^{S_N,\partial} \otimes 1 + 1 \otimes f^{\ast}(\nabla^{S_M,\partial}).
\end{equation}
The boundary Dirac operator $D^\partial$ is defined by
\begin{equation}
  D^\partial = \sum_{j=1}^{n-1} {c}_\partial({e}_j) \nabla^\partial_{{e}_j}. \label{boundary dirac operator}
\end{equation}

We define a modified connection $\hat{\nabla}$ on $S_N \otimes f^{\ast} S_M$
by
\begin{equation}
  \hat{\nabla}_{e_i} \sigma = \nabla_{e_i} \sigma + \tfrac{1}{2} f^{\ast}
  \left( \tfrac{\psi'}{\psi^2} \right) (\epsilon \otimes \bar{\epsilon})
  \cdot (c (e_i) \otimes \bar{c} (\partial_s)) \label{modified conn}
\end{equation}
and its associated Dirac operator
\begin{equation}
  \hat{D} = c (e_i) \hat{\nabla}_{e_i} = D + \Psi \label{modified dirac}
\end{equation}
where
\[ \Psi = \tfrac{n}{2} f^{\ast} \left( \tfrac{\psi'}{\psi^2} \right)
   (\epsilon \otimes \bar{\epsilon}) \bar{c} (\partial_s), \]
where it should be clear that we write $\bar{c} (\partial_s)$ as $1 \otimes
\bar{c} (\partial_s)$ for simplicity. We also introduce a bundle map $\chi$ on sections of $S_N \otimes f^{\ast}
S_M$ restricted on the boundary
\begin{equation}
  \chi \sigma = (\epsilon \otimes \bar{\epsilon}) (c(e_n) \otimes
  \bar{c}(\bar{e}_n)) \sigma. \label{chi with vol}
\end{equation}
We recall the following local boundary condition $B$ from \cite[Definition 3.1]{wang-gromovs-2022-arxiv}. 
\begin{defn} \label{condition B}
A section $\sigma$ of $S_N \otimes f^{\ast} S_M$ over $N$ is said to satisfy the local boundary condition $B$ if 
\begin{equation}
  ({\epsilon} \otimes \bar{\epsilon})({c}({e}_n) \otimes \bar{c}(\bar{e}_n)) \sigma = -\sigma
\end{equation}
on $\partial N$. 
\end{defn}

\subsection{Schrodinger-Lichnerowicz formula}
We present the integrated Schrodinger-Lichnerowicz formula of \eqref{modified conn}.
\begin{proposition} \label{Dirac Witten}
  Let $\sigma \in S_N \otimes f^{\ast} S_M$, then
  \begin{equation}\label{boundary dirac chi anti}
    D^{\partial} \chi + \chi D^{\partial} = 0,
  \end{equation}
  (that is, $D^{\partial}$ and $\chi$ anti-commute) and
\begin{align}\label{eqn-D}
& \int_N | \hat{D} \sigma |^2 \\
  = & \int_N | \hat{\nabla} \sigma |^2  
      + \int_N \langle \mathcal{R} \sigma, \sigma \rangle  \\
    & \quad+\int_{N}  \tfrac{n - 1}{2} \langle c (\nabla  (f^{\ast} (\tfrac{\psi'}{\psi^2})) (\epsilon \otimes \bar{\epsilon}) \bar{c} (\partial_s) \sigma, \sigma \rangle + \tfrac{n (n - 1)}{4} f^{\ast} (\tfrac{\psi'}{\psi^2})^2 | \sigma |^2  \\
      & \quad + \int_{\partial N}
     \tfrac{1}{4} \langle D^{\partial} (\sigma + \chi \sigma), \sigma - \chi
\sigma \rangle + \tfrac{1}{4} \langle D^{\partial} (\sigma - \chi \sigma),
      \sigma + \chi \sigma \rangle \\
     &  \quad+  \int_{\partial N} \langle \mathcal{A} \sigma,\sigma \rangle + \tfrac{n - 1}{n}  \langle c (e_n) \Psi \sigma, \sigma \rangle,
 \end{align}
where 
\begin{equation}
\mathcal{R} = \frac{R_{{g}}}{4} - \frac{1}{2} \sum_{i,j} \left\langle \bar{R} f_* {w}_j, \bar{w}_i \right\rangle {c}({w}_j) \otimes \bar{c}(\bar{w}_i), \label{R}
\end{equation}
and
\begin{equation}\label{A}
  \mathcal{A}:= \tfrac{1}{2} H_g - \tfrac{1}{2} \sum_{1 \leq i \leq n-1} c (e_n) c
  (e_i) \bar{c} (\nabla^{T M}_{f_{\ast} e_i} \bar{e}_n) c (\bar{e}_n).
  \end{equation}
  Here, $\{e_i\}$ (resp. $\{\bar{e}_i\}$) is a local orthonormal frame of $T N$, $w_i \in \wedge^2 T N $ (resp. $\bar{w}_i \in \wedge^2 TM$) are of the form $e_j\wedge e_{k}$ (resp. $\bar{e}_j \wedge \bar{e}_k$), and $\bar{R}$ is the curvature operator of $(M,\bar{g})$.
\end{proposition}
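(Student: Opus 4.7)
The plan is to carry out three steps: (i) verify the anticommutation $D^\partial\chi + \chi D^\partial = 0$ as a pointwise Clifford identity on $\partial N$; (ii) expand $|\hat D\sigma|^2 - |\hat\nabla\sigma|^2$ pointwise and integrate, combining the classical Schrödinger-Lichnerowicz formula for $(D,\nabla)$ with the cross-term and quadratic contributions from the $\psi'/\psi^2$ modification; and (iii) reorganize the $D^\partial$-boundary contribution into the symmetric form involving $\sigma\pm\chi\sigma$ by invoking (i).

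For (i), I would first show that $\chi$ is parallel with respect to $\nabla^\partial$: the grading operators $\epsilon,\bar\epsilon$ are parallel for the Levi-Civita lifts, while the shape-operator corrections added to $\nabla^{S_N,\partial}$ and $\nabla^{S_M,\partial}$ were installed precisely so that $c(e_n)$ and $\bar c(\bar e_n)$ become parallel under the respective boundary connections. Hence $\nabla^\partial\chi = 0$, and the anticommutation reduces to checking
\[
c_\partial(e_j)\chi = c(e_n)c(e_j)(\epsilon\otimes\bar\epsilon)(c(e_n)\otimes\bar c(\bar e_n)) = -\chi\, c_\partial(e_j)\qquad(j<n),
\]
which follows from the Clifford relations $c(e_j)c(e_n)=-c(e_n)c(e_j)$, $c(e_j)(\epsilon\otimes\bar\epsilon)=-(\epsilon\otimes\bar\epsilon)c(e_j)$, and $c(e_n)^2=-1$.

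For (ii), set $\alpha := \tfrac{1}{2}f^{\ast}(\psi'/\psi^2)$ and $\Phi := (\epsilon\otimes\bar\epsilon)\bar c(\partial_s)$, so $\Psi = n\alpha\Phi$ and $\hat\nabla_{e_i} - \nabla_{e_i} = -\alpha\, c(e_i)\Phi$. Since $\partial_s$ is $\bar g$-parallel on $M$, one has $\nabla\Phi = 0$, $\Phi^2 = 1$, $\Phi^{\ast} = \Phi$, and $\{c(e_i),\Phi\} = 0$. Applying the classical twisted Schrödinger-Lichnerowicz identity $D^2 = \nabla^{\ast}\nabla + \mathcal R$ with $\mathcal R$ as in \eqref{R} and integrating yields
\[
\int_N |D\sigma|^2 = \int_N|\nabla\sigma|^2 + \int_N \langle\mathcal R\sigma,\sigma\rangle + (\text{boundary}),
\]
and the standard rewriting of $\langle c(e_n)D\sigma,\sigma\rangle - \langle\nabla_{e_n}\sigma,\sigma\rangle$ on $\partial N$ in terms of $D^\partial$ and the second fundamental forms of $\partial N$ and (the pullback of) $\partial M$ produces $\int_{\partial N}\langle D^\partial\sigma,\sigma\rangle + \int_{\partial N}\langle\mathcal A\sigma,\sigma\rangle$ with $\mathcal A$ as in \eqref{A}. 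Expanding $|\hat D\sigma|^2$ and $|\hat\nabla\sigma|^2$, the purely quadratic pieces $|\Psi\sigma|^2 - \sum_i|B_i\sigma|^2 = (n^2-n)\alpha^2|\sigma|^2$ produce the bulk term $\tfrac{n(n-1)}{4}f^{\ast}(\psi'/\psi^2)^2|\sigma|^2$, while the cross terms $2\,\mathrm{Re}\langle D\sigma,\Psi\sigma\rangle - 2\,\mathrm{Re}\sum_i\langle\nabla_{e_i}\sigma,B_i\sigma\rangle$ collapse, via skew-adjointness of $c(e_i)$ and $\{c(e_i),\Phi\}=0$, to $2(n-1)\alpha\,\mathrm{Re}\langle D\sigma,\Phi\sigma\rangle$. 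One further integration by parts, using $D\Phi = -\Phi D$, then contributes the bulk gradient term $\tfrac{n-1}{2}\int_N\langle c(\nabla f^{\ast}(\psi'/\psi^2))\Phi\sigma,\sigma\rangle$ and leaves a boundary remainder which, after applying the anticommutation $c(e_n)\Phi = -\Phi c(e_n)$ to convert $\langle c(e_n)\sigma,\alpha\Phi\sigma\rangle$ into $\langle c(e_n)\alpha\Phi\sigma,\sigma\rangle$, equals $\tfrac{n-1}{n}\int_{\partial N}\langle c(e_n)\Psi\sigma,\sigma\rangle$ (the factor $\tfrac{n-1}{n}$ arising because $\Psi = n\alpha\Phi$).

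For (iii), with the anticommutation from (i) together with $\chi^2 = 1$ and $\chi^{\ast} = \chi$ (both verified directly from the definitions of $\epsilon,\bar\epsilon$ and the Clifford actions), a short expansion gives
\[
\tfrac{1}{4}\langle D^\partial(\sigma+\chi\sigma),\sigma-\chi\sigma\rangle + \tfrac{1}{4}\langle D^\partial(\sigma-\chi\sigma),\sigma+\chi\sigma\rangle = \langle D^\partial\sigma,\sigma\rangle,
\]
which rewrites the boundary $D^\partial$ contribution in the form stated in \eqref{eqn-D}. This presentation is tailored so that once the local boundary condition $B$, i.e.\ $\chi\sigma = -\sigma$, is imposed later, both summands vanish. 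The main obstacle will be the coefficient and sign bookkeeping in step (ii): the $n$-factor carried by $\Psi = n\alpha\Phi$ must partially cancel against the $n$-fold sum $\sum_i\langle\nabla_{e_i}\sigma,B_i\sigma\rangle$ to leave the $(n-1)$-coefficients in both the bulk gradient correction and the boundary correction, and this cancellation depends on correctly tracking how $c(e_i)$, $\bar c(\partial_s)$ and $\epsilon\otimes\bar\epsilon$ anticommute with one another; once that is settled the remaining identifications are routine.
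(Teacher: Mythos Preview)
Your proposal is correct and follows essentially the same strategy as the paper's proof: expand $|\hat D\sigma|^2$ and $|\hat\nabla\sigma|^2$ in terms of $D$, $\nabla$ and the zeroth-order correction, invoke the twisted Schr\"odinger--Lichnerowicz identity $D^2=\nabla^*\nabla+\mathcal R$, integrate the remaining cross term $2(n-1)\alpha\,\mathrm{Re}\langle D\sigma,\Phi\sigma\rangle$ by parts via a divergence computation, and finally rewrite the boundary using $\nabla_{e_n}+c(e_n)D=D^\partial+\mathcal A$ together with the anticommutation $\{D^\partial,\chi\}=0$. The only slip is a sign: with the paper's inward normal $e_n$, the boundary contribution from the integrated Lichnerowicz formula is $\langle c(e_n)D\sigma,\sigma\rangle+\langle\nabla_{e_n}\sigma,\sigma\rangle$, not the difference you wrote.
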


\begin{proof}
The proof of that $D^{\partial}$ and $\chi$ anti-commute is a tedious but direct
calculation by using only (anti-)commutative properties of Clifford
multiplication and \ the definition of $D^{\partial}$ given in \eqref{boundary dirac operator}.
For the proof of \eqref{eqn-D}, first, we have
\begin{align}
& | \hat{D} \sigma |^2 \\
= & |D \sigma |^2 + \langle \Psi \sigma, D \sigma \rangle + \langle D
\sigma, \Psi \sigma \rangle + \langle \Psi \sigma, \Psi \sigma \rangle
\\
= & |D \sigma |^2 + \langle \Psi \sigma, D \sigma \rangle + \langle D
\sigma, \Psi \sigma \rangle + \tfrac{n^2}{4} f^{\ast}
(\tfrac{\psi'}{\psi^2})^2 | \sigma |^2 .
\end{align}
  By integration by parts, we have
\begin{align}
& \int_N |D \sigma |^2 \\
= & \int_N \langle D^2 \sigma, \sigma \rangle + \int_{\partial N} \langle
c (e_n) D \sigma, \sigma \rangle \\
= & \int_N \langle \nabla^{\ast} \nabla \sigma, \sigma \rangle + \langle
\mathcal{R} \sigma, \sigma \rangle + \int_{\partial N} \langle c (e_n) D
\sigma, \sigma \rangle,
\end{align}
where in the last equality we have used the Schrodinger-Lichnerowicz formula
\cite{gromov-positive-1983,lawson-spin-1989} on a twisted spinor bundle. By integration by parts
  on the first term, we see
  \[ \int_N |D \sigma |^2 = \int_N | \nabla \sigma |^2 + \langle \mathcal{R}
     \sigma, \sigma \rangle + \int_{\partial N} \langle \nabla_{e_n} \sigma,
     \sigma \rangle + \langle c (e_n) D \sigma, \sigma \rangle . \]
  We replace $| \nabla \sigma |^2$ by $| \hat{\nabla} \sigma |^2$ using
\begin{align}
& | \nabla \sigma |^2 \\
= & \left| \hat{\nabla}_{e_i} \sigma - \tfrac{1}{2} f^{\ast} \left(
\tfrac{\psi'}{\psi^2} \right) (\epsilon \otimes \bar{\epsilon})
\cdot (c (e_i) \otimes \bar{c} (\partial_s)) \sigma \right|^2 \\
= & | \hat{\nabla} \sigma |^2 - \langle \hat{\nabla}_{e_i} \sigma,
\tfrac{1}{2} f^{\ast} \left( \tfrac{\psi'}{\psi^2} \right) (\epsilon
\otimes \bar{\epsilon}) \cdot (c (e_i) \otimes \bar{c} (\partial_s))
\sigma \rangle \\
& \quad - \langle \tfrac{1}{2} f^{\ast} \left( \tfrac{\psi'}{\psi^2}
\right) (\epsilon \otimes \bar{\epsilon}) \cdot (c (e_i) \otimes
\bar{c} (\partial_s)) \sigma, \hat{\nabla}_{e_i} \sigma \rangle +
\tfrac{n}{4} f^{\ast} (\tfrac{\psi'}{\psi^2})^2 | \sigma |^2 \\
= & | \hat{\nabla} \sigma |^2 - \langle c (e_i) \hat{\nabla}_{e_i} \sigma,
\tfrac{1}{2} f^{\ast} \left( \tfrac{\psi'}{\psi^2} \right) (\epsilon
\otimes \bar{\epsilon}) \cdot (1 \otimes \bar{c} (\partial_s)) \sigma
\rangle \\
& \quad - \langle \tfrac{1}{2} f^{\ast} \left( \tfrac{\psi'}{\psi^2}
\right) (\epsilon \otimes \bar{\epsilon}) \cdot (1 \otimes \bar{c}
(\partial_s)) \sigma, c (e_i) \hat{\nabla}_{e_i} \sigma \rangle +
\tfrac{n}{4} f^{\ast} (\tfrac{\psi'}{\psi^2})^2 | \sigma |^2 \\
= & | \hat{\nabla} \sigma |^2 - \tfrac{1}{n} \langle \Psi \sigma, (D +
\Psi) \sigma \rangle - \tfrac{1}{n} \langle (D + \Psi) \sigma, \Psi \sigma
\rangle + \tfrac{n}{4} f^{\ast} (\tfrac{\psi'}{\psi^2})^2 | \sigma |^2
\\
= & | \hat{\nabla} \sigma |^2 - \tfrac{1}{n} \langle \Psi \sigma, D \sigma
\rangle - \tfrac{1}{n} \langle D \sigma, \Psi \sigma \rangle -
\tfrac{n}{4} f^{\ast} (\tfrac{\psi'}{\psi^2})^2 | \sigma |^2 .
\end{align}
  To collect all the calculations in the above, we see
\begin{align}
& \int_N | \hat{D} \sigma |^2 \\
= & \int_N | \hat{\nabla} \sigma |^2 + \int_{\partial N} \langle
\nabla_{e_n} \sigma, \sigma \rangle + \langle c (e_n) D \sigma, \sigma
\rangle . \\
& \quad + \int_N \langle \mathcal{R} \sigma, \sigma \rangle + \tfrac{n -
1}{n} \langle \Psi \sigma, D \sigma \rangle + \tfrac{n - 1}{n} \langle D
\sigma, \Psi \sigma \rangle + \tfrac{n (n - 1)}{4} f^{\ast}
(\tfrac{\psi'}{\psi^2})^2 | \sigma |^2 . \label{with Psi}
\end{align}
  Now we handle the term containing $\Psi \sigma$. Assume that at a
  point $p \in N$, $\{e_i \}$ is a geodesic normal frame, that is,
  $\nabla_{e_i} e_j$ vanishes at $p$ for all $i$ and $j$. Also at this point,
\begin{align}
& \tfrac{2}{n} \nabla_{e_i} \langle \Psi \sigma, c (e_i) \sigma \rangle
\\
= & \langle (\nabla_{e_i} \Psi) \sigma, c (e_i) \sigma \rangle + \langle
\Psi \nabla_{e_i} \sigma, c (e_i) \sigma \rangle + \langle \Psi \sigma, c
(e_i) \nabla_{e_i} \sigma \rangle \\
= & \langle \nabla_{e_i} (f^{\ast} (\tfrac{\psi'}{\psi^2}) (\epsilon
\otimes \bar{\epsilon}) \bar{c} (\partial_s) \sigma, c (e_i) \sigma
\rangle + \langle \Psi c (e_i) \nabla_{e_i} \sigma, \sigma \rangle +
\langle \Psi \sigma, D \sigma \rangle \\
= & - \langle c (\nabla  (f^{\ast} (\tfrac{\psi'}{\psi^2})) (\epsilon
\otimes \bar{\epsilon}) \bar{c} (\partial_s) \sigma, c (e_i) \sigma
\rangle + \langle \Psi D \sigma, \sigma \rangle + \langle \Psi \sigma, D
\sigma \rangle \\
= & - \langle c (\nabla  (f^{\ast} (\tfrac{\psi'}{\psi^2})) (\epsilon
\otimes \bar{\epsilon}) \bar{c} (\partial_s) \sigma, c (e_i) \sigma
\rangle + \langle D \sigma, \Psi \sigma \rangle + \langle \Psi \sigma, D
\sigma \rangle .
\end{align}
  Hence, by integration by parts, we have
\begin{align}
& \int_N [\langle D \sigma, \Psi \sigma \rangle + \langle \Psi \sigma, D
\sigma \rangle] \\
= & \int_N \tfrac{n}{2} \langle c (\nabla  (f^{\ast}
(\tfrac{\psi'}{\psi^2})) (\epsilon \otimes \bar{\epsilon}) \bar{c}
(\partial_s) \sigma, \sigma \rangle + \int_{\partial N} \tfrac{n}{2}
\langle c (e_n) \Psi \sigma, \sigma \rangle.
\end{align}
With this in \eqref{with Psi}, it remains to deal with the boundary term to finish
the proof of the proposition.
It follows from the definition of $D^{\partial}$ that
\[ \nabla_{e_n} + c (e_n) D = D^{\partial} +\mathcal{A} .\]
Since $D^{\partial}$ and $\chi$ anti-commute, 
\begin{align}
  & \langle D^{\partial} \sigma, \sigma \rangle \\
  = & \tfrac{1}{4} \langle D^{\partial} ((1 + \chi) \sigma + (1 - \chi)
  \sigma), (1 + \chi) \sigma + (1 - \chi) \sigma \rangle \\
  = & \tfrac{1}{4} \langle D^{\partial} (\sigma + \chi \sigma), \sigma - \chi
  \sigma \rangle + \tfrac{1}{4} \langle D^{\partial} (\sigma - \chi \sigma),
  \sigma + \chi \sigma \rangle . 
\end{align}
And the proof is complete.
\end{proof}

\subsection{Schrondinger-Lichnerowicz formula with comparisons}

Now we establish a consequence of Proposition \ref{Dirac Witten} when the conditions of Theorem \ref{general Llarull}
are in effect
and the section $\sigma\in S_N \otimes f^{\ast}S_M$ satisfies the local boundary condition $B$ (Definition \ref{condition B}).
More specifically, we have the following.

\begin{prop}\label{sl formula}
Let $M$ be an $n$-dimensional compact manifold in the warped product manifold $(X \times I, {g}_{0} = dr^2 + \varphi(r)^2 h) = (X \times I_s, \psi(s)^2 \bar{g})$ such that the boundary $\partial M$ is convex with respect to $\bar{g}$ and $(\log \varphi)'' \leq 0$. If the curvature operator of $(X, h)$ is non-negative and $f: (N, {g}) \to (M, {g}_{0})$ is distance non-increasing, then
\begin{equation}\label{eqn-5}
 \int_N | \hat{D} \sigma |^2  \geqslant  \int_N | \hat{\nabla} \sigma |^2  
     + \tfrac{1}{4}\int_N (R_g - f^{\ast} R_{g_0}) |\sigma|^2
       + \tfrac{1}{2} \int_{\partial N} (H_{{g}} - f^{\ast} H_{{g}_{0}}) |\sigma|^2 
\end{equation}
for every smooth section $\sigma$ of $S_N \otimes f^{\ast} S_M$ that satisfies the local boundary condition B (Definition \ref{condition B}).
\end{prop}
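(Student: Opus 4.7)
The plan is to start from Proposition \ref{Dirac Witten} applied to $\sigma$ and immediately invoke the local boundary condition $B$, which gives $\chi\sigma = -\sigma$ on $\partial N$. Then $\sigma+\chi\sigma = 0$ on $\partial N$ and the two $D^\partial$ boundary integrals in \eqref{eqn-D} drop out. It remains to show pointwise that the interior contribution $\langle\mathcal R\sigma,\sigma\rangle$ together with the extra interior terms dominates $\tfrac{1}{4}(R_g - f^*R_{g_0})|\sigma|^2$, and that the leftover boundary integrand $\langle\mathcal A\sigma,\sigma\rangle + \tfrac{n-1}{n}\langle c(e_n)\Psi\sigma,\sigma\rangle$ dominates $\tfrac{1}{2}(H_g - f^*H_{g_0})|\sigma|^2$.

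For the interior, the key observation is that $\bar g = ds^2 + h$ has non-negative curvature operator $\bar R$, because $(X,h)$ does and the $s$-direction is flat. The hypothesis $f^*g_0 \leq g$ is equivalent to $\psi f_*$ being distance non-increasing from $(N,g)$ to $(M,\bar g)$. A Llarull-Goette-Semmelmann spinor estimate then yields the pointwise operator bound $\tfrac{1}{2}\sum\langle\bar R f_* w_j, \bar w_i\rangle c(w_j)\bar c(\bar w_i) \leq \tfrac{f^*R_h}{4 f^*\psi^2}$ on $S_N\otimes f^*S_M$. Combining this with the identity $\psi'/\psi^2 = \varphi'/\varphi$ and the explicit expression $R_{g_0} = R_h/\varphi^2 - n(n-1)(\varphi'/\varphi)^2 - 2(n-1)(\log\varphi)''$, the algebraic term $\tfrac{n(n-1)}{4}f^*(\psi'/\psi^2)^2|\sigma|^2$ accounts for the quadratic piece, while the cross term is controlled by Cauchy-Schwarz together with the chain-rule identity $\nabla f^*(\psi'/\psi^2) = f^*((\log\varphi)''\,\varphi)\,(f^*ds)^{\sharp}$ and the bound $|f^*ds|_g \leq 1/f^*\psi$; the log-concavity $(\log\varphi)''\leq 0$ supplies the sign needed to recover the $-2(n-1)(\log\varphi)''$ piece of $f^*R_{g_0}$.

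For the boundary, $\mathcal A$ contributes $\tfrac{1}{2}H_g|\sigma|^2$ directly, while the remaining operator piece, via the Weingarten identity $\nabla^{\bar g}_{f_*e_i}\bar e_n = -W(f_*e_i)$, is governed by the second fundamental form of $\partial M$ in $\bar g$. Convexity of $\partial M$ in $\bar g$, combined with the algebraic consequence $(\epsilon\otimes\bar\epsilon)c(e_n)\sigma = -\bar c(\bar e_n)\sigma$ of $\chi\sigma = -\sigma$, makes this piece pointwise at least $-\tfrac{1}{2}(f^*H_{\bar g}/f^*\psi)|\sigma|^2$. The $\Psi$-term, after the same boundary-condition simplification, contributes $-\tfrac{n-1}{2}f^*(\psi'/\psi^2)\,f^*\langle\partial_s,\bar e_n\rangle\,|\sigma|^2$. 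The two contributions then combine, via the conformal mean-curvature formula $H_{g_0} = \psi^{-1}H_{\bar g} + (n-1)(\psi'/\psi^2)\langle\partial_s,\bar e_n\rangle$, to reproduce exactly $-\tfrac{1}{2}f^*H_{g_0}|\sigma|^2$.

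The principal difficulty is the Llarull-Goette-Semmelmann step in its rescaled form: the standard version needs $f$ distance non-increasing for the metric appearing in the curvature operator, whereas here $f$ is distance non-increasing only for $g_0 = \psi^2\bar g$, so the factor $\psi^2$ must be tracked carefully through the bivector action $f_*\wedge f_*$. A secondary subtlety is that the gradient cross term has no intrinsic sign; it is only the combined effect of log-concavity (pinning down the sign of $(\log\varphi)''$) and distance non-increase (bounding $|\nabla f^*(\psi'/\psi^2)|_g$ by the corresponding radial derivative) that yields the pointwise cancellation reproducing the correct scalar curvature comparison. The boundary analysis is parallel but algebraically more delicate because of the need to commute $c(e_n)$, $\bar c(\bar e_n)$, $\bar c(\partial_s)$ and the grading operators under condition $B$.
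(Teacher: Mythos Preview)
Your proposal is correct and follows essentially the same route as the paper: start from Proposition \ref{Dirac Witten}, kill the $D^\partial$ boundary integrals via $\chi\sigma=-\sigma$, invoke the Llarull--Goette--Semmelmann operator bound (Lemma \ref{lemma1} in the paper) for both the interior $\mathcal{R}$-term and the boundary $\mathcal{A}$-term, control the gradient cross term using log-concavity together with $|f^*ds|_g\le 1/f^*\psi$, and assemble the boundary pieces via the conformal mean-curvature formula. One small correction: the bound $\langle\mathcal{A}\sigma,\sigma\rangle\ge\tfrac12\bigl(H_g-f^*(H_{\bar g}/\psi)\bigr)|\sigma|^2$ is a pure operator estimate valid for every section $\sigma$ (it is the boundary analogue of the interior curvature bound, using convexity of $\partial M$ in $\bar g$ and $\psi\mu_i\le 1$); the boundary condition $B$ enters only in evaluating the $\Psi$-term and in dropping the $D^\partial$ integrals, not in the $\mathcal{A}$-estimate itself.
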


Before we proceed, we recall a variant of \cite[Lemmas 2.8, 2.9]{wang-dihedral-2023-arxiv}. Earlier inequalities in the same spirit of \eqref{listing} can be found in \cite{listing-scalar-arxiv-2010}, \cite{goette-scalar-2002}. To obtain Lemma \ref{lemma1}, a simple scaling of \cite{wang-dihedral-2023-arxiv} would suffice, however, we need some parts of the proof later, so we repeat the proof.
\begin{lemma}\label{lemma1}
  Let $\mathcal{R}$ and $\mathcal{A}$ be given in \eqref{R} and \eqref{A}, we have the following.
  \begin{enumerate}
  \item[(a)] If the curvature operator on $(X,h)$ is non-negative, then 
    \begin{equation} \label{listing}
      \mc{R} \geq \frac{R_{{g}}}{4} - f^{\ast}\left(\frac{R_h}{4 \psi^2}\right),
    \end{equation}
    where $R_h$ denotes the scalar curvature of $(X,h)$. 
  \item[(b)] If the second fundamental form $A$ of $\partial M \subset (M,\bar{g})$ is non-negative, then 
    \begin{equation}\label{eqn-8}
      \mc{A} \geq \frac{H_{{g}}}{2} - f^{\ast}\left(\frac{H_{\bar{g}}}{2 \psi}\right),
    \end{equation}
    where $H_{{g}}$ (resp. $H_{\bar{g}}$) denotes the mean curvature of $\partial N \subset (N,{g})$ (resp. $\partial M \subset (M,\bar{g})$).
  \end{enumerate}
\end{lemma}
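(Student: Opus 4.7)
I propose to follow the template of \cite[Lemmas 2.8, 2.9]{wang-dihedral-2023-arxiv}, which treats the analogous statements in the non-rescaled setting, adapted by a single conformal factor of $\psi^{-1}$. The central observation is that while $f\colon (N, g)\to (M, g_{0})$ is assumed distance non-increasing, when viewed as a map into $(M,\bar g)$ with $\bar g=\psi^{-2}g_{0}$ it is only $\psi^{-1}$-contracting, i.e.\ $|f_{\ast} v|_{\bar g}\le \psi^{-1}|v|_g$. Propagating this factor through each Clifford-algebra estimate produces precisely the $\psi^{-2}$ and $\psi^{-1}$ corrections appearing on the right-hand sides of \eqref{listing} and \eqref{eqn-8}.

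For (a), I would first exploit the product structure $\bar g=ds^2+h$ to observe that the curvature operator $\bar R\colon \Lambda^2 TM\to\Lambda^2 TM$ annihilates any two-form containing $\partial_s$ and restricts, on $\Lambda^2 TX$, to the curvature operator of $(X,h)$; in particular $R_{\bar g}=R_h$ and $\bar R\ge 0$ on all of $\Lambda^2 TM$ under the hypothesis. Diagonalising $\bar R$ in an orthonormal basis of two-forms with non-negative eigenvalues and applying Listing's Clifford-algebra operator inequality \cite{listing-scalar-arxiv-2010} eigenvector by eigenvector, together with the $\psi^{-1}$-contraction bound $|f_{\ast}(e_i\wedge e_j)|_{\bar g}\le \psi^{-2}|e_i\wedge e_j|_g$ on two-forms, yields
\begin{equation*}
  \tfrac{1}{2}\sum_{i,j}\langle \bar R f_{\ast} w_j,\bar w_i\rangle c(w_j)\otimes \bar c(\bar w_i)\le f^{\ast}\!\left(\tfrac{R_h}{4\psi^2}\right).
\end{equation*}
Substituting into the definition \eqref{R} of $\mathcal{R}$ then gives the claim.

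For (b), since $f$ is a spin map sending $\partial N$ into $\partial M$, $f_{\ast} e_i\in T\partial M$ for $1\le i\le n-1$, and the shape-operator identity gives $\nabla^{TM}_{f_{\ast} e_i}\bar e_n=-\bar A(f_{\ast} e_i)$, where $\bar A\ge 0$ is the shape operator of $\partial M$ in $(M,\bar g)$ by the convexity hypothesis. Rewriting the second term in \eqref{A} as
\begin{equation*}
  \tfrac{1}{2}\sum_{i=1}^{n-1}c(e_n)c(e_i)\bar c(\bar A(f_{\ast} e_i))\bar c(\bar e_n),
\end{equation*}
diagonalising $\bar A$ in a principal frame with non-negative eigenvalues $\mu_k$, and applying the boundary analog of Listing's Clifford-algebra estimate together with the $\psi^{-1}$-contraction on $f_{\ast} e_i\in T\partial M$, one obtains the bound by $f^{\ast}(H_{\bar g}/(2\psi))$ with $H_{\bar g}=\sum_k \mu_k$.

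The principal technical obstacle is the careful bookkeeping of the conformal factor $\psi^{-1}$ through each Clifford-algebraic operator inequality, together with verifying that $\bar R$ remains non-negative on all of $\Lambda^2 TM$ (and not merely on $\Lambda^2 TX$); both follow cleanly from the product structure of $\bar g$. Once this scaling bookkeeping is made explicit, the argument is a direct transplant of \cite[Lemmas 2.8, 2.9]{wang-dihedral-2023-arxiv}.
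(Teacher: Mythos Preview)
Your proposal is correct and follows essentially the same route as the paper: both arguments exploit that $\bar g=ds^2+h$ makes $\bar R$ block-diagonal (vanishing on forms involving $\partial_s$) and non-negative, then use the square-root/diagonalisation of $\bar R$ together with the Listing--Goette--Semmelmann Clifford inequality, with the conformal factor $\psi$ inserted to convert the $g_0$-contraction hypothesis into a $\psi^{-1}$-contraction for $\bar g$. The paper makes the $\psi$-bookkeeping explicit by writing the mixed Clifford term as $-\tfrac12\sum_k c(f^\ast\psi\, L\bar w_k)\otimes \bar c((f^\ast\psi)^{-1}\bar L\bar w_k)$ with $\bar R=\bar L^2$, then discarding the non-positive square $(c(f^\ast\psi L\bar w_k)\otimes 1+1\otimes\bar c((f^\ast\psi)^{-1}\bar L\bar w_k))^2$; this is exactly the eigenvalue-by-eigenvalue Listing estimate you invoke, so the two presentations differ only cosmetically.
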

\begin{proof}
  The proofs of (a) and (b) are similar, so we only provide the proof of (a).
  Recall from \eqref{R} that
  \begin{equation}
\mathcal{R} = \frac{R_{{g}}}{4} - \frac{1}{2} \sum_{i,j} \left\langle \bar{R} f_* {w}_j, \bar{w}_i \right\rangle {c}({w}_j) \otimes \bar{c}(\bar{w}_i).
\end{equation}
Given that the curvature operator $\bar{R}$ is non-negative along each leaf, there exists a self-adjoint operator $\bar{L} = \operatorname{End}(\wedge^2 TX)$ such that $\bar{R} = \bar{L}^2$, meaning $\left\langle \bar{R} \bar{w}_j, \bar{w}_i \right\rangle_M = \left\langle \bar{L} \bar{w}_j, \bar{L} \bar{w}_i \right\rangle_M$. Set
\begin{equation}
{L} \bar{w}_k = \sum_i \left\langle \bar{L} \bar{w}_k, f_* {w}_i \right\rangle_M {w}_i \in \wedge^2 TN.
\end{equation}
Then, we have
\begin{align}
\begin{split}
&\quad  -\frac{1}{2} \sum_{i,j} \left\langle \bar{R} f_* {w}_j, \bar{w}_i \right\rangle_M {c}({w}_j) \otimes \bar{c}(\bar{w}_i) \\
&= -\frac{1}{2} \sum_{i,j,k} \left\langle \bar{L} (f^{\ast} {w}_j), \bar{w}_k \right\rangle_M \left\langle \bar{L} \bar{w}_i, \bar{w}_k \right\rangle_M {c}({w}_j) \otimes \bar{c}(\bar{w}_i) \\
&= -\frac{1}{2} \sum_k {c}(f^{\ast}\psi {L} \bar{w}_k) \otimes \bar{c}((f^{\ast}\psi)^{-1} \bar{L} \bar{w}_k) \\
&= \frac{1}{4} \sum_k \left[ {c}(f^{\ast}\psi {L} \bar{w}_k)^2 \otimes 1 + 1 \otimes \bar{c}((f^{\ast}\psi)^{-1} \bar{L} \bar{w}_k)^2\right.\\
&\quad - \left.({c}(f^{\ast}\psi {L} \bar{w}_k) \otimes 1 + 1 \otimes \bar{c}((f^{\ast}\psi)^{-1} \bar{L} \bar{w}_k))^2 \right] \\
&\geq \frac{1}{4} \sum_k ({c}(f^{\ast}\psi {L} \bar{w}_k)^2 \otimes 1 + 1 \otimes \bar{c}((f^{\ast}\psi)^{-1} \bar{L} \bar{w}_k)^2),
\end{split}
\end{align}
where the last inequality follows from the fact that the element
\begin{equation}
{c}(f^{\ast}\psi {L} \bar{w}_k) \otimes 1 + 1 \otimes \bar{c}((f^{\ast}\psi)^{-1} \bar{L} \bar{w}_k)
\end{equation}
is skew-symmetric, hence its square is non-positive.

Applying the same reasoning as the Lichnerowicz formula, we find
\begin{equation}
\sum_k \bar{c}((f^{\ast}\psi)^{-1} \bar{L} \bar{w}_k)^2 = -f^{\ast}(\tfrac{R_h}{2 \psi^2}).
\end{equation}

Similarly, by the definition of ${L}$, we have
\begin{align}
\begin{split}
\sum_k {c}(f^{\ast}\psi {L} \bar{w}_k)^2 &= (f^{\ast}\psi)^2 \sum_{i,j,k} \left\langle {L} \bar{w}_k, f_* {w}_i \right\rangle \left\langle {L} \bar{w}_k, f_* {w}_j \right\rangle {c}({w}_i) {c}({w}_j) \\
&= (f^{\ast}\psi)^2 \sum_{i,j} \left\langle \bar{R}(f_* {w}_i), f_*{w}_j \right\rangle {c}({w}_i){c}({w}_j).
\end{split}
\end{align}
We choose a local ${g}$-orthonormal frame ${e}_1, \ldots, {e}_n$ of $TN$ and a local $g_0$-orthonormal frame $\bar{e}_1, \ldots, \bar{e}_n$ of $TM$ such that $f_* {e}_i = \mu_i \bar{e}_i$ with $\mu_i \geq 0$. Then $f_{*}({e}_i \wedge {e}_j) = \mu_i \mu_j \bar{e}_i \wedge \bar{e}_j$. Since $f: (N, {g}) \to (M, \psi^2 \bar{g})$ is distance non-increasing, $\mu_i \psi \leq 1$. Hence
\begin{equation}\label{eqn-7}
\sum_k {c}(f^{\ast}\psi {L} \bar{w}_k)^2 = -(f^{\ast}\psi)^2 \sum_{i,j} \mu_i^2 \mu_j^2 (f^{\ast}\bar{R}_{ijji}) \geq -f^{\ast}(\frac{R_h}{2 \psi^2}).
\end{equation}
The proof is complete.
\end{proof}

Now we can finish the proof of Proposition \ref{sl formula}.

\begin{proof}[Proof of Proposition \ref{sl formula}]
Because $(\psi' / \psi^2)' \leq 0$ (equivalent to $(\log \varphi)'' \leq 0$) and $f: (N, {g}) \to (M, {g}_{0})$ is distance non-increasing, so 
\begin{equation}\label{eqn-1}
  \left\langle {c}({\nabla}(f^{\ast} \frac{n \psi'}{2 \psi^2})) ({\epsilon} \otimes \bar{\epsilon }\bar{c}(\partial_s)) \sigma, \sigma \right\rangle \geq f^{\ast}\left((\frac{n \psi'}{2 \psi^2})'\right) \frac{1}{f^{\ast} \psi} |\sigma|^2.
\end{equation}
Note that $g_0$ is a warped product, and the scalar curvature $R_{g_0}$ is given by
\begin{align}
 \frac{R_{g_0}}{4} &=  \frac{R_h}{4 \varphi^2} - \frac{n(n-1)}{4} \left(\frac{ \varphi'}{ \varphi}\right)^2 - \frac{n-1}{2} \frac{d}{dr} \left(\frac{ \varphi'}{ \varphi}\right)\\
  &= \frac{R_h}{4 \psi^2} - \frac{n(n-1)}{4} \left(\frac{ \psi'}{ \psi^2}\right)^2 - \frac{n-1}{2\psi(s)} \frac{d}{ds} \left(\frac{n \psi'}{2 \psi^2}\right).
\end{align}
Using Lemma \ref{lemma1} (a) and \eqref{eqn-1}, we obtain
\begin{align}\label{eqn-R}
\begin{split}
 &\quad  \left\langle \mc{R} \sigma, \sigma \right\rangle + \frac{n-1}{2}\left\langle {c}({\nabla}(f^{\ast} \frac{ \psi'}{ \psi^2})) ({\epsilon} \otimes \bar{ \epsilon }\bar{c}(\partial_s)) \sigma, \sigma \right\rangle + \frac{n(n-1)}{4} (f^{\ast} \frac{ \psi'}{ \psi^2})^2 |\sigma|^2 \\
&  \geq \tfrac{1}{4} (R_{{g}} - f^{\ast} R_{g_{0}}) |\sigma|^2.
\end{split}
\end{align}
As the second fundamental form $A$ of $\partial M \subset (M, \bar{g})$ is non-negative, then by \eqref{eqn-8},
\begin{align}\label{eqn-2}
\begin{split}
 \langle \mc{A} \sigma, \sigma \rangle \geq \left(\frac{H_{{g}}}{2} - f^{\ast}(\frac{H_{\bar{g}}}{2 \psi})\right) |\sigma|^2.
\end{split}
\end{align}
For the term $\left\langle {c}({e}_n) \Psi \sigma, \sigma \right\rangle$, first we note that for any $Y$ that is orthogonal to
$\bar{e}_n$, $(\epsilon c(e_n)) \otimes (\bar{\epsilon}\bar{c}(\bar{e}_n))$ and
$(\epsilon c(e_n)) \otimes (\bar{\epsilon}\bar{c}(Y))$ anti-commute, also because that $\sigma$ satisfies $\chi \sigma = -\sigma$, so
$\langle (\epsilon c(e_n)) \otimes (\bar{\epsilon}\bar{c}(Y))\sigma, \sigma \rangle = 0$, hence
\begin{align}\label{eqn-3}
\begin{split}
  \left\langle {c}({e}_n) \Psi \sigma, \sigma \right\rangle = \left\langle \bar{e}_n, \partial_s \right\rangle f^{\ast}(\frac{n \psi'}{2 \psi^2}) |\sigma|^2 = f^{\ast}(\frac{n \bar{e}_n(\psi)}{2 \psi^2})|\sigma|^2.
\end{split}
\end{align}
As $\bar{g}$ and $g_0$ are conformal, their mean curvatures are related by
\begin{equation}\label{eqn-4}
  H_{{g}_{0}} = \frac{1}{\psi} H_{\bar{g}} - (n-1) \frac{1}{\psi^2} \bar{e}_n(\psi).
\end{equation}
By \eqref{eqn-2}, \eqref{eqn-3}, and \eqref{eqn-4}, we obtain
\begin{equation}\label{eqn-H}
 \langle \mathcal{A} \sigma, \sigma \rangle + \tfrac{n - 1}{n} \langle c
   (e_n) \Psi \sigma, \sigma \rangle \geqslant (\tfrac{H_g}{2} -
   \tfrac{f^{\ast} H_{g_0}}{2}) | \sigma |^2 . 
  % \int_{\partial N} \frac{n}{n-1} \left\langle ({c}({e}_n) D + \nabla_{{e}_n}) \sigma, \sigma \right\rangle + \left\langle {c}({e}_n) \Psi \sigma, \sigma \right\rangle \\
  % = \frac{n}{n-1} \int_{\partial N} \left(\frac{H_{{g}}}{2} - \frac{f^{\ast} H_{{g}_{0}}}{2}\right) |\sigma|^2.
\end{equation}
Using \eqref{eqn-D}, \eqref{eqn-R}, and \eqref{eqn-H}, we finish the proof of the proposition. 
\end{proof}

\section{Llarull theorems with boundary}\label{sec:llarull-boundary}

In this section, we establish Theorems \ref{general Llarull} and \ref{hyperbolic Llarull}
using the Dirac operator and Schrodinger-Lichnerowicz formula introduced in the previous section.

\subsection{Scalar curvature rigidity in a warped product}

Now we
establish the new rigidity result (Theorem \ref{general Llarull}) concerning scalar curvature and
mean curvature for domains in warped product manifolds.
The ingredients are the methods from \cite{wang-dihedral-2023-arxiv},
the Dirac operator introduced Section \ref{sec:new dirac} and the
Schrondinger-Lichnerowicz formula (Proposition \ref{sl formula}).

\begin{proof}[Proof of Theorem \ref{general Llarull}]
	Note that $\hat{D}$ only differs from the usual twisted Dirac operator on $S_N \otimes f^{\ast} S_M$ by a bounded endomorphism. Therefore, $\hat{D}$ with the local boundary condition $B$ is a Fredholm operator, and its Fredholm index is
\begin{equation}
  \mathrm{Ind}(\hat{D}) = \deg(f) \cdot \chi(M), \label{index theory}
\end{equation}
where $\deg(f)$ is the degree of $f$ and $\chi(M)$ is the Euler characteristic of $M$. By assumption, $\mathrm{Ind}(\hat{D}) \neq 0$. It follows that there exists a non-zero smooth section $\sigma$ of $S_N \otimes f^{\ast} S_M$ satisfying the local boundary condition $B$ such that $\hat{D} \sigma = 0$. Using \eqref{eqn-5} and conditions (i) and (ii) of Theorem \ref{general Llarull}, $\sigma$ satisfies the following equation
\begin{equation}\label{eqn-6}
  \nabla_{{\xi}} \sigma - \frac{1}{n} {c}({\xi}) \Psi \sigma = \nabla_{{\xi}} \sigma + \frac{1}{n} {c}({\xi}) D \sigma= 0
\end{equation}
for all ${\xi} \in TN$. For any given point $x \in N$ and any smooth path in $N$ starting at $x$, the restriction of $\sigma$ to the path satisfies the homogeneous differential equation given by \eqref{eqn-6}. By the uniqueness of the solution, $\sigma$ vanishes along the entire path if $\sigma(x) = 0$. Without loss of generality, assume $N$ is connected. Since $\sigma$ is a non-zero section, the above discussion implies that $\sigma$ is non-zero everywhere on $N$. Hence, the inequality \eqref{eqn-5} implies that the equalities in (i) and (ii) of Theorem \ref{general Llarull} hold. 

If $\varphi$ is strictly log-concave, then the equality in \eqref{eqn-1} holds, meaning
\begin{equation}
  ({\epsilon} \otimes \bar{\epsilon}) ({c}({\nabla} {(f^{*}s)}) \otimes \bar{c}(\partial_s)) \sigma = -\frac{1}{f^{\ast} \psi} \sigma,
\end{equation}
 Then
\begin{equation}
  {c}(f^{\ast} \psi  \nabla(f^{*}{s})) \sigma = -({\epsilon} \otimes \bar{\epsilon}) \bar{c}(\partial_s) \sigma.
\end{equation}
Since $\sigma$ is non-zero everywhere, we have $| \nabla (f^{*}{s})| = \frac{1}{f^{\ast} \psi}$. From \eqref{eqn-6}, we have
\begin{align}
\begin{split}
  \nabla_{{\xi}} \sigma &= \frac{1}{n} {c}({\xi}) \Psi \sigma \\
  &= \frac{1}{2} f^{\ast}(\frac{\psi'}{\psi^2}) {c}({\xi}) ({\epsilon} \otimes \bar{\epsilon}) \bar{c}(\partial_s) \sigma \\
  &= -\frac{1}{2} f^{\ast}(\frac{\psi'}{\psi}) {c}({\xi}) {c}(\nabla(f^{*} {s})) \sigma.
 \end{split}
\end{align}
Thus
\begin{align}
\begin{split}
  {c}(\nabla_{{\xi}}(f^{\ast} \psi \nabla (f^{*}s) )) \sigma &= \nabla_{{\xi}}(c(f^{\ast} \psi  \nabla (f^{*}s) ) \sigma) - c(f^{\ast} \psi  \nabla (f^{*}s)  ) \nabla_{{\xi}} \sigma \\
  &= -({\epsilon} \otimes \bar{\epsilon}) \bar{c}(\partial_s) \nabla_{{\xi}} \sigma - c(f^{\ast} \psi  \nabla (f^{*}s)  ) \nabla_{{\xi}} \sigma \\
  &= \frac{1}{2} f^{\ast}(\frac{\psi'}{\psi}) ({\epsilon} \otimes \bar{\epsilon}) \bar{c}(\partial_s) {c}({\xi}) {c}( \nabla (f^{*}s) ) \sigma \\
  &\quad + \frac{1}{2} f^{\ast}(\frac{\psi'}{\psi}) c(f^{\ast} \psi \nabla(f^{*}s) ) {c}({\xi}) {c}(\nabla(f^{*}s)) \sigma \\
  &= \frac{1}{2} f^{\ast}(\frac{\psi'}{\psi^2}) {c}({\xi}) \sigma + \frac{1}{2} f^{\ast}(\frac{\psi'}{\psi^2}) {c}({\xi}) \sigma \\
  &\quad - f^{\ast}(\psi') \left\langle {\xi}, \nabla(f^{*}s) \right\rangle {c}( \nabla(f^{*}s) ) \sigma \\
  &= {c} \left(f^{\ast}(\frac{\psi'}{\psi^2}) ({\xi} - \left\langle {\xi}, f^{\ast} \psi { \nabla(f^{*}s) } {s} \right\rangle f^{\ast} \psi { \nabla(f^{*}s) }) \right) \sigma,
 \end{split}
\end{align}
which implies that
\begin{equation}
  \nabla_{{\xi}}(f^{\ast} \psi \nabla (f^{*}s)) = f^{\ast}(\frac{\psi'}{\psi^2}) ({\xi} - \left\langle {\xi}, f^{\ast} \psi  \nabla (f^{*}s)  \right\rangle f^{\ast} \psi  \nabla (f^{*}s)).
\end{equation}
For any ${\xi}, {\eta} \in TN$, we have
\begin{align}
\begin{split}
  (\nabla (f^{\ast}(\psi d s)))({\xi}, {\eta}) &= \left(\nabla_{{\xi}} f^{\ast}(\psi d s)\right)({\eta}) \\
  &= \left\langle \nabla_{{\xi}} (f^{\ast} \psi ( \nabla (f^{*} s))) {\eta} \right\rangle \\
  &= f^{\ast}(\frac{\psi'}{\psi^2}) \left({g} - f^{\ast}(\psi ds) \otimes f^{\ast}(\psi ds)\right)({\xi}, {\eta}).
 \end{split}
\end{align}
In particular, the flow lines generated by the vector field $f^{\ast} \psi \nabla (f^{*}s) $ on $N$ are unit speed geodesics. 
For any $x \in N \backslash \partial N$, we assume that
\begin{equation}
  f(x) \in (X \times \{s\}) \cap M
\end{equation}
for some $s \in I_s$. Then $x \in f^{-1}((X \times \{s\}) \cap M)$. Along the geodesic generated by $f^{\ast} \psi \nabla (f^{*} s) $ starting from $x$, the metric ${g}$ around $x$ has the form 
\begin{equation}
  {g} = f^{\ast}(\psi ds) \otimes f^{\ast}(\psi ds) + {g}_{{s}},
\end{equation}
where $g_{s}$ is the induced metric on the level set.
By a direct calculation, one finds 
\begin{equation}
  \mathcal{L}_{f^{\ast} \psi \nabla(f^{*}s) }({g}) = 2 \nabla (f^{\ast}(\psi d s)) = 2 f^{\ast}(\frac{\psi'}{\psi^2}) \left({g} - f^{\ast}(\psi ds) \otimes f^{\ast}(\psi ds)\right),
\end{equation}
where $\mathcal{L}$ denotes the Lie derivative. Thus 
\begin{equation}
  \mathcal{L}_{f^{\ast} \psi \nabla (f^{*}s) }({g}_{{s}}) = 2 f^{\ast}(\frac{\psi'}{\psi^2}) {g}_{{s}}.
\end{equation}
We denote ${g}_{{s}} := (f^{\ast}\psi)^2 {\hat{h}}$, then
\begin{align}\label{eqn-9}
\begin{split}
  0 &= \mathcal{L}_{f^{\ast} \psi \nabla (f^{*}s) }({g}_{{s}}) - 2 f^{\ast}(\frac{\psi'}{\psi^2}) {g}_{{s}} \\
  &= \frac{f^{\ast} \psi \nabla (f^{*}s) ((f^{\ast} \psi)^2)}{(f^{\ast} \psi)^2} {g}_{{s}} - 2 f^{\ast}(\frac{\psi'}{\psi^2}) {g}_{{s}} + (f^{\ast} \psi)^2 \mathcal{L}_{f^{\ast} \psi \nabla (f^{*}s) }({\hat{h}}) \\
  &= (f^{\ast} \psi)^2 \mathcal{L}_{f^{\ast} \psi \nabla (f^{*}s) }({\hat{h}}),
 \end{split}
\end{align}
where the last equality holds since
\begin{align}
\begin{split}
\nabla (f^{*}s) ((f^{\ast} \psi)^2) &= 2 (f^{\ast} \psi) (f^{\ast} \psi') \nabla(f^{*}s) (d( f^{\ast} s)) \\
  &= 2 (f^{\ast} \psi) (f^{\ast} \psi') |\nabla (f^{*}s)|^2 \\
  &= 2 f^{\ast}(\tfrac{\psi'}{\psi}).
 \end{split}
\end{align}
From \eqref{eqn-9}, we know that ${\hat{h}}$ is a metric on $f^{-1}((X \times \{s\}) \cap M)$ around $x$, and is independent of the direction $f^{\ast} \psi \nabla (f^{*}s) $. Thus the metric ${g}$ has the following form around $x$
\begin{equation}
  {g} = (f^{\ast} \psi)^2 (f^{\ast}(ds^2) + {\hat{h}}).
\end{equation}
Since the equality in \eqref{eqn-5} holds, the equality in \eqref{eqn-7} also holds, that is,
\begin{equation}
  \sum_{i \neq j} (1 - (\psi \mu_i) (\psi \mu_j)) R_{ijji} = 0,
\end{equation}
where $1 \leq i, j \leq n-1$ such that $\bar{e}_i, \bar{e}_j \in T((X \times \{t\}) \cap M)$ since $R$ is the leaf-wise curvature along each leaf $M \cap (X \times \{t\})$.
If $(X, h)$ has non-negative sectional curvature and positive Ricci curvature, then $R_{ijji} \geq 0$, and for any $i$, there exists $i_0 \neq i$ such that $R_{ii_0 i_0 i} > 0$. Note that $\psi \mu_i \leq 1$ for any $i$, so
\begin{equation}
  (1 - (\psi \mu_i)(\psi \mu_{i_0})) R_{ii_0 i_0 i} = 0,
\end{equation}
which implies that $\psi \mu_i = 1$ for any $1 \leq i \leq n-1$. Thus
\begin{align}
\begin{split}
  (f^{\ast}(\psi^2 h))({e}_i, {e}_j) &= (f^{\ast} g)({e}_i, {e}_j) \\
  &= g(f_* {e}_i, f_* {e}_j) \\
  &= \psi^2 g_0(\mu_i e_i, \mu_j e_j) \\
  &= \delta_{ij} = {g}({e}_i, {e}_j) \\
  &= (f^{\ast} \psi)^2 {\hat{h}}({e}_i, {e}_j).
 \end{split}
\end{align}
It follows that $f^{\ast} h = {\hat{h}}$. Thus
\begin{equation}
  {g} = (f^{\ast} \psi)^2 (f^{\ast}(ds^2) + f^{\ast} h) = f^{\ast} g_0,
\end{equation}
that is, $f: (N, {g}) \to (M, g_{0})$ is a local isometry. 

If $\partial M$ is strictly convex with respect to $\bar{g}$, then the second fundamental form is positive, that is, $A(\bar{e}_a, \bar{e}_a) > 0$ for any non-zero $\bar{e}_a \in T(\partial M)$. Similarly, the equality in \eqref{eqn-8} implies that
\begin{equation}
  f_* {e}_a = \tfrac{1}{\psi} \bar{e}_a, \quad 1 \leq a \leq n-1
\end{equation}
for an orthonormal basis $\{e_a\}_{1 \leq a \leq n-1}$ (resp. $\{e_a\}_{1 \leq a \leq n-1}$) of $(T(\partial N), {g})$ (resp. $(T(\partial M), {g}_0)$).
Hence $(f^{\ast} {g_{0}})({e}_a, {e}_b) = {g}_{0}({e}_a, {e}_b)$, that is, $\partial f: (\partial N, {g}) \to (\partial M, {g}_{0})$ is a local isometry.
\end{proof}

\subsection{Rigidity of smooth domains in Poinca\'{e} half-space model}\label{sec:llarull}

Now we prove the rigidity Theorem \ref{hyperbolic Llarull} for smooth domains that are strictly convex 
in $(\mb{R}^n_+,\delta)$ of a Poincar\'{e} half-space model of the hyperbolic space.
Here, we assume that $n$ is even. Note that $b=\psi(x^1)^2\delta$, where $\psi(x^1)=\frac{1}{x^1}$. Then 
$
  \frac{\psi'}{\psi^2}=-1
$,
and, \eqref{eqn-1} still holds without assuming that $f=\mathrm{id}: (\Omega, g) \to (\Omega, b)$ is distance non-increasing in the interior of $\Omega$. By Theorem \ref{general Llarull}, $R_g = -n(n-1)$, $H_g = H_b$, and $\left.g\right|_{\partial \Omega} = \left.b\right|_{\partial \Omega}$. It remains to show that $(\Omega, g)$ is hyperbolic.

Let $\sigma$ be a non-zero section of $S_{\Omega_g} \otimes S_{\Omega_\delta}$ satisfying the local boundary condition $B$ such that $\hat{D} \sigma = 0$, where $\Omega_\delta$ (resp. $\Omega_g$) denotes the Riemannian manifold $(\Omega, \delta)$ (resp. $(\Omega, g)$). Let $\{\bar{s}_\alpha\}_{1\leq \alpha\leq m}$, $m=2^{n/2}$, be an orthonormal basis of the space $\Delta_n$ of spinors, which is also a global parallel frame of the spinor bundle
$S_{\Omega_\delta}=\mb{R}^n_+\times \Delta_n$. Then 
\begin{equation}
  \sigma=\sum_{\alpha=1}^{m}{s}_\alpha\otimes \bar{s}_\alpha,
\end{equation}
where ${s}_\alpha$ are smooth sections of $S_{\Omega_g}$.  Denote by $s=(s_1,\ldots,s_m)$ the $m$-tuple of spinors.
 The $m$-tuple of spinors $s$ should be understood as a spinor-valued column vector. Given a unit Euclidean vector $X$, we define the matrix $\omega_X$ as
\begin{equation}
  (\omega_{X}{s})_\alpha=\sum_{\beta=1}^m\omega_{X\alpha\beta}{s}_\beta=\sum_{\beta=1}^m\left\langle \b{\epsilon} \b{c}(X)\bar{s}_\beta,\bar{s}_\alpha\right\rangle {s}_\beta,
\end{equation}
where 
\begin{equation}
    \omega_{X\alpha\beta}:=\left\langle \b{\epsilon}\b{c}(X)\bar{s}_\beta,\bar{s}_\alpha\right\rangle.
\end{equation}
Hence,
\begin{align}
\begin{split}
  0&=\nabla_{{\xi}}\sigma- \tfrac{1}{n} {c}({\xi}) \Psi \sigma\\
  &=\nabla_{{\xi}}\sigma- \tfrac{1}{2}({\epsilon} \otimes \bar{\epsilon}) \cdot ({c}({\xi})  \otimes \bar{c}(\partial/\partial x^1)) \sigma\\
  &=\sum_{\alpha}(\nabla_{{\xi}}{s}_\alpha)\otimes \bar{s}_\alpha-\tfrac{1}{2}({\epsilon} \otimes \bar{\epsilon}) \sum_{\beta,\alpha} {c}({\xi}){s}_\beta\otimes  \langle \bar{c}(\partial/\partial x^1)\bar{s}_\beta,\bar{s}_\alpha\rangle \bar{s}_\alpha\\
  &=\sum_{\alpha}(\nabla_{{\xi}}{s}_\alpha-\tfrac{1}{2}\epsilon{c}({\xi})(\omega_{N_0}{s})_\alpha)\otimes \bar{s}_\alpha
 \end{split}
\end{align}
where $N_0:=\tfrac{\partial}{\partial{x^1}}$.
Hence $s$ satisfies the following equation
\begin{equation}\label{eqn-10}
  \nabla_{{\xi}}{s}-\tfrac{1}{2}\epsilon{c}({\xi})\omega_{N_0}{s}=0. 
\end{equation}
We can choose basis of $S_{\Omega_{\delta}}$ such that the matrix $\omega_{N_0}$ is diagonal with entries $\pm 1$ which reduces \eqref{eqn-10} to
\begin{equation}
  \nabla_{{\xi}}{s}_\alpha \mp\tfrac{1}{2}\epsilon{c}({\xi}){s}_\alpha =0. 
\end{equation}
We would like to point out that the spinor ${s}_\alpha$, which satisfies the above, has already been studied in asymptotically hyperbolic spin manifold \cite{minoo-scalar-1989}.

Similarly, the boundary condition $({\epsilon}\otimes \bar{\epsilon})({c}(\nu)\otimes \bar{c}(N))\sigma=-\sigma$ is equivalent to 
\begin{equation}
  {\epsilon}{c}(\nu)\omega_N{s}=-{s}, \label{bc brendle}
\end{equation}
where $\nu$ (resp. $N$) is the inward-pointing unit normal vector  of $\partial \Omega$ with respect to $g$ (resp. $\delta$).
We introduce a \text{{\itshape{formal}}} inner product $\langle c, s \rangle$
of a spinor $c \in \mathbb{C}^m$ and $m$-tuple $s$ of spinors by
\begin{equation}
  \langle c, s \rangle = \sum_{\alpha = 1}^m \bar{c}_\alpha s_\alpha, \label{c s}
\end{equation}
where $\bar{c}_i$ is the complex conjugate of $c \in \mathbb{C}$. To avoid double
levels of angular brackets, we use $\langle c_1, s_1 \rangle \cdot \langle
c_1, s_2 \rangle$ to denote $\langle \langle c_1, s_1 \rangle, \langle c_2,
s_2 \rangle \rangle$ when there is no confusion.

We have the following lemma.

\begin{lemma}
  For any spinor $c \in \mathbb{C}^m$ and $m$-tuple $s$ of spinors
  \[ \langle c, \omega_{N_0} s \rangle = \langle \omega_{N_0} c, s \rangle .
  \]
\end{lemma}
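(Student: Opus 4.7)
The plan is to reduce the claim to the statement that the matrix $\omega_{N_0}$ is Hermitian, and then to verify that Hermiticity from standard algebraic properties of $\bar{\epsilon}$ and $\bar{c}(N_0)$ acting on the spin representation $\Delta_n$.

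\textbf{Step 1: Reduction to Hermiticity.} First I would unpack both sides of the asserted identity using the definition \eqref{c s} of $\langle \cdot, \cdot \rangle$. One obtains
\[
\langle c, \omega_{N_0} s\rangle = \sum_{\alpha, \beta} \bar{c}_\alpha\, \omega_{N_0,\alpha\beta}\, s_\beta,
\qquad
\langle \omega_{N_0} c, s\rangle = \sum_{\alpha,\beta} \overline{\omega_{N_0,\beta\alpha}}\, \bar{c}_\alpha\, s_\beta.
\]
Since the identity must hold for arbitrary $c\in \mathbb{C}^m$ and arbitrary $m$-tuples of spinors $s$, testing against the standard basis of $\mathbb{C}^m$ and against $s$ concentrated in a single index reduces the lemma to the scalar matrix relation $\omega_{N_0,\alpha\beta} = \overline{\omega_{N_0,\beta\alpha}}$, i.e., that $\omega_{N_0}$ is a Hermitian matrix.

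\textbf{Step 2: Hermiticity of $\omega_{N_0}$.} Next I would show that the endomorphism $\bar{\epsilon}\,\bar{c}(N_0)$ of $\Delta_n$ is self-adjoint with respect to the standard Hermitian inner product on $\Delta_n$ (the one for which $\{\bar{s}_\alpha\}$ is orthonormal). This relies on three standard facts: (i) $\bar{c}(X)^{*} = -\bar{c}(X)$ for any real unit vector $X$ (Clifford multiplication by a real vector is skew-Hermitian), (ii) $\bar{\epsilon}^{*} = \bar{\epsilon}$, since $n$ is even and $\bar{\epsilon}$ is the standard chirality operator, and (iii) $\bar{\epsilon}$ anti-commutes with $\bar{c}(X)$. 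Combining these,
\[
(\bar{\epsilon}\,\bar{c}(N_0))^{*} \;=\; \bar{c}(N_0)^{*}\,\bar{\epsilon}^{*} \;=\; -\bar{c}(N_0)\,\bar{\epsilon} \;=\; \bar{\epsilon}\,\bar{c}(N_0),
\]
so $\bar{\epsilon}\,\bar{c}(N_0)$ is self-adjoint. Reading off the matrix coefficients in the orthonormal basis $\{\bar{s}_\alpha\}$ and using $\overline{\langle u,v\rangle} = \langle v,u\rangle$ on the fiber then gives exactly $\overline{\omega_{N_0,\alpha\beta}} = \omega_{N_0,\beta\alpha}$, completing the argument.

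\textbf{Expected obstacle.} There is no substantive difficulty; the only delicate point is bookkeeping the complex conjugations introduced by the $\bar{c}_\alpha$ in the definition of $\langle c,s\rangle$, and checking that they match precisely the transposition of indices produced by the Hermiticity of $\omega_{N_0}$. All other inputs are standard facts about the spin representation in even dimension.
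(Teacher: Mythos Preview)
Your proposal is correct and follows essentially the same approach as the paper: both unpack the formal inner product in components and reduce the identity to the Hermiticity of the matrix $\omega_{N_0}$. The paper simply asserts this Hermiticity (it is noted elsewhere that $\bar{\epsilon}\,\bar{c}(N_0)$ is Hermitian symmetric), whereas you additionally supply the standard verification in Step~2.
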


\begin{proof}
  We proceed by writing in components,
  \[ \langle c, \omega_{N_0} s \rangle = \sum_{\alpha, \beta} \bar{c}_\alpha
     (\omega_{N_0})_{\alpha \beta} s_\beta = \sum_{\alpha, \beta}
     \overline{\overline{(\omega_{N_0})_{\alpha \beta}} c_\alpha} s_\beta . \]
  Since $\omega_{N_0}$ is Hermitian, so $\overline{(\omega_{N_0})_{\alpha \beta}} =
  (\omega_{N_0})_{\beta \alpha}$. So
  \[ \langle c, \omega_{N_0} s \rangle = \sum_{\alpha \beta}
     \overline{(\omega_{N_0})_{\beta \alpha} c_\alpha} s_\beta = \langle \omega_{N_0} c, s
     \rangle . \]
  
\end{proof}

Now we show that the components of $s$ that we obtained are linearly independent.

\begin{proposition} \label{full independence}
  The components of $s$ are linearly independent.
\end{proposition}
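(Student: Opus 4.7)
I would prove the proposition by contradiction, reducing it to a Clifford-irreducibility statement. Suppose the components $s_1,\ldots,s_m$ are linearly dependent, so (writing the coefficients as $\overline{c_\alpha}$) there is a nonzero $c\in\mathbb{C}^m$ with $\langle c,s\rangle\equiv 0$ on $\Omega$. Set
\[
K:=\{c\in\mathbb{C}^m:\langle c,s\rangle\equiv 0 \text{ on }\Omega\}.
\]
The strategy is to show $K$ is invariant under the full complexified Clifford algebra acting on $\Delta_n\cong\mathbb{C}^m$. Since this action is irreducible for even $n$, one gets $K=\{0\}$ or $K=\mathbb{C}^m$; the latter would force every $s_\alpha\equiv 0$, hence $\sigma\equiv 0$, contradicting the nonvanishing of $\sigma$ that was already established in the proof of Theorem \ref{general Llarull}.

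\textbf{Execution.} First I would differentiate $\langle c,s\rangle\equiv 0$, using \eqref{eqn-10} and the Lemma proved just above; this yields $\tfrac{1}{2}\epsilon c(\xi)\langle\omega_{N_0}c,s\rangle\equiv 0$, so $\omega_{N_0}c\in K$ and $K$ splits along the $\pm 1$-eigenspaces of $\omega_{N_0}$. For $c$ in the $\pm$-eigenspace the section $\psi_c:=\langle c,s\rangle$ is a $\pm$-imaginary Killing spinor; a $+$-Killing and a $-$-Killing spinor that sum to zero identically must both vanish (apply $\nabla_\xi$ and invert $\epsilon c(\xi)$), so WLOG $c\in K^+$. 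Next I would feed $c$ into the boundary condition $\epsilon c(\nu)\omega_N s=-s$, pair with $c$, and use the Lemma again to get $\langle\omega_N c,s\rangle(p)=0$ at each $p\in\partial\Omega$. Decomposing $N(p)=\langle N,N_0\rangle N_0+N^\perp(p)$ and noting that $\omega_{N^\perp}=\bar\epsilon\bar c(N^\perp)$ anticommutes with $\omega_{N_0}$ (hence carries $K^+$ into the $-$-eigenspace), this reduces to $\langle\omega_{N^\perp(p)}c,s\rangle(p)=0$. For a fixed $X\in N_0^\perp$, the spinor $\psi_{\omega_X c}$ is a globally defined $-$-imaginary Killing spinor on $(\Omega,g)$; choosing the boundary point $p$ at which $N^\perp(p)=X$ and invoking unique continuation for parallel sections of the modified connection $\nabla+\tfrac{1}{2}\epsilon c$, I conclude $\psi_{\omega_X c}\equiv 0$, i.e., $\omega_X c\in K$. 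Strict convexity of $\partial\Omega$ makes the boundary normals sweep out an open subset of $S^{n-1}$, so $N^\perp(p)$ sweeps out an open subset of $N_0^\perp$; by linearity of $X\mapsto\omega_X c$, $\omega_X c\in K$ for every $X\in N_0^\perp$, and together with $\omega_{N_0}c\in K$ one obtains $\omega_Y c\in K$ for every $Y\in\mathbb{R}^n$. Running the symmetric argument from elements of $K^-$ and iterating, $K$ becomes stable under the subalgebra of $\mathrm{End}(\Delta_n)$ generated by $\{\omega_Y:Y\in\mathbb{R}^n\}$. Since $\omega_{Y_1}\omega_{Y_2}=-\bar c(Y_1)\bar c(Y_2)$ and $\bar\epsilon$ is proportional to the volume element $\bar c(e_1)\cdots\bar c(e_n)$ for even $n$, this subalgebra coincides with the full complexified Clifford algebra, whose action on $\Delta_n$ is irreducible. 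Hence $K=\{0\}$ or $K=\mathbb{C}^m$, and the proposition follows.

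\textbf{Main obstacle.} The delicate step is extracting the full Clifford orbit from only one pointwise boundary identity. Two ingredients conspire: strict convexity, which makes $N(p)$ (and hence $N^\perp(p)$) genuinely vary through an open set of directions, and unique continuation applied to $\psi_{\omega_X c}$, which promotes vanishing at one boundary point to vanishing everywhere. Without strict convexity (for instance, if the face were flat so $N$ were constant) only one direction of $X$ would be available and the Clifford-generation step would collapse; the hypothesis that $\Omega$ be strictly convex in Theorem \ref{hyperbolic Llarull} is exactly what powers this mechanism. A secondary bookkeeping nuisance is that $\langle c,s\rangle$ is conjugate-linear in $c$, so the Hermiticity of $\omega_{N_0}$ proved in the Lemma above is essential to keep the $\omega_{N_0}$-eigenspace decomposition of $K$ well-defined.
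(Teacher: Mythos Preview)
Your proposal is correct and follows essentially the same route as the paper. The paper defines the set $L=\{c:\langle c,(1+\omega_{N_0})s\rangle=\langle c,(1-\omega_{N_0})s\rangle=0\}$ and shows (Lemma~\ref{L vanishes}) that $L$ is invariant under every $\omega_N$ with $N$ a boundary normal, hence under all of $\mathrm{End}(\mathbb{C}^m)$, forcing $L=\{0\}$; your set $K$ coincides with $L$ (differentiating $\langle c,s\rangle\equiv 0$ gives $\langle c,\omega_{N_0}s\rangle\equiv 0$), and your invariance argument via the eigenspace splitting $K=K^+\oplus K^-$ and the decomposition $N=\langle N,N_0\rangle N_0+N^\perp$ is a reorganization of the paper's algebraic manipulation with $\omega_N\omega_{N_0}+\omega_{N_0}\omega_N=2\langle N,N_0\rangle$.

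One minor difference worth noting: the paper, after showing $L=\{0\}$, only draws the conclusion that each of $S_1=\{s_1,\dots,s_{m/2}\}$ and $S_2=\{s_{m/2+1},\dots,s_m\}$ is independent, and then invokes a separate orthogonality lemma (Lemma~\ref{cross orthogonal}, which uses a boundary point with normal $N=N_0$) to glue the halves together. Your argument is more direct here: $K=\{0\}$ already says that $\sum_\alpha \bar c_\alpha s_\alpha\equiv 0$ forces $c=0$, i.e.\ full linear independence, so the orthogonality step is unnecessary for this proposition.
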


We show first that the components of $(1 + \omega_{N_0}) s$ and $(1 -
\omega_{N_0}) s$ are respectively linearly independent, and then we finish the
proof by showing that any pair of components with one from $(1 + \omega_{N_0})
s$ and the other from $(1 - \omega_{N_0}) s$ are orthogonal. To this end, we
introduce the following set
\begin{equation}
  L = \{c \in \mathbb{C}^m : \text{ } \langle c, (1 + \omega_{N_0}) s \rangle
  = \langle c, (1 - \omega_{N_0}) s \rangle = 0 \text{ everywhere on }
  \Omega\} \label{L set}
\end{equation}
and we have the following lemma.

\begin{lemma}\label{L vanishes}
  The set $L$ defined in \eqref{L set} is $\{0\}$.
\end{lemma}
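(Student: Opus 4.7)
The plan is to argue by contradiction: assume $c \in L$ is nonzero. First I would decompose $c = c^+ + c^-$ with $\omega_{N_0} c^\pm = \pm c^\pm$. Since $c \in L$, both $\langle c, s \rangle = 0$ and $\langle c, \omega_{N_0} s \rangle = \langle \omega_{N_0} c, s \rangle = 0$ hold on $\Omega$; adding and subtracting, $\sigma_{c'} := \langle c', s \rangle$ vanishes identically on $\Omega$ for both $c' = c^+$ and $c' = c^-$ separately. Assume without loss of generality that $c^+ \neq 0$. The point is that for any $c' \in V_\pm$ (the $\pm 1$-eigenspace of $\omega_{N_0}$), the spinor $\sigma_{c'}$ satisfies the imaginary Killing spinor equation $\nabla_\xi \sigma_{c'} = \pm \tfrac{1}{2} \epsilon c(\xi) \sigma_{c'}$ coming from \eqref{eqn-10}, so $\sigma_{c'}$ is either identically zero or nowhere zero on $\Omega$ by unique continuation.

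Next I would use the boundary condition \eqref{bc brendle} to enlarge $L$. Applying $\langle c^+, \cdot \rangle$ to $\epsilon c(\nu) \omega_N s = -s$ on $\partial\Omega$ and using $\sigma_{c^+} \equiv 0$ gives $\sigma_{\omega_N c^+} = 0$ along the boundary (after cancelling the invertible operator $\epsilon c(\nu)$). Decomposing
\[
  \omega_N c^+ = \langle N, N_0 \rangle c^+ + \omega_{N^\perp} c^+,
\]
where $N^\perp := N - \langle N, N_0 \rangle N_0$, and using that $\omega_{N_0}$ anti-commutes with $\omega_V$ for $V \perp N_0$, one checks the first summand lies in $V_+$ (hence contributes zero) while the second lies in $V_-$. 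Thus at each $p \in \partial\Omega$, the type-$(-)$ Killing spinor $\sigma_{\omega_{N^\perp(p)} c^+}$ vanishes at $p$, and by unique continuation it vanishes throughout $\Omega$, giving $\omega_{N^\perp(p)} c^+ \in L \cap V_-$. Strict convexity of $\Omega$ makes the Gauss map $p \mapsto N(p)$ a diffeomorphism onto $S^{n-1}$, so $N^\perp(p)$ sweeps out all directions in $N_0^\perp$, yielding $\omega_W c^+ \in L$ for every $W \in N_0^\perp$.

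Iterating the argument, alternating between $V_+$ and $V_-$ under multiplication by $\omega_W$'s with $W \in N_0^\perp$, the $\mathrm{Cl}(N_0^\perp) \cong \mathrm{Cl}(n-1)$-submodule of $\Delta_n$ generated by $c^+$ is entirely contained in $L$. The final step is a representation-theoretic fact: since $n - 1$ is odd, the volume element $V_{\mathrm{vol}} = \omega_{W_1} \cdots \omega_{W_{n-1}}$ of $\mathrm{Cl}(n-1)$ is central, and its eigenspaces partition $\Delta_n$ into the two inequivalent irreducible $\mathrm{Cl}(n-1)$-summands. Because $\omega_{N_0}$ anti-commutes with $V_{\mathrm{vol}}$ (again from $n-1$ odd), no nonzero eigenvector of $\omega_{N_0}$ can also be an eigenvector of $V_{\mathrm{vol}}$; hence $c^+ \in V_+$ has nontrivial projection onto \emph{each} irreducible, and its $\mathrm{Cl}(n-1)$-orbit therefore spans all of $\Delta_n$. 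This forces $L = \mathbb{C}^m$, so every $s_\alpha$ vanishes identically on $\Omega$, contradicting the fact already established that $\sigma = \sum_\alpha s_\alpha \otimes \bar{s}_\alpha$ is nowhere zero.

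The main obstacle will be the representation-theoretic cyclicity step: extracting exactly the right algebraic statement that a nonzero $\omega_{N_0}$-eigenvector generates all of $\Delta_n$ under the $N_0^\perp$-Clifford action. Everything else reduces to bookkeeping with Clifford anti-commutators together with the surjectivity of the Gauss map provided by strict convexity.
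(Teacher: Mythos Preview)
Your argument is correct, but it takes a longer route than the paper's. The essential difference is in how much Clifford invariance of $L$ you extract from the boundary condition. You first pass to an $\omega_{N_0}$-eigenvector $c^+$, and then the boundary identity only yields $\omega_{N^\perp}c^+\in L$ for $N^\perp\in N_0^\perp$; this forces you into the $\mathrm{Cl}(n-1)$ cyclicity step at the end. The paper instead keeps the undecomposed $c\in L$ and computes, using the Clifford relation $\omega_N\omega_{N_0}+\omega_{N_0}\omega_N=2\langle N,N_0\rangle$ together with $\omega_N s=-\epsilon c(\nu)s$, that
\[
\langle \omega_N c,(1\pm\omega_{N_0})s\rangle
=\langle c,(1\mp\omega_{N_0})\omega_N s\rangle\pm 2\langle N,N_0\rangle\langle c,s\rangle
=-\epsilon c(\nu)\,\langle c,(1\mp\omega_{N_0})s\rangle=0
\]
at any boundary point with normal $N$, and hence (by the same ODE/unique-continuation step you use) everywhere. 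Thus $\omega_N c\in L$ for \emph{every} boundary normal $N$, not just for the $N_0^\perp$-projection. Since the normals of a compact strictly convex domain span $\mathbb{R}^n$ and the $\omega_N$ generate the full $\mathrm{Cl}(n)\otimes\mathbb{C}\cong\mathrm{End}(\mathbb{C}^m)$ (here $n$ is even), $L$ is $\mathrm{End}(\mathbb{C}^m)$-invariant and so $L\in\{0,\mathbb{C}^m\}$; the representation-theoretic analysis of $\mathrm{Cl}(n-1)$-modules is then unnecessary. Your approach has the merit of making the role of the Killing-spinor unique continuation very explicit, but the paper's trick of pairing with $(1\pm\omega_{N_0})s$ before invoking the boundary condition is what buys the extra direction and shortens the endgame.
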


\begin{remark}
Note that $\b{\epsilon} \bar{c}\left(N_0\right) \in \operatorname{End}\left(S_{\Omega_\delta}\right)$, which is Hermitian symmetric and $\left(\bar{\epsilon} \bar{c}\left(N_0\right)\right)^2=\mathrm{id}$. We can choose the basis  $\left\{{\bar{s}}_\alpha\right\}_{1 \leq \alpha \leq m}$  of spinors in $S_{\Omega_{\delta}}$ such that $\{\bar{s}_{\alpha}\}_{1 \le \alpha \le m}$ are the the eigenvectors of $\bar{\epsilon} \bar{c}\left(N_0\right)$, that is, they satisfy
$$
\bar{\epsilon}\b{c}\left(N_0\right){\bar{s}}_\alpha=\lambda_\alpha {\bar{s}}_\alpha, \quad 1 \leq \alpha \leq m,
$$
where
$$
\lambda_\alpha= \begin{cases}1 & 1 \leq \alpha \leq \frac{m}{2} \\ -1 & \frac{m}{2}+1 \leq \alpha \leq m .\end{cases}
$$
Then
$$
\left(\omega_{N_0} {s}\right)_\alpha=\omega_{N_0 \alpha \beta} {s}_\beta=\left\langle\bar{\epsilon} \bar{c}\left(N_0\right) {\bar{s}}_\beta, {\bar{s}}_\alpha\right\rangle {s}_\beta =\left\langle {\bar{s}}_\beta, \bar{\epsilon}\bar{ c}\left(N_0\right){\bar{s}}_\alpha\right\rangle {s}_\beta=\lambda_\alpha {s}_\alpha .
$$
Hence $L$ can be written as
$$
L=\left\{c \in \mathbb{C}^m: \sum_{\alpha=1}^{\frac{m}{2}} \bar{c}_\alpha {s}_\alpha=0=\sum_{\beta=\frac{m}{2}+1}^m \bar{c}_\beta {s}_\beta \text{ everywhere on } \Omega\right\}.
$$
Then $L=\{0\}$ if and only if each of the following two sets
\begin{equation}
S_1=\{{s}_1, \cdots, {s}_{\frac{m}{2}}\}, \quad S_2=\{{s}_{\frac{m}{2}+1}, \cdots, {s}_m\} \label{S1 and S2}
\end{equation}
is a linearly independent set at each point of $\Omega$.
  \end{remark}

\begin{proof}[Proof of Lemma \ref{L vanishes}]
  First, we note that if $\langle c, (1 \pm \omega_{N_0}) s \rangle$ vanishes
  at some point of $\Omega$, then it vanishes on all of $\Omega$. Indeed,
\begin{align}
& \nabla_{e_i} \langle c, (1 \pm \omega_{N_0}) s \rangle \\
= & \langle c, (1 \pm \omega_{N_0}) \nabla_{e_i} s \rangle \\
= & \tfrac{1}{2} \langle c, (1 \pm \omega_{N_0}) \epsilon c (e_i)
\omega_{N_0} s \rangle \\
= & \tfrac{1}{2} \epsilon c (e_i) \langle c, (1 \pm \omega_{N_0})
\omega_{N_0} s \rangle \\
= & \tfrac{1}{2} \epsilon c (e_i) \langle c, (\omega_{N_0} \pm 1) s
\rangle = 0.
\end{align}
  By an ODE argument, $\langle c, (1 \pm \omega_{N_0}) s
  \rangle$ vanishes everywhere on $\Omega$. Let $x_0 \in \partial \Omega$ be a
  point such that its inward-pointing unit normal is $N$. For any $c \in L$,
  we have
\begin{align}
& \langle \omega_N c, (1 \pm \omega_{N_0}) s \rangle \\
= & \langle c, \omega_N (1 \pm \omega_{N_0}) s \rangle \\
= & \langle c, (\omega_N \pm (2 \langle N, N_0 \rangle - \omega_N
\omega_{N_0})) s \rangle \\
= & \langle c, (1 \mp \omega_{N_0}) \omega_N s \rangle \pm \langle N, N_0
\rangle \langle c, s \rangle \\
= & \langle c, (1 \mp \omega_{N_0}) \omega_N s \rangle
\end{align}
  where we have used that $\langle c, s \rangle = 0$. Using the boundary
  condition $\epsilon c (\nu) \omega_N s = - s$, we have $\omega_N s = -
  \epsilon c (\nu) s$ and
  \[ \langle \omega_N c, (1 \pm \omega_{N_0}) s \rangle = \langle c, (1 \mp
     \omega_{N_0}) \omega_N s \rangle = - \epsilon c (\nu) \langle c, (1 \pm
     \omega_{N_0}) s\rangle = 0 \]
  at $x_0$. Arguing similarly as before, $\langle \omega_N c, (1 \pm
  \omega_{N_0}) s \rangle = 0$ on all of $\Omega$, so $\omega_N c\in L$. Since the linear span of
  the unit normals of $\partial \Omega$ is $\mathbb{R}^n$ and
  $\ensuremath{\operatorname{End}} (\mathbb{C}^m)$ is generated by all such
  $\omega_N$, $L$ is invariant under all $\ensuremath{\operatorname{End}}
  (\mathbb{C}^m)$. It implies that $L$ is either $\{0\}$ or $\mathbb{C}^m$.
  The latter is impossible since $s$ has at least one non-zero component.
\end{proof}
Next, we show that the two sets $S_1$ and $S_2$ are orthogonal.
\begin{lemma} \label{cross orthogonal}
For any ${s}_\alpha\in S_1$ and ${s}_\beta\in S_2$, $\langle s_{\alpha}, s_{\beta} \rangle =0$.
  \end{lemma}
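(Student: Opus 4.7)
The plan is to reduce the pointwise orthogonality $\langle s_\alpha,s_\beta\rangle\equiv 0$ (for $s_\alpha\in S_1$, $s_\beta\in S_2$) to two independent facts: the function $x\mapsto \langle s_\alpha(x),s_\beta(x)\rangle$ is locally constant on $\Omega$, and it vanishes at one carefully chosen boundary point of $\partial\Omega$. The ingredients are the decoupled spinor equation produced by diagonalizing $\omega_{N_0}$, the self-adjointness of $\epsilon c(\xi)$, and the strict Euclidean convexity of $\Omega$.

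For the constancy, I would use that in the chosen basis $\omega_{N_0}$ is diagonal with entries $\lambda_\alpha=+1$ for $\alpha\in S_1$ and $\lambda_\alpha=-1$ for $\alpha\in S_2$, so equation \eqref{eqn-10} decouples into $\nabla_\xi s_\alpha=\tfrac{1}{2}\lambda_\alpha\,\epsilon\,c(\xi)\,s_\alpha$. Since $c(\xi)$ is skew-Hermitian and anti-commutes with the Hermitian grading $\epsilon$, the composition $\epsilon c(\xi)$ is self-adjoint on spinors. Differentiating the Hermitian pairing,
$$\nabla_\xi\langle s_\alpha,s_\beta\rangle=\tfrac12\lambda_\alpha\langle\epsilon c(\xi)s_\alpha,s_\beta\rangle+\tfrac12\lambda_\beta\langle s_\alpha,\epsilon c(\xi)s_\beta\rangle=\tfrac12(\lambda_\alpha+\lambda_\beta)\langle s_\alpha,\epsilon c(\xi)s_\beta\rangle=0,$$
since $\lambda_\alpha+\lambda_\beta=0$. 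Hence $\langle s_\alpha,s_\beta\rangle$ is constant on each component of $\Omega$.

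To show that the constant is zero, I would exploit the strict Euclidean convexity of $\Omega\subset(\mathbb{R}^n_+,\delta)$: the Euclidean Gauss map $\partial\Omega\to S^{n-1}$ is a diffeomorphism, so there exists $x_0\in\partial\Omega$ whose Euclidean inward unit normal is $N_0=\partial/\partial x^1$ (for example, a point of $\partial\Omega$ realizing the minimum of $x^1$). At this point $\omega_N(x_0)=\omega_{N_0}$ is diagonal, so rewriting the local boundary condition \eqref{bc brendle} as $\omega_N s=-\epsilon c(\nu) s$ and reading off the $\alpha$-th component yields $\epsilon c(\nu)s_\alpha=-\lambda_\alpha s_\alpha$ at $x_0$. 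Therefore $s_\alpha(x_0)$ and $s_\beta(x_0)$ sit in the $\mp 1$ eigenspaces of the self-adjoint involution $\epsilon c(\nu)|_{x_0}$ respectively, so they are orthogonal there, giving $\langle s_\alpha,s_\beta\rangle(x_0)=0$. Combined with the constancy of the previous step, this forces $\langle s_\alpha,s_\beta\rangle\equiv 0$ on $\Omega$.

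The only mildly delicate ingredient is the selection of the point $x_0$ with $N(x_0)=N_0$, which is a standard consequence of strict Euclidean convexity and is not a genuine obstacle. The conceptual point of the proof — turning the tangled boundary condition \eqref{bc brendle} at a single well-chosen point into simple eigenvalue equations by making $\omega_N$ agree with the diagonal $\omega_{N_0}$ — is what drives the argument and is what one should make sure to state cleanly.
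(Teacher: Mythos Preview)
Your proof is correct and follows essentially the same approach as the paper: first showing $\langle s_\alpha,s_\beta\rangle$ is constant via the opposite-sign Killing equations and self-adjointness of $\epsilon c(\xi)$, then evaluating at a boundary point where $N=N_0$ to conclude the constant vanishes. The paper's write-up is nearly identical, choosing the same special point and using the boundary condition in the same way.
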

  \begin{proof}
For any ${s}_\alpha\in S_1$ and ${s}_\beta\in S_2$, 
\begin{align}
\begin{split}
{\xi}\left\langle {s}_\alpha,{s}_\beta\right\rangle&=\left\langle \n_{{\xi}}{s}_\alpha,{s}_\beta\right\rangle+\left\langle {s}_\alpha,\n_{{\xi}}{s}_\beta\right\rangle\\
  &=\left\langle \frac{1}{2}{\epsilon}{c}({\xi}){s}_\alpha,{s}_\beta\right\rangle+\left\langle {s}_\alpha,-\frac{1}{2}{\epsilon}{c}({\xi}){s}_\beta\right\rangle\\
  &=0,
 \end{split}
\end{align}
which follows that $\left\langle {s}_\alpha,{s}_\beta\right\rangle$ is a constant in $\Omega$. 
Let $x_0\in \partial\Omega$ be the point such that its inward-pointing unit normal vector is $N=N_0$. Then 
\begin{align}
\begin{split}
  \left\langle {s}_\alpha,{s}_\beta\right\rangle&= \left\langle {s}_\alpha,-{\epsilon}{c}(\nu)(\omega_{N_0}{s})_\beta\right\rangle\\
  &= \left\langle {s}_\alpha,{\epsilon}{c}(\nu){s}_\beta\right\rangle\\
  &=\left\langle {\epsilon}{c}(\nu){s}_\alpha,{s}_\beta\right\rangle\\
  &=\left\langle {\epsilon}{c}(\nu)(\omega_{N_0}{s})_\alpha,{s}_\beta\right\rangle\\
  &=\left\langle -{s}_\alpha,{s}_\beta\right\rangle,
 \end{split}
\end{align}
which implies that $ \left\langle {s}_\alpha,{s}_\beta\right\rangle(x_0)=0$, and so 
\begin{equation}
  \left\langle {s}_\alpha,{s}_\beta\right\rangle\equiv0
\end{equation}
on all of $\Omega$.
\end{proof}

\begin{proof}[Proof of Proposition \ref{full independence}]
  It follows from Lemma \ref{L vanishes} that $S_1$ and $S_2$ defined in \ref{S1 and S2} are respectively linearly independent. It follows from Lemma \ref{cross orthogonal} that $S_1 \cup S_2$ are linearly independent. So the components of $s$ are linearly independent.
  \end{proof}

  Now we are ready to prove Theorem \ref{hyperbolic Llarull}.

  \begin{proof}[Proof of Theorem \ref{hyperbolic Llarull}]
For any $x\in \Omega$, let $\{e_i\}_{1\leq i\leq n}$ be an orthonormal basis of $(T_x\Omega,g)$, by \eqref{eqn-10}, we have
$$
\begin{aligned}
0= & \nabla_{e_k}\left(\nabla_{e_l} {s}-\frac{1}{2} \omega_{N_0} {\epsilon}{c}\left( e_l\right) {s}\right)-\left(\nabla_{\nabla_{e_k} e_l} s-\frac{1}{2} \omega_{N_0} {\epsilon}{c}\left( \nabla_{e_k} e_l\right) {s}\right) \\
& +\frac{1}{2} {\epsilon}{c}\left( e_l\right) \omega_{N_0}\left(\nabla_{e_k} {s}-\frac{1}{2} \omega_{N_0} {\epsilon}{c}\left(e_q\right) s\right) \\
= & \nabla_{e_k} \nabla_{e_l}{ s}-\nabla_{\nabla_{e_k} e_l} {s}+\frac{1}{4} {c}\left(e_l\right) {c}\left(e_q\right) {s}.
\end{aligned}
$$
This implies
$$
\begin{aligned}
0= & \nabla_{e_k} \nabla_{e_l} {s}-\nabla_{e_l} \nabla_{e_k} {s}-\nabla_{\left[e_k, e_l\right]} {s} \\
& -\frac{1}{4} \sum_{i, j=1}^n\left(\delta_{i k} \delta_{j l}-\delta_{i l} \delta_{j k}\right) {c}\left(e_i\right) {c}\left(e_j\right){ s}.
\end{aligned}
$$
Hence
$$
\begin{aligned}
0 & =-\frac{1}{4} \sum_{i, j=1}^n\left(-\left\langle R\left(e_k, e_l\right) e_i, e_j\right\rangle+\left(\delta_{i k} \delta_{j l}-\delta_{i l} \delta_{j k}\right)\right) {c}\left(e_i\right) {c}\left(e_j\right){ s} \\
& =-\frac{1}{4} \sum_{i, j=1}^n\left(R\left(e_i, e_j, e_k, e_l\right)+\left(\delta_{i k} \delta_{j l}-\delta_{i l} \delta_{j k}\right)\right) {c}\left(e_i\right){ c}\left(e_j\right) {s},
\end{aligned}
$$
which implies that
$$
\sum_{i, j=1}^n\left(R\left(e_i, e_j, e_k, e_l\right)+\left(\delta_{i k} \delta_{j l}-\delta_{i l} \delta_{j k}\right)\right) {c}\left(e_i\right) {c}\left(e_j\right) {s}_\mu=0
$$
for any $1\leq \mu\leq m$.
Note that $\{{s}_\alpha\}_{1\leq \alpha\leq m}$ is a basis of $S_{\Omega_g}$ by 
Proposition \ref{full independence}, then 
\begin{equation}
  {c}\left(\sum_{i, j=1}^n\left(R\left(e_i, e_j, e_k, e_l\right)+\left(\delta_{i k} \delta_{j l}-\delta_{i l} \delta_{j k}\right)\right)e_i\cdot e_j \right)=0\in \mathrm{End}(S_{\Omega_g}|_{x}).
\end{equation}
For $n$ is even, the representation ${c}:\mathrm{Cl}(T_x\Omega,\mb{C})\to \mathrm{End}(S_{\Omega_g}|_{x})$ is an isomorphism, see for example \cite[Theorem 1.28]{BHMMM}. It follows that
$$R\left(e_i, e_j, e_k, e_l\right)+\left(\delta_{i k} \delta_{j l}-\delta_{i l} \delta_{j k}\right)=0,$$
that is, $(\Omega,g)$ is hyperbolic. The proof of Theorem \ref{hyperbolic Llarull} is complete.
\end{proof}

\section{Scalar curvature rigidity for polytopes in the hyperbolic space}\label{sec:dihedral}

In this section, we briefly introduce
the smoothing procedures of Brendle \cite{brendle-scalar-2024}, which we use 
to find a non-zero solution to \eqref{eqn-10} subject to the boundary condition \eqref{bc brendle}.
This leads to a proof of Theorem \ref{intersect rigidity} assuming matching angles.

\subsection{Smoothing}

Let $\Omega$ be a compact convex polytope in $\mathbb{R}^n$ with non-empty
interior. We write $\Omega = \cap_{\ell = 1} \{u_{\ell} \leq 0\}$, where
$u_{\ell}$, $\ell \in \Lambda$ is a finite collection of non-constant linear
functions defined on $\mathbb{R}^n$. Without loss of generality, we can assume
that for each $\ell_0 \in \Lambda$, the set $\{u_{\ell_0} > 0\} \cap (\cap_{\ell \in
\Lambda\backslash\{\ell_0 \}} \{u_{\ell} \leq 0\})$ is non-empty. For
sufficiently large $\lambda$, Brendle's smoothing \cite{brendle-scalar-2024} of $\Omega$ is given by
\begin{equation} \Omega_{\lambda} = \left\{ \sum_{\ell} e^{\lambda u_{\ell}} \leq 1
  \right\}\label{brendle smoothing explicit}
\end{equation}
for sufficiently large $\lambda>0$.
 Let $N_{\lambda}:\partial\Omega_{\lambda}\to \mathbb{S}^{n-1}$ be given by
 \begin{equation}
 N_{\lambda} = \sum_{\ell \in \Lambda} e^{\lambda u_{\ell}} | \nabla
  u_{\ell} | \nu_\ell \left| \sum_{\ell \in \Lambda} e^{\lambda u_{\ell}} |
  \nabla u_{\ell} | \nu_\ell \right|^{- 1} \label{homotopic N}.
\end{equation}
It is clear that $N_{\lambda}$ is homotopic to the Euclidean Gauss map of 
$\partial\Omega_{\lambda}$ by simply deforming the metric ${g}$ to the flat metric.

The map $N_{\lambda} : \partial \Omega_{\lambda} \to \mathbb{S}^{n - 1}$ is homotopic
to the Euclidean Gauss map of $\partial \Omega_{\lambda}$, we define the 
bundle map $ \chi_{\lambda}$ as
\begin{equation}
  \chi_{\lambda} \sigma : = ({\epsilon} \otimes \bar{\epsilon}) \cdot ({c} ({e}_n) \otimes \bar{c} (N_{\lambda})) \sigma . \label{chi lambda}
\end{equation}
It is easy to check that $\chi_{\lambda}$ is self-adjoint and $\chi_{\lambda}^2$ is the identity
map.

\subsection{Integral formula with an N homotopic to the Euclidean
Gauss map}
In the hyperbolic case \(\psi(x^1)=1/x^1\), so \(\psi'/\psi^2=-1\) and
\begin{equation}
  \Psi = - \tfrac{n}{2} ({\epsilon} \otimes \bar{\epsilon}) \cdot (1
  \otimes \bar{c} (\partial_{x^1})),\quad \partial_{x^1}:=\tfrac{\partial}{\partial x^1}. \label{psi in dihedral}
\end{equation}
Using $\psi (x^1)=1/x^1$ in the proof of Proposition \ref{Dirac Witten}, we obtain
\begin{align}
  & \int_{\Omega_{\lambda}} |\hat{D}\sigma |^2 \\
  = &  \int_{\Omega_{\lambda}} |\hat{\nabla} \sigma |^2 +
  \tfrac{1}{4} \int_{\Omega_{\lambda}} (R_{{g}} + n (n - 1)) |
  \sigma |^2 \label{hyperbolic prelim} \\
  & \quad + \int_{\partial \Omega_{\lambda}} \left[  \langle
  ({c} ({e}_n) D + \nabla_{{e}_n}) \sigma, \sigma \rangle +
 \tfrac{n-1}{n} \langle {c} ({e}_n) \Psi \sigma, \sigma \rangle \right] . 
\end{align}
Now we assume that $\sigma$ satisfies the following boundary condition
\begin{equation}
  \chi_{\lambda} \sigma = - \sigma \label{homotopic boundary condition}
\end{equation}
where $\chi_{\lambda}$ is defined in \eqref{chi lambda}.

Let $Y$ be a Euclidean vector field at a point $x_0 \in \partial
\Omega_{\lambda}$ such that $Y$ is orthogonal to $N_{\lambda}$ at $x_0$. Since
$({\epsilon} \otimes \bar{\epsilon}) \cdot ({c} ({e}_n) \otimes \bar{c}
(N_{\lambda}))$ and $({\epsilon} \otimes \bar{\epsilon}) \cdot ({c} ({e}_n)
\otimes \bar{c} (Y))$ are both self-adjoint and anti-commute with each other,
\[ \langle ({\epsilon} \otimes \bar{\epsilon}) \cdot ({c} ({e}_n)
   \otimes \bar{c} (Y)) \sigma, \sigma \rangle = 0 \]
at $x_0$. Using the above, \eqref{psi in dihedral} and \eqref{homotopic
boundary condition}, we have
\begin{align}
  & \langle {c} ({e}_n) \Psi \sigma, \sigma \rangle \\
  = & - \tfrac{n}{2} \langle {c} ({e}_n) ({\epsilon} \otimes
  \bar{\epsilon}) \cdot (1 \otimes \bar{c} (\partial_{x^1})), \sigma \rangle \\
  = & - \tfrac{n}{2} \langle {c} ({e}_n) ({\epsilon} \otimes
  \bar{\epsilon}) \cdot (1 \otimes \bar{c} (\langle \partial_{x^1}, N_\lambda \rangle N_\lambda)), \sigma
  \rangle \\
  = & \tfrac{n}{2} \langle \partial_{x^1}, N_\lambda \rangle \langle {c} ({e}_n)
  ({\epsilon} \otimes \bar{\epsilon}) \cdot (1 \otimes \bar{c} (N_\lambda)),
  ({\epsilon} \otimes \bar{\epsilon}) \cdot ({c} ({e}_n) \otimes \bar{c}
  (N_\lambda)) \sigma \rangle \\
  = & - \tfrac{n}{2} \langle \partial_{x^1}, N_\lambda \rangle . \label{hyp psi term}
\end{align}
Now we define some auxiliary connections
\begin{equation}
  \bar{\nabla}^{\partial}_{e_j} = \bar{\nabla}_{e_j} + \tfrac{1}{2} \bar{c}
  (\bar{\nabla}_{e_j} N_\lambda) \bar{c} (N_\lambda), \text{ } \nabla^{g, \partial}_{e_j} =
  \nabla_{e_j} + \tfrac{1}{2} c (\nabla_{e_j} e_n) c (e_n), \label{homotopic connection}
\end{equation}
where $\bar{\nabla}$ is the Levi-Civita connection on the spinors or vector fields with respect
to the Euclidean metric and $\nabla$ denotes the connection on the spinors in
$S_{\Omega_g}$. 
Similar to the boundary Dirac operator \eqref{boundary dirac operator}, we define
\begin{equation}
  \nabla^{\partial} = \nabla^{g, \partial} \otimes 1 + 1 \otimes
  \bar{\nabla}^{\partial}, \text{ } \tilde{D}^{\partial} = \sum_{j = 1}^{n -
  1} c (e_n) c (e_j) \tilde{\nabla}^{\partial}_{e_j} .
\end{equation}
It is tedious but
direct to check that $\tilde{D}^{\partial}$ and $\chi$ anti-commute. Hence
$\langle \tilde{D}^{\partial} \sigma, \sigma \rangle = 0$ since $\sigma$
satisfies \eqref{homotopic boundary condition}. We see
\begin{align}
  & \langle c (e_n) D \sigma + \nabla_{e_n} \sigma, \sigma \rangle
  \nonumber\\
  = & \sum_{i = 1}^{n - 1} \langle c (e_n) c (e_i) \nabla_{e_i} \sigma, \sigma
  \rangle \nonumber\\
  = & \langle \tilde{D}^{\partial} \sigma, \sigma \rangle - \tfrac{1}{2}
  \sum_{i = 1}^{n - 1} \langle c (e_n) c (e_i) \bar{c} (\mathrm{d} N_\lambda (e_i))
  \bar{c} (N_\lambda) \sigma, \sigma \rangle \nonumber\\
  & \quad - \tfrac{1}{2} \sum_{i = 1}^{n - 1} \langle c (e_n) c (e_i) c
  (\nabla_{e_i} e_n) c (e_n) \sigma, \sigma \rangle \nonumber\\
  = & \tfrac{1}{2} H | \sigma |^2 - \tfrac{1}{2} \sum_{i, j = 1}^{n - 1}
  \langle \mathrm{d} N_\lambda (e_i), \tilde{e}_j \rangle \langle c (e_n) c (e_i)
  \bar{c} (\tilde{e}_j) \bar{c} (N_\lambda) \sigma, \sigma \rangle, \nonumber
\end{align}
where $\{\tilde{e}_{j}\}_{j=1,2,\ldots,n-1}$ is an orthonormal basis of the subspace orthogonal to
$N_\lambda$. Let $q_i \geqslant 0$ be the \textit{singular values} of the differential
$\mathrm{d} N_\lambda : T_{x_0} \partial \Omega_{\lambda} \to T_{N_\lambda (x_0)}
\mathbb{S}^{n - 1}$ of $N$, that is, we can choose an orthonormal basis of
$T_{x_0} \partial \Omega_{\lambda}$ and $T_{N_\lambda (x_0)} \mathbb{S}^{n - 1}$ such that
\[  \mathrm{d}N_\lambda(e_i) = \bar{\nabla}_{e_{i}} N_\lambda = q_{i} \tilde{e}_{i} .\]
Also, note that we have used $\bar{\nabla} N_\lambda = \mathrm{d} N_\lambda$.
The \textit{trace norm} $\|\mathrm{d}N_\lambda\|_{\mathrm{tr}}$ of $\mathrm{d}N_\lambda$ is given by
\[\|\mathrm{d}N_\lambda\|_{\mathrm{tr}} = \sum_{i=1}^{n-1}q_i.\]
Hence,
\begin{align}
  & \langle (c (e_n) D + \nabla_{e_n}) \sigma, \sigma \rangle \\
  = & \tfrac{1}{2} H| \sigma |^2 - \sum_{j = 1}^{n - 1} \tfrac{1}{2} q_{j} 
  \langle c (e_n) c (e_j) c (\tilde{e}_j) c (N_\lambda) \sigma, \sigma \rangle, \\
  \geq & \tfrac{1}{2} H| \sigma |^2 - \tfrac{1}{2} \sum_{j = 1}^{n - 1} q_j |
  \sigma |^2 \\
  = & \tfrac{1}{2} (H -\| \mathrm{d} N_\lambda\|_{\mathrm{tr}}) | \sigma |^2 . 
\end{align}
Putting \eqref{hyp psi term} and the above into \eqref{hyperbolic prelim}, we
obtain
\begin{align} \label{sl for polytope epsilon}
   \int_{\Omega_{\lambda}} | \hat{D} \sigma |^2 \geq &  \int_{\Omega_{\lambda}} | \hat{\nabla} \sigma |^2
  + \tfrac{1}{4 } \int_{\Omega_{\lambda}} (R_{{g}} + n (n - 1)) |
  \sigma |^2 \\
  & \quad + \tfrac{1}{2} \int_{\partial \Omega_{\lambda}} (H - (n -
  1) \langle \partial_{x^1}, N_\lambda \rangle -\| \mathrm{d}
  N_\lambda\|_{\ensuremath{\operatorname{tr}}}) | \sigma |^2 . 
\end{align}
It is worth to remark that $N_{\lambda}$ could be replaced with any map from $N':\partial \Omega_{\lambda}\to \mathbb{S}^{n-1}$, in particular, the Gauss map of $\partial \Omega_{\lambda}$ itself.
By index theory (see \eqref{index theory}) and using the deformation invariance of the Fredholm index, we can find a non-zero section $\sigma^{(\lambda)}$
defined on $\Omega_{\lambda}$ which satisfies
\[ \hat{D} \sigma^{(\lambda)} = 0 \text{ in } \Omega_{\lambda}, \text{ }
   ({\epsilon} \otimes \bar{\epsilon}) ({c}({e}_n) \otimes \bar{c}(N_{\lambda}))
   \sigma^{(\lambda)} = - \sigma^{(\lambda)} \text{ along } \partial \Omega_{\lambda} .
\]
Using the same argument in Brendle \cite{brendle-scalar-2024} (replacing $H$ by $H - (n-1)\mathrm{d}x^1(N)$), we can show there exists a subsequence $\{\lambda_l\}_{l\in \mathbb{N}}$ such that $\sigma^{(\lambda_{l})}$ converge in
$C^{\infty}_{\ensuremath{\operatorname{loc}}}$  away from the edges of $\Omega$ to
a non-zero section $\sigma$, $N_{\lambda_{l}}$ converge to $N$ and 
\begin{equation}
 \hat{D} \sigma =0 \text{ in } \Omega \label{imaginary dirac}
\end{equation}
subject to the boundary condition
\begin{equation}
  ({\epsilon} \otimes \bar{\epsilon}) ({c} ({\nu}) \otimes \bar{c} (N)) \sigma = - \sigma \text{ along }\partial{\Omega} . \label{boundary condition}
\end{equation}
The curcial inequality \eqref{sl for polytope epsilon} is also preserved in the limit (the right hand is uniformly bounded below), so using the scalar curvature bound $R_g \geqslant-n(n-1)$ and the mean curvature bound $H_g \geq (n-1) \langle N_0, N \rangle$ along a face $F$ in the polytope $\Omega$ (assumptions of Theorem \ref{intersect rigidity}), we conclude that the limit $\sigma$ must satisfy
\begin{equation}
 \hat{\nabla}_{\xi} \sigma =  \nabla_{{\xi}} \sigma - \tfrac{1}{2} ({\epsilon} \otimes
  \bar{\epsilon}) ({c} ({\xi}) \otimes \bar{c} (N_{0})) \sigma = 0 \text{ in }\Omega,
  \label{killing}
\end{equation}
which is stronger than \eqref{imaginary dirac}. 
Due to \eqref{killing}, $\sigma$ can be extended continuously to $\Omega$.

We fix a basis $\{\bar{s}_{\alpha}\}$ of $S_{\Omega_{\delta}}$, then $\sigma = \sum_{\alpha} s_{\alpha} \otimes \bar{s}_{\alpha}$ for some $\{s_{\alpha}\}_{1 \leqslant \alpha \leqslant m}$, and the $m$-tuple of spinors $s = (s_1,\ldots, s_{\alpha})$ satisfies
\begin{equation}\label{eqn-10 s}
  \nabla_{{\xi}}{s}-\frac{1}{2}\epsilon{c}({\xi})\omega_{N_0}{s}=0, 
\end{equation}
subject to the boundary condition $({\epsilon}\otimes \bar{\epsilon})({c}(\nu)\otimes \bar{c}(N))\sigma=-\sigma$, which is equivalent to 
\begin{equation}
  {\epsilon}{c}(\nu)\omega_N({s})=-{s}, \label{bc brendle s}
\end{equation}
As before, the $m$-tuple of spinors $s$ are easier to work with,
and the equations \eqref{eqn-10 s} and \eqref{bc brendle s} are the starting point of our analysis.

\subsection{Linear independence of components of s}

\

To show that $(\Omega, g)$ is hyperbolic as in the Proof of Theorem
\ref{hyperbolic Llarull}, we need that the components of $s$ are linearly
independent which in turn relies on Lemma \ref{cross orthogonal}. We have the
following proposition which is a restatement of Lemma \ref{cross orthogonal}
since $\omega_{N_0}$ is self-adjoint with respect to the formal inner product
given in \eqref{c s}. However, here $\Omega$ is a polytope and it is not
always true we can find a face $F$ with its normal $N = \pm N_0$. To achieve
Proposition \ref{lm:goal}, we use a simple geometric argument.

From now on, to avoid double levels of angular brackets, we use $\langle c_1,
s_1 \rangle \cdot \langle c_1, s_2 \rangle$ to denote $\langle \langle c_1,
s_1 \rangle, \langle c_2, s_2 \rangle \rangle$ when there is no confusion.

\begin{lemma}
  \label{4.8}Let $F$ be a face of $\Omega$ and $N$ be its Euclidean normal.
  Then for any $c_i \in \mathbb{C}^m$, $i = 1, 2$ that
  \begin{equation}
    \langle c_1, (1 + \omega_N) s \rangle \cdot \langle c_2, (1 - \omega_N) s
    \rangle = 0 \label{cor from bv}
  \end{equation}
  along $F$.
\end{lemma}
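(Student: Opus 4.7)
The plan is to reduce the claimed identity on $F$ to the orthogonality of the two eigenspaces of the Hermitian involution $\epsilon c(\nu)$ acting on $S_{\Omega_g}$. The essential ingredient is the boundary condition \eqref{bc brendle s}, namely $\epsilon c(\nu)\omega_N s=-s$, which along $F$ lets one trade the right-factor operator $\omega_N$ (acting on the $S_{\Omega_\delta}$-indices) for the left-factor operator $\epsilon c(\nu)$ (acting on the spinors $s_\alpha\in S_{\Omega_g}$).

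First I record that $\epsilon c(\nu)$ is a self-adjoint involution on $S_{\Omega_g}$. Indeed $\epsilon^2=1$, $c(\nu)^2=-1$, and in even dimension $\epsilon$ anti-commutes with $c(\nu)$, so $(\epsilon c(\nu))^2=1$ and $(\epsilon c(\nu))^*=\epsilon c(\nu)$. Multiplying the boundary condition by $\epsilon c(\nu)$ then yields $\omega_N s=-\epsilon c(\nu)s$ along $F$, and hence
\[(1+\omega_N)s=(1-\epsilon c(\nu))s\quad\text{and}\quad (1-\omega_N)s=(1+\epsilon c(\nu))s\quad\text{on }F.\]
Because $c_i\in\mathbb{C}^m$ is constant and $c(\nu)$ acts only on the left tensor factor where the $s_\alpha$ live, it commutes through the formal pairing $\langle c_i,\cdot\rangle$, giving
\[\langle c_1,(1+\omega_N)s\rangle=(1-\epsilon c(\nu))\langle c_1,s\rangle,\qquad \langle c_2,(1-\omega_N)s\rangle=(1+\epsilon c(\nu))\langle c_2,s\rangle.\]

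Finally I pair these two $S_{\Omega_g}$-valued spinors and shift $1-\epsilon c(\nu)$ across via self-adjointness, which leaves the operator $(1-\epsilon c(\nu))(1+\epsilon c(\nu))=1-(\epsilon c(\nu))^2=0$ inside the remaining inner product. The claimed product therefore vanishes identically along $F$. The only point that deserves care is the sign and convention check for $\epsilon c(\nu)$ in the paper's spin conventions, which is routine; notably no global geometry, no matching-angle hypothesis, and no appeal to the Dirac-type equation \eqref{eqn-10 s} is required, so Lemma \ref{4.8} is essentially an algebraic consequence of the boundary condition alone. I expect this face-by-face orthogonality to be combined downstream with interaction arguments across adjacent faces (where the matching-angle hypothesis finally enters) in order to globalize into a linear independence statement for $s$ in the spirit of Proposition \ref{full independence}.
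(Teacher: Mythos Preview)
Your proof is correct and follows exactly the approach the paper indicates: use the boundary condition to rewrite $\omega_N s = -\epsilon c(\nu)s$, then exploit that $\epsilon c(\nu)$ is a self-adjoint involution so that $(1-\epsilon c(\nu))(1+\epsilon c(\nu))=0$. Your closing remarks about where the matching-angle hypothesis enters are speculative and not part of the proof; in fact that hypothesis is used in the smoothing step rather than in combining adjacent faces, but this does not affect the correctness of your argument for Lemma~\ref{4.8}.
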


\begin{proof}
  The proof is a direct calculation by using $\omega_N s = - \epsilon c
  (\nu) s$, $(\epsilon c (\nu))^2 = 1$ and that $\epsilon c (\nu)$ is
  self-adjoint.
\end{proof}

\begin{proposition}
  \label{lm:goal}Let $c_i \in \mathbb{C}^m$, $i = 1, 2$ be the vectors 
  satisfying $\omega_{N_0} c_i = (- 1)^{i - 1} c_i$, then
  \begin{equation}
    \langle c_1, s \rangle \cdot \langle c_2, s \rangle = 0. \label{goal}
  \end{equation}
  Moreover, each component of $s$ is of type I, see Definition \ref{defn-typeIII} for the definition of type I spinors.
\end{proposition}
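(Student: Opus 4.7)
The plan is to prove \eqref{goal} in two steps: first show that $\langle c_1, s \rangle \cdot \langle c_2, s \rangle$ is a constant function on $\Omega$, then exhibit a boundary point at which this constant equals zero.

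For the first step, for any $\xi \in T\Omega$ equation \eqref{eqn-10 s} together with $\omega_{N_0} c_i = (-1)^{i-1} c_i$ gives
\[ \nabla_\xi \langle c_i, s \rangle \;=\; \tfrac{1}{2}\, \epsilon\, c(\xi)\, \langle \omega_{N_0} c_i, s \rangle \;=\; \tfrac{(-1)^{i-1}}{2}\, \epsilon\, c(\xi)\, \langle c_i, s \rangle. \]
Since $n$ is even, $\epsilon$ anticommutes with $c(\xi)$, so together with $\epsilon^{*}=\epsilon$ and $c(\xi)^{*}=-c(\xi)$ the operator $\epsilon c(\xi)$ is self-adjoint on $S_{\Omega_g}$. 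Applying the Leibniz rule to $\xi \langle \langle c_1, s\rangle, \langle c_2, s\rangle \rangle$ then produces two terms whose opposite signs (from the factor $(-1)^{i-1}$) cancel after transferring $\epsilon c(\xi)$ across the $S_N$-inner product. Hence the product is locally, and therefore globally, constant on $\Omega$.

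For the second step, fix a vertex $p$ of $\Omega$ at which $n$ faces $F_{\ell_1}, \ldots, F_{\ell_n}$ meet with Euclidean outward unit normals $N_{\ell_1}, \ldots, N_{\ell_n}$ linearly independent and spanning $\mathbb{R}^n$. The matching-angle hypothesis gives $g(\nu_{\ell_i}, \nu_{\ell_j})(p) = \langle N_{\ell_i}, N_{\ell_j} \rangle_\delta$ for every pair $i,j$, so the $g$-Gram matrix of the $\nu_{\ell_j}$ coincides with the $\delta$-Gram matrix of the $N_{\ell_j}$. The linear map sending $N_{\ell_j} \mapsto \nu_{\ell_j}$ therefore extends to a genuine isometry $\Phi \colon (\mathbb{R}^n, \delta) \to (T_p\Omega, g)$. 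Writing $N_0 = \sum_j \alpha_j N_{\ell_j}$ and setting $\nu_0 := \Phi(N_0) = \sum_j \alpha_j \nu_{\ell_j}$, which is a unit vector in $T_p\Omega$, linearity of $N \mapsto \omega_N$ combined with the face boundary identities $\omega_{N_{\ell_j}} s = -\epsilon c(\nu_{\ell_j}) s$ at $p$ yields
\[ \omega_{N_0} s \;=\; -\epsilon\, c(\nu_0)\, s \quad \text{at } p. \]
Taking the formal inner product with $c_i$ and using $\omega_{N_0} c_i = (-1)^{i-1} c_i$ converts this into $\epsilon c(\nu_0) \langle c_i, s\rangle = (-1)^{i} \langle c_i, s\rangle$ at $p$. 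Since $\epsilon c(\nu_0)$ is a self-adjoint involution, its $(+1)$- and $(-1)$-eigenspaces in $S_{\Omega_g}|_p$ are orthogonal; consequently $\langle c_1, s\rangle(p)$ and $\langle c_2, s\rangle(p)$ are orthogonal, which forces the constant from the first step to be zero.

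The principal obstacle is the passage $\omega_{N_0} s = -\epsilon c(\nu_0) s$, since $\omega_{N_0}$ acts on the fixed model fibre $\mathbb{C}^m$ coming from $S_M$ while $\epsilon c(\nu_0)$ acts on the geometric fibre $S_N|_p$; the bridge is supplied precisely by the matching-angle condition, which guarantees the simultaneous boundary conditions at all $n$ faces meeting at $p$ can be combined consistently by linearity. Finally, the type I claim follows from \eqref{goal} by letting $c_1, c_2$ range over bases of the two eigenspaces $V_{\pm}^{N_0} := \{ c \in \mathbb{C}^m : \omega_{N_0} c = \pm c \}$: the resulting pointwise orthogonality of the sets $S_1$ and $S_2$ defined in \eqref{S1 and S2} is, in the language of Definition \ref{defn-typeIII}, the statement that each component of $s$ is of type I.
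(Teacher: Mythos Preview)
Your argument for \eqref{goal} is essentially correct and parallels the paper's, with one difference worth noting. You work at an arbitrary vertex and invoke the matching-angle hypothesis to build an isometry $\Phi$ so that $\nu_0 = \Phi(N_0)$ has unit length. The paper instead takes $p_0$ to be the point of $\Omega$ at which $x^1$ is maximal; this forces $N_0$ to lie in the span of the face normals at $p_0$, and from $\omega_{N_0} s = -\epsilon c(\nu_0) s$ one gets $|\nu_0|^2 s = s$ automatically (apply $-\epsilon c(\nu_0)$ to both sides), so $|\nu_0|=1$ without using the angle condition at all. Your route is fine given that matching angles are assumed in Theorem~\ref{intersect rigidity} anyway, but the paper's choice shows this proposition does not itself depend on that hypothesis.

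Your final paragraph, however, contains a genuine error. You assert that ``the resulting pointwise orthogonality of the sets $S_1$ and $S_2$ \ldots\ is, in the language of Definition~\ref{defn-typeIII}, the statement that each component of $s$ is of type~I.'' That is not what type~I means: by Remark~\ref{rem-typeI}, $\phi$ is of type~I precisely when there exists a unit vector $\nu_0$ with $\epsilon c(\nu_0)\phi = \phi$ at some point, equivalently $V_\phi^2 - |\nabla V_\phi|^2 = 0$. Orthogonality of $S_1$ and $S_2$ is the content of \eqref{goal} itself, not the type~I property. The fix is immediate from what you already have: the relation $\epsilon c(\nu_0)\langle c_i, s\rangle = (-1)^{i}\langle c_i, s\rangle$ at $p$, specialized to the standard basis vectors $c_i$ of $V_\pm^{N_0}$, reads $\epsilon c(\mp\nu_0) s_\alpha = s_\alpha$ for each component $s_\alpha$, which is exactly the type~I criterion of Remark~\ref{rem-typeI}. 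So the ingredients are present; only the last sentence needs to be rewritten to cite the correct characterization.
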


\begin{proof}
Let $\Omega$ be a convex polytope in $(\mb{R}^n_+,\delta)$, let $p_0\in \partial\Omega$ be the (highest) point in $\Omega$, that is,
\begin{equation}
  x^1(p_0)=\max_{p\in \Omega}x^1(p).
\end{equation}
We assume that $p_0$ is in the codimension $k$ stratum of $\Omega$ for some $k$, then it admits a small neighborhood $U$ of the form:
\begin{equation}
  \mb{R}^{n-k}\times P
\end{equation}
such that $P$ is a polyhedral corner in $\mathbb{R}^k$ enclosed by hyperplanes passing through the origin of $\mathbb{R}^k$ and $p_0$ is the origin of $\mathbb{R}^n$. 

Since $x^1(p_0)\geq x^1(p)$ for any $p\in \Omega$, we conclude that the $(n-k)$-dimensional plane $\mb{R}^{n-k}$ in $\mb{R}^{n-k}\times P$ is orthogonal to $N_0=\frac{\p}{\p x^1}$. Note that $P$ is a polyhedral corner in $\mathbb{R}^k$, the intersection of all tangent spaces of $P$ at $p_0$ is the origin $p_0$, so
the space spanned by its normal vectors of these tangent spaces is $\mb{R}^k$. Otherwise, if a non-zero vector perpendicular to all normal vectors exists, it must fall in all tangent spaces of $P$, which is a contradiction since the intersection is a point. 
Hence, $N_0$ can be spanned by these normal vectors, that is, 
\begin{equation}
  N_0=\sum_{i=1}^{i_0}a_iN_i,
\end{equation}
where $N_i,1\leq i\leq i_0$, are the outward-pointing unit normal vector of hyperplanes in $P$. 

At this point $p_0\in \Sigma$, the boundary condition gives 
\begin{equation}
  \epsilon c(\nu_i)\omega_{N_i}s=-s,\quad 1\leq i\leq i_0,
\end{equation}
which is equivalent to 
\begin{equation}
 \omega_{N_i}s= -\epsilon c(\nu_i) c(\nu_i)s,
\end{equation}
and so 
\begin{equation}
   \omega_{N_0}s= \sum_{i=1}^{i_0}a_i\omega_{N_i}s=-\sum_{i=1}^{i_0}a_i \epsilon c(\nu_i)s=-\epsilon c(\nu_0)s,
\end{equation}
where
 $\nu_0:=\sum_{i=1}^{i_0}a_i\nu_i$.
In particular, we have
\begin{equation}\label{eqn1}
   -\epsilon c(\nu_0)s_\alpha=s_\alpha,\quad s_\alpha\in S_1=\{s_1,\cdots,s_{\frac{m}{2}}\}.
\end{equation}
and 
\begin{equation}\label{eqn2}
   \epsilon c(\nu_0)s_\beta=s_\beta,\quad s_\beta\in S_2=\{s_{\frac{m}{2}+1},\cdots,s_m\}.
\end{equation}
By acting $-\epsilon c(\nu_0)$ to both sides,
 \begin{equation}
 |\nu_0|^2 s_\alpha= -\epsilon c(\nu_0)s_\alpha=s_\alpha,
\end{equation}
so $\nu_0$ is a unit vector since $s_\alpha(p_0)\neq 0$. By \eqref{eqn1} and \eqref{eqn2}, $s_\alpha$ and $s_\beta$ are of type I by Remark \ref{rem-typeI}.

For any $s_\alpha\in S_1$ and $s_\beta\in S_2$, at this point $p_0\in \Sigma$,
\begin{align}
\left\langle s_\alpha,s_\beta\right\rangle&=\left\langle -\epsilon c(\nu_0)s_\alpha,s_\beta\right\rangle\\
&=-\left\langle s_\alpha,\epsilon c(\nu_0)s_\beta\right\rangle\\
&=-\left\langle s_\alpha,s_\beta\right\rangle
\end{align}
which follows that
$\left\langle s_\alpha,s_\beta\right\rangle(p_0)=0.$
Since $\left\langle s_\alpha,s_\beta\right\rangle$ is constant, 
$\left\langle s_\alpha,s_\beta\right\rangle\equiv 0.$
The proof is complete.
  \end{proof}

\subsection{Orthogonality of spinor components}

Now we establish the following orthogonality of the spinor components of $s$.

\begin{proposition}
  \label{scalar identity}Let $G = (G_{\alpha \beta})_{1 \leq \alpha,
  \beta \leq m}$ be the matrix given by $G_{\alpha \beta} = \langle
  s_{\alpha}, s_{\beta} \rangle$. Then $G$ is a non-zero multiple of the
  identity matrix everywhere. Note that $|s_{\alpha} |$ is not a constant.
\end{proposition}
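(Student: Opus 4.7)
My plan is to derive two complementary constraints on $G_{\alpha\beta}$---a second-order elliptic equation from the Killing relation and algebraic commutation relations from the face boundary conditions---and combine them to force $G$ to commute with the full complexified Clifford algebra at each point. Schur's lemma then finishes the argument.

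Starting from $\nabla_\xi s_\alpha = \tfrac{\lambda_\alpha}{2}\epsilon c(\xi) s_\alpha$ with $\lambda_\alpha = +1$ for $\alpha \in S_1$ and $\lambda_\alpha = -1$ for $\alpha \in S_2$, and using that $\epsilon c(\xi)$ is self-adjoint, I first compute
\[
\xi G_{\alpha\beta} = \tfrac{\lambda_\alpha + \lambda_\beta}{2}\langle \epsilon c(\xi) s_\alpha, s_\beta\rangle.
\]
When $\lambda_\alpha + \lambda_\beta = 0$ this vanishes, so the cross-sign entries of $G$ are constant and already zero by Proposition \ref{lm:goal}; hence $G$ is block-diagonal in the $S_1 \oplus S_2$ splitting. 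For same-sign pairs, differentiating once more in a geodesic frame and applying $c(\xi)c(\eta) + c(\eta)c(\xi) = -2g(\xi,\eta)$ yields $\operatorname{Hess} G_{\alpha\beta} = G_{\alpha\beta}\, g$.

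Along each face $F_\ell$, the boundary condition $\omega_{N_\ell} s = -\epsilon c(\nu_\ell) s$, Hermiticity of $\omega_{N_\ell}$, and self-adjointness of $\epsilon c(\nu_\ell)$ yield the matrix identity $\omega_{N_\ell} G = G \omega_{N_\ell}$ on $F_\ell$. Splitting $\omega_{N_\ell} = \langle N_\ell, N_0\rangle\, \omega_{N_0} + \omega_{N_\ell^\perp}$ and using $[\omega_{N_0}, G] = 0$ from block-diagonality reduces this to $[\omega_{N_\ell^\perp}, G] = 0$ on $F_\ell$. A direct computation using the boundary formula $\partial_{\nu_\ell} G = -\langle N_\ell, N_0\rangle\, G$ on each diagonal block shows that $\partial_{\nu_\ell}[\omega_{N_\ell^\perp}, G]$ is a scalar multiple of $[\omega_{N_\ell^\perp}, G]$, so it also vanishes on $F_\ell$. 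Every entry of $[\omega_{N_\ell^\perp}, G]$ is a linear combination of same-sign entries of $G$, and hence satisfies $\operatorname{Hess} f = fg$. Since $(\Omega, g)$ has already been shown to be hyperbolic, solutions of this elliptic equation are real-analytic (restrictions of linear functionals on the ambient Minkowski space), so Holmgren's theorem extends the commutation from $F_\ell$ to all of $\Omega$.

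Compactness of $\Omega$ forces its outward normals $\{N_\ell\}_{\ell \in \Lambda}$ to span $\mathbb{R}^n$, so the perpendicular components $\{N_\ell^\perp\}$ span $N_0^\perp \cong \mathbb{R}^{n-1}$; together with $\omega_{N_0}$ the operators $\{\omega_{N_\ell^\perp}\}_{\ell \in \Lambda}$ therefore generate the complexified Clifford algebra $\operatorname{Cl}_n \otimes_{\mathbb{R}} \mathbb{C} \cong \operatorname{End}(\Delta_n)$, which acts irreducibly on $\Delta_n$. Since $G$ commutes with this entire algebra at every point, Schur's lemma gives $G(p) = \mu(p)\, I_m$ pointwise, with $\mu > 0$ because $G$ is the Gram matrix of the linearly independent family $\{s_\alpha\}$ from Proposition \ref{full independence}; non-constancy of $\mu$ follows from $\operatorname{Hess} \mu = \mu\, g$ having no non-zero constant solution. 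The main obstacle is the Cauchy uniqueness step---propagating vanishing from the $(n-1)$-dimensional face into the $n$-dimensional interior---which ultimately rests on real-analyticity of solutions to $\operatorname{Hess} f = fg$ combined with Holmgren's theorem on any non-characteristic hypersurface of $(\mathbb{H}^n, b)$.
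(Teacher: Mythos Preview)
Your argument is correct and is essentially the paper's proof recast in matrix language: both establish $[G,\omega_X]=0$ for every Euclidean unit vector $X$ (the paper phrases this as the bilinear identity $\langle c_1,s\rangle\cdot\langle c_2,\omega_X s\rangle=\langle c_1,\omega_X s\rangle\cdot\langle c_2,s\rangle$, which it then unwinds to $G\omega_X=\omega_X G$) by combining the face boundary condition with the Hessian relation $\operatorname{Hess} f=fg$ and unique continuation from a face, and then conclude by Schur. One caveat: your appeal to Holmgren via the hyperbolicity of $(\Omega,g)$ is both unnecessary and, in the paper's logical ordering, premature---the equation $\operatorname{Hess} f=fg$ already restricts to the ODE $f''=f$ along every geodesic, so vanishing Cauchy data on a face forces $f\equiv 0$ on $\Omega$ by the elementary argument the paper actually uses, with no need to know the ambient curvature.
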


Let $\Lambda_{\pm} = \{c \in \mathbb{C}^m : \text{ } \omega_{N_0} c = \pm
c\}$. We have the following lemma.

\begin{lemma}
  Let $F$ be a face of $\Omega$ and $N$ be its Euclidean unit normal and
  $X = N - \langle N_0, N \rangle N_0$, then
   \begin{equation}
    \langle c_1, s \rangle \cdot \langle c_2, \omega_X s \rangle = \langle
    c_1, \omega_X s \rangle \cdot \langle c_2, s \rangle \label{relation 2}
  \end{equation}
 $c_1 \in \Lambda_+$ and $c_2 \in \Lambda_-$.
\end{lemma}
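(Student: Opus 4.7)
The plan is to exploit the linearity of the assignment $Y \mapsto \omega_Y$ together with the face boundary condition to convert $\omega_X s$ into an expression involving only $s$, $\omega_{N_0} s$, and $\epsilon c(\nu) s$, and then invoke Proposition \ref{lm:goal} to eliminate the cross terms.

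First I would note that since $\bar{c}$ is linear in its vector argument, the orthogonal decomposition $N = \langle N_0, N\rangle N_0 + X$ yields the matrix identity
\[
\omega_N = \langle N_0, N\rangle\, \omega_{N_0} + \omega_X
\]
in $\operatorname{End}(\mathbb{C}^m)$. Combining this with the boundary condition $\omega_N s = -\epsilon c(\nu) s$ along $F$, one obtains
\[
\omega_X s = -\epsilon c(\nu) s - \langle N_0, N\rangle\, \omega_{N_0} s.
\]
Pairing with $c_1 \in \Lambda_+$ and $c_2 \in \Lambda_-$, and using the Hermiticity of $\omega_{N_0}$ (established in the lemma just before Proposition \ref{full independence}) to compute $\langle c_i, \omega_{N_0} s\rangle = \langle \omega_{N_0} c_i, s\rangle = (-1)^{i-1}\langle c_i, s\rangle$, together with the observation that $\epsilon c(\nu) \in \operatorname{End}(S_{\Omega_g})$ passes outside the formal pairing $\langle c, \cdot\rangle$, I would deduce
\[
\langle c_1, \omega_X s\rangle = -\epsilon c(\nu)\langle c_1, s\rangle - \langle N_0, N\rangle\, \langle c_1, s\rangle,
\]
\[
\langle c_2, \omega_X s\rangle = -\epsilon c(\nu)\langle c_2, s\rangle + \langle N_0, N\rangle\, \langle c_2, s\rangle.
\]

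Substituting these into both sides of \eqref{relation 2}, the terms proportional to $\langle N_0, N\rangle$ each contribute $\pm \langle N_0, N\rangle \cdot \langle c_1, s\rangle \cdot \langle c_2, s\rangle$, which vanishes by Proposition \ref{lm:goal}. The remaining contributions on each side reduce to $-\langle \epsilon c(\nu)\langle c_1, s\rangle, \langle c_2, s\rangle\rangle$, where on the right-hand side one moves $\epsilon c(\nu)$ from the second slot to the first by its self-adjointness (exactly as in the proof of Lemma \ref{4.8}). Both sides then agree, yielding \eqref{relation 2}.

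I do not foresee a genuine obstacle: the argument is essentially linear-algebraic and relies only on standard Clifford identities, Hermiticity of $\omega_{N_0}$, and the vanishing already secured by Proposition \ref{lm:goal}. The only care required is tracking the two levels at which operators act (matrices on $\mathbb{C}^m$ versus Clifford endomorphisms on $S_{\Omega_g}$), and applying the boundary condition only along $F$, which is also where Proposition \ref{lm:goal} is in force pointwise.
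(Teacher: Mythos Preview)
Your proof is correct and actually a bit cleaner than the paper's. Both arguments rest on the same ingredients: the decomposition $\omega_N = \langle N_0,N\rangle\,\omega_{N_0} + \omega_X$, the eigenvector conditions $\omega_{N_0}c_i = (-1)^{i-1}c_i$, and Proposition~\ref{lm:goal}. The paper, however, does not use the boundary condition $\omega_N s = -\epsilon c(\nu)s$ directly. Instead it invokes Lemma~\ref{4.8}, namely $\langle c_1,(1+\omega_N)s\rangle\cdot\langle c_2,(1-\omega_N)s\rangle = 0$, expands this as a quadratic in the $\omega_X$-contributions, and then kills \emph{two} of the four resulting terms via Proposition~\ref{lm:goal} (the term $\langle c_1,\omega_X s\rangle\cdot\langle c_2,\omega_X s\rangle$ requires the extra observation that $\omega_X c_1\in\Lambda_-$ and $\omega_X c_2\in\Lambda_+$). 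What remains is the desired relation times a factor $b(1+a)$, so one must also dispose of the degenerate case $N=\pm N_0$.

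Your route avoids this detour: by writing $\omega_X s = -\epsilon c(\nu)s - \langle N_0,N\rangle\,\omega_{N_0}s$ and substituting, the comparison reduces immediately to the self-adjointness of $\epsilon c(\nu)$ on $S_{\Omega_g}$, with only a single appeal to Proposition~\ref{lm:goal} for the $\langle N_0,N\rangle$-terms. No division is needed, and no degenerate case arises. The trade-off is that you use the boundary condition explicitly, whereas the paper's version keeps everything at the level of the $\omega$-matrices via Lemma~\ref{4.8}; but since Lemma~\ref{4.8} is itself an immediate consequence of that same boundary condition, there is no real loss of generality.
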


\begin{proof}
We start by setting
  $a = \langle N, N_0 \rangle$ and $b = \langle N, X \rangle$, then $N = a N_0
  + b X$ and $a^2 + b^2 = 1$. From \eqref{cor from bv}, we see
  \[ 0 = \langle c_1, (1 + a) s + b \omega_X s \rangle \cdot \langle c_2, (1 +
     a) s - b \omega_X s \rangle . \]
  Expanding leads to
\begin{align}
0 = & (1 + a)^2 \langle c_1, s \rangle \cdot \langle c_2, s \rangle - b^2
\langle c_1, \omega_X s \rangle \cdot \langle c_2, \omega_X s \rangle
\\
& \qquad - b (1 + a) [\langle c_1, s \rangle \cdot \langle c_2, \omega_X
s \rangle - \langle c_1, \omega_X s \rangle \cdot \langle c_2, s \rangle].
\end{align}
We know from Lemma \ref{lm:goal} that $\langle c_1,s \rangle \cdot \langle c_2,s \rangle =0$.
 Since $\omega_{N_0} c_1 = c_1$, $X$ and $N_0$ are
  orthogonal, we have that $\omega_{N_0} \omega_X c_1 = - \omega_X
  \omega_{N_0} c_1 = - \omega_X c_1$. So $\omega_X c_1 \in \Lambda_-$, and
  similarly, $\omega_X c_2 \in \Lambda_+$. Hence $\langle c_1, \omega_{X}s \rangle \cdot \langle c_2, \omega_{X}s \rangle =0$ as well, and the lemma is proved.
\end{proof}

We have the following lemma regarding the Hessian and boundary derivatives
of 
$ \langle c_1, s \rangle \cdot \langle c_2, s \rangle$.

\begin{lemma} \label{hessian and boundary derivative}
Let $c_1, c_2 \in \mathbb{C}^m$, then
\[ \nabla^{2} (\langle c_1, s \rangle \cdot \langle c_2, s \rangle) =
   \langle c_1, s \rangle \cdot \langle c_2, s \rangle g \text{ in }
   \Omega, \]
and
\[ \nabla_{\nu} (\langle c_1, s \rangle \cdot \langle c_2, s \rangle) =
   \langle N_0, N \rangle \langle c_1, s \rangle \cdot \langle c_2, s \rangle
   \text{ along } \partial \Omega . \]
  \end{lemma}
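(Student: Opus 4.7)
The plan is to differentiate the first-order equation $\nabla_\xi s = \tfrac{1}{2} \epsilon c(\xi) \omega_{N_0} s$ from \eqref{eqn-10 s} and eliminate every Clifford-algebraic remainder by decomposing $\mathbb{C}^m = \Lambda_+ \oplus \Lambda_-$ into the $\pm 1$-eigenspaces of $\omega_{N_0}$ (well defined since $|N_0|=1$ gives $\omega_{N_0}^2 = \mathrm{id}$), then invoking Proposition \ref{lm:goal} to kill mixed-type pairings and \eqref{relation 2} to cancel the residual tangential terms on the boundary. Throughout I abbreviate $u_i = \langle c_i, s\rangle$ and $u_i' = \langle \omega_{N_0} c_i, s\rangle$; Hermitianity of $\omega_{N_0}$ together with $\omega_{N_0}^2 = \mathrm{id}$ yields $\nabla_\xi u_i = \tfrac{1}{2}\epsilon c(\xi)\, u_i'$ and $\nabla_\xi u_i' = \tfrac{1}{2}\epsilon c(\xi)\, u_i$.

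For the Hessian, I differentiate $\nabla_\xi f = \tfrac{1}{2}\langle \epsilon c(\xi)\, u_1', u_2\rangle + \tfrac{1}{2}\langle u_1, \epsilon c(\xi)\, u_2'\rangle$ once more. Using the self-adjointness of $\epsilon c(\xi)$, the skew-adjointness of $c(\xi)$, and the Clifford identity $c(\xi)c(\eta)+c(\eta)c(\xi) = -2\langle \xi,\eta\rangle$, the four resulting inner products collapse to $\nabla_\eta\nabla_\xi f = \tfrac{1}{2}\langle \xi,\eta\rangle\bigl(\langle u_1,u_2\rangle + \langle u_1',u_2'\rangle\bigr)$. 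Writing $c_i = c_i^+ + c_i^-$ with $\omega_{N_0} c_i^\pm = \pm c_i^\pm$, so that $u_i' = \langle c_i^+, s\rangle - \langle c_i^-, s\rangle$, Proposition \ref{lm:goal} annihilates the cross pairings between $\Lambda_+$ and $\Lambda_-$, and both $\langle u_1, u_2\rangle$ and $\langle u_1', u_2'\rangle$ collapse to $\langle \langle c_1^+, s\rangle, \langle c_2^+, s\rangle\rangle + \langle \langle c_1^-, s\rangle, \langle c_2^-, s\rangle\rangle = f$. Hence $\nabla^2 f = f\, g$ in $\Omega$.

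For the boundary derivative, I evaluate the same formula at $\xi = \nu$ on $\partial \Omega$. The boundary condition \eqref{bc brendle s} together with $(\epsilon c(\nu))^2 = \mathrm{id}$ gives $\epsilon c(\nu)\, s = -\omega_N s$ componentwise, and a direct computation (using Hermitianity of $\omega_N$ and $\omega_{N_0}$) then yields $\epsilon c(\nu)\, u_i' = -\langle \omega_N \omega_{N_0} c_i, s\rangle$. Writing $N = \langle N_0, N\rangle N_0 + X$ with $X \perp N_0$, and exploiting $\omega_X \omega_{N_0} = -\omega_{N_0} \omega_X$, we have $\omega_N\omega_{N_0} = \langle N_0, N\rangle\,\mathrm{id} + \omega_X\omega_{N_0}$. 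The constant-multiple piece produces the desired $\langle N_0, N\rangle f$. The $\omega_X\omega_{N_0}$ piece, after the $\Lambda_\pm$ split (using that $\omega_X$ swaps $\Lambda_\pm$) and another application of Proposition \ref{lm:goal}, reduces to expressions of the form $\langle \omega_X c_i^\pm, s\rangle\cdot\langle c_j^\mp, s\rangle - \langle c_i^\pm, s\rangle\cdot\langle \omega_X c_j^\mp, s\rangle$, which vanish pairwise by \eqref{relation 2} applied to $(c_1^+, c_2^-)$ and (by the symmetric version derived from Lemma \ref{4.8} by swapping the roles of $c_1, c_2$ in its expansion) to $(c_1^-, c_2^+)$.

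The main obstacle is the boundary bookkeeping: the $\omega_X$-driven terms do not vanish individually and cancel only after combining the two summands of $\nabla_\nu f$ and invoking \eqref{relation 2} in both polarizations. The interior identity, by contrast, is essentially immediate once the $\Lambda_\pm$ decomposition is combined with the standard Clifford algebra and Proposition \ref{lm:goal}.
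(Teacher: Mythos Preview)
Your argument is correct and follows the same route as the paper's own proof: differentiate the Killing equation \eqref{eqn-10 s}, collapse the Clifford terms via $c(\xi)c(\eta)+c(\eta)c(\xi)=-2\langle\xi,\eta\rangle$, then invoke Proposition~\ref{lm:goal} (after the $\Lambda_\pm$ split) to identify $\langle u_1',u_2'\rangle$ with $\langle u_1,u_2\rangle$; on the boundary, rewrite $\epsilon c(\nu)s=-\omega_N s$, decompose $N=\langle N_0,N\rangle N_0+X$, and cancel the $\omega_X$-terms using \eqref{relation 2} in both polarizations. Your bookkeeping with the auxiliary spinors $u_i,u_i'$ is a slightly cleaner packaging of the same computation, and your remark that the $(c_1^-,c_2^+)$ case of \eqref{relation 2} follows by rerunning the expansion of Lemma~\ref{4.8} (equivalently, by conjugate symmetry of the Hermitian pairing) is exactly what the paper uses implicitly.
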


  \begin{proof}

First, we choose a geodesic normal frame at point $x \in
\Omega$ such that $\nabla_i e_j = 0$ and $\langle e_i , e_j \rangle = \delta_{ij}$
at the point $x$. It suffices to prove the lemma at $x$.
By differentiating the Killing
spinor equation twice we get that $\nabla_j \nabla_i s = - \tfrac{1}{4} e_i
\cdot e_j \cdot s$. We have
\begin{align}
& \nabla_j \nabla_i (\langle c_1, s \rangle \cdot \langle c_2, s \rangle)
\\
= & \langle c_1, \nabla_j \nabla_i s \rangle \cdot \langle c_2, s \rangle +
\langle c_1, s \rangle \cdot \langle c_2, \nabla_j \nabla_i s \rangle
\\
& \quad + \langle c_1, \nabla_j s \rangle \cdot \langle c_2, \nabla_i s
\rangle + \langle c_1, \nabla_i s \rangle \cdot \langle c_2, \nabla_j s
\rangle \\
= & - \langle c_1, \tfrac{1}{4} e_i \cdot e_j \cdot s \rangle \cdot \langle
c_2, s \rangle - \langle c_1, s \rangle \cdot \langle c_2, \tfrac{1}{4} e_i
\cdot e_j \cdot s \rangle \\
& \quad + \tfrac{1}{4} \langle c_1, \omega_{N_0} \epsilon e_j \cdot s
\rangle \cdot \langle c_2, \omega_{N_0} \epsilon e_i \cdot s \rangle
\\
& \quad + \tfrac{1}{4} \langle c_1, \omega_{N_0} \epsilon e_i \cdot s
\rangle \cdot \langle c_2, \omega_{N_0} \epsilon e_j \cdot s \rangle
\\
= & - \langle c_1, \tfrac{1}{4} (e_j \cdot e_i + e_i \cdot e_j) \cdot s
\rangle \cdot \langle c_2, s \rangle \\
& \quad - \tfrac{1}{4} \langle c_1, \omega_{N_0} (e_j \cdot e_i + e_i \cdot
e_j) \cdot s \rangle \cdot \langle c_2, \omega_{N_0} s \rangle \\
= & \tfrac{1}{2} (\langle c_1, s \rangle \cdot \langle c_2, s \rangle +
\langle c_1, \omega_{N_0} s \rangle \cdot \langle c_2, \omega_{N_0} s
\rangle) \delta_{i j} .
\end{align}
By considering the decomposition that $c_i = \tfrac{1}{2} (1 + \omega_{N_0})
c_i + \tfrac{1}{2} (1 - \omega_{N_0}) c_i$ and 
Lemma \ref{lm:goal}, we see 
\[ \langle c_1, \omega_{N_0} s \rangle \cdot \langle c_2, \omega_{N_0} s
   \rangle = \langle c_1, s \rangle \cdot \langle c_2, s \rangle . \]
Hence
\[ \nabla_j \nabla_i (\langle c_1, s \rangle \cdot \langle c_2, s \rangle) =
   \langle c_1, s \rangle \cdot \langle c_2, s \rangle \delta_{i j} .
   \label{staticity} \]

Let $F \subset \partial \Omega$ be a face of the polyhedron $\Omega$. We
compute the normal derivative $\nabla_{\nu} (\langle c_1, s \rangle \cdot
\langle c_2, s \rangle)$. We see that using the imaginary Killing equation and
the boundary condition,
\begin{align}
& \nabla_{\nu} (\langle c_1, s \rangle \cdot \langle c_2, s \rangle)
\\
= & - \tfrac{1}{2} \langle c_1, \omega_{N_0} \epsilon \nu \cdot s \rangle
\cdot \langle c_2, s \rangle - \tfrac{1}{2} \langle c_1, s \rangle \cdot
\langle c_2, \omega_{N_0} \epsilon \nu \cdot s \rangle \\
= & - \tfrac{1}{2} \langle c_1, \omega_{N_0} \omega_N s \rangle \cdot
\langle c_2, s \rangle - \tfrac{1}{2} \langle c_1, s \rangle \cdot \langle
c_2, \omega_{N_0} \omega_N s \rangle .
\end{align}
We use the decomposition $N = a N_0 + b X := \langle N_0, N \rangle N + b X$,
\begin{align}
& \nabla_{\nu} (\langle c_1, s \rangle \cdot \langle c_2, s \rangle)
\\
= & - \tfrac{1}{2} \langle c_1, a s + b \omega_{N_0} \omega_X s \rangle
\cdot \langle c_2, s \rangle - \tfrac{1}{2} \langle c_1, s \rangle \cdot
\langle c_2, a s + b \omega_{N_0} \omega_X s \rangle \\
= & - a \langle c_1, s \rangle \cdot \langle c_2, s \rangle . \\
& \quad - \tfrac{1}{2} b \langle \omega_X \omega_{N_0} c_1, s \rangle \cdot
\langle c_2, s \rangle - \tfrac{1}{2} b \langle c_1, s \rangle \cdot \langle
\omega_X \omega_{N_0} c_2, s \rangle \\
= & - a \langle c_1, s \rangle \cdot \langle c_2, s \rangle . \\
& \quad - \tfrac{1}{2} b (\langle \omega_X c_1^+, s \rangle \cdot \langle
c_2^-, s \rangle - \langle \omega_X c_1^-, s \rangle \cdot \langle c_2^+ s
\rangle) \\
& \quad - \tfrac{1}{2} b (- \langle c_1^+, s \rangle \cdot \langle \omega_X
c_2^- \rangle + \langle c_1^-, s \rangle \cdot \langle \omega_X c_2^+ s
\rangle)
\end{align}
where in the last line, we have used that $c_i = c_i^+ + c_i^-$, $\omega_{N_0}
c_i^{\pm} = \pm c_i^{\pm}$ and $\omega_X \omega_{N_0} = - \omega_{N_0}
\omega_X$. Hence by relation \eqref{relation 2},
\begin{equation}
  \nabla_{\nu} (\langle c_1, s \rangle \cdot \langle c_2, s \rangle) = -
  \langle N_0, N \rangle \langle c_1, s \rangle \cdot \langle c_2, s \rangle .
  \label{normal derivative}
\end{equation}

\end{proof}

\begin{lemma}
  We have
  \begin{equation}
    \langle c_1, s \rangle \cdot \langle c_2, s \rangle = \langle c_1,
    \omega_X s \rangle \cdot \langle c_2, \omega_X s \rangle \label{invariant on unit X}
  \end{equation}
  on $\Omega$ for all unit Euclidean $X$, $\omega_{N_0} c_i = c_i$.
\end{lemma}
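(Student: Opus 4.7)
The plan is to show that $\Phi := \langle c_1, s\rangle \cdot \langle c_2, s\rangle - \langle c_1, \omega_X s\rangle \cdot \langle c_2, \omega_X s\rangle$ vanishes identically on $\Omega$. Using $\langle c_i, \omega_X s\rangle = \langle \omega_X c_i, s\rangle$, Lemma \ref{hessian and boundary derivative} (whose proof works for arbitrary $c_i \in \mathbb{C}^m$, since the relevant step only uses the $\omega_{N_0}$-decomposition $c_i = \tfrac{1}{2}(1+\omega_{N_0})c_i + \tfrac{1}{2}(1-\omega_{N_0})c_i$ together with Proposition \ref{lm:goal}) applies to both $(c_1,c_2)$ and $(\omega_X c_1, \omega_X c_2)$, so $\nabla^2 \Phi = \Phi g$ on $\Omega$. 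Along any piecewise smooth curve, the pair $(\Phi, \nabla\Phi)$ satisfies a first-order linear ODE system, so exhibiting one point $v \in \Omega$ with $\Phi(v) = 0$ and $\nabla \Phi(v) = 0$ will force $\Phi \equiv 0$ by connectedness of $\Omega$. I first reduce to the case $X \perp N_0$: writing $X = \alpha N_0 + \beta Y$ with $Y \perp N_0$ unit and $\alpha^2 + \beta^2 = 1$, the splitting $\omega_X c_i = \alpha c_i + \beta \omega_Y c_i$ (with $c_i \in \Lambda_+$ and $\omega_Y c_i \in \Lambda_-$) combined with Proposition \ref{lm:goal} killing the cross terms yields $F_2(X) = \alpha^2 F_1 + \beta^2 F_2(Y)$, so the identity reduces to $F_1 = F_2(Y)$ for every unit $Y \perp N_0$.

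The candidate point is any vertex $v$ of the polytope $\Omega$; at such a vertex the Euclidean normals of the incident faces span $\mathbb{R}^n$. Writing $N_0 = \sum a_i N_{\ell_i}$, $Y = \sum b_i N_{\ell_i}$ and setting $\nu_0 := \sum a_i \nu_{\ell_i}$, $\nu_Y := \sum b_i \nu_{\ell_i}$, the matching-angle hypothesis gives $|\nu_0|_g = |\nu_Y|_g = 1$ and $\langle \nu_0, \nu_Y\rangle_g = 0$, and the face boundary conditions combine linearly to $\omega_{N_0} s(v) = -\epsilon c(\nu_0) s(v)$ and $\omega_Y s(v) = -\epsilon c(\nu_Y) s(v)$. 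Setting $v_i := \langle c_i, s(v)\rangle$, the fact that $\epsilon c(\nu_Y)$ is a self-adjoint unitary with $(\epsilon c(\nu_Y))^2 = 1$ gives
\begin{equation}
F_2(Y)(v) = \langle -\epsilon c(\nu_Y) v_1, -\epsilon c(\nu_Y) v_2\rangle = \langle v_1, v_2\rangle = F_1(v),
\end{equation}
so $\Phi(v) = 0$.

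For the gradient, the identities $\nabla_\xi \langle c, s\rangle = \tfrac{1}{2}\epsilon c(\xi)\langle c, s\rangle$ for $c \in \Lambda_+$ and $\nabla_\xi \langle c, s\rangle = -\tfrac{1}{2}\epsilon c(\xi)\langle c, s\rangle$ for $c \in \Lambda_-$ (both read off from $\nabla_\xi s = \tfrac{1}{2}\epsilon c(\xi)\omega_{N_0} s$) yield $\nabla_\xi F_1 = \langle \epsilon c(\xi) v_1, v_2\rangle$ and $\nabla_\xi F_2(Y) = -\langle \epsilon c(\xi) u_1, u_2\rangle$ with $u_i := \langle \omega_Y c_i, s\rangle$. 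Substituting $u_i(v) = -\epsilon c(\nu_Y) v_i$ and using the Clifford identity $\epsilon c(\nu_Y)\epsilon c(\xi)\epsilon c(\nu_Y) = 2\langle \nu_Y, \xi\rangle \epsilon c(\nu_Y) - \epsilon c(\xi)$, immediate from $\epsilon c(\eta)\epsilon c(\mu) + \epsilon c(\mu)\epsilon c(\eta) = 2\langle \eta, \mu\rangle$ and $|\nu_Y| = 1$, one computes
\begin{equation}
\nabla_\xi \Phi(v) = 2\langle \nu_Y, \xi\rangle \langle v_1, \epsilon c(\nu_Y) v_2\rangle.
\end{equation}
The hard part and main obstacle is the algebraic vanishing $\langle v_1, \epsilon c(\nu_Y) v_2\rangle = 0$: from $\omega_{N_0} c_i = c_i$ and $\omega_{N_0} s(v) = -\epsilon c(\nu_0) s(v)$ one obtains $v_i = \langle c_i, \omega_{N_0} s(v)\rangle = -\epsilon c(\nu_0) v_i$, so $\epsilon c(\nu_0) v_i = -v_i$; meanwhile $\nu_0 \perp \nu_Y$ gives $\epsilon c(\nu_0)\epsilon c(\nu_Y) = -\epsilon c(\nu_Y)\epsilon c(\nu_0)$, forcing $\epsilon c(\nu_Y) v_2$ into the $+1$-eigenspace of the self-adjoint $\epsilon c(\nu_0)$. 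Since eigenspaces for distinct eigenvalues of a self-adjoint operator are orthogonal, $\langle v_1, \epsilon c(\nu_Y) v_2\rangle = 0$, which together with Step one completes the proof.
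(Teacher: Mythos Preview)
Your proof is correct but follows a genuinely different route from the paper's. The paper argues face by face: for a fixed face $F$ with Euclidean normal $N = aN_0 + bX$, it expands the identity $\langle c_1,(1+\omega_N)s\rangle\cdot\langle c_2,(1-\omega_N)s\rangle=0$ from Lemma \ref{4.8} to obtain $\Phi=0$ along $F$, then uses the normal derivative formula of Lemma \ref{hessian and boundary derivative} to get $\nabla\Phi=0$ along $F$, and propagates via $\nabla^2\Phi=\Phi g$. This only yields the identity for the single direction $X$ determined by $F$; the paper then rewrites the result in the linear form $\langle c_1,s\rangle\cdot\langle \omega_X c_3,s\rangle=\langle c_1,\omega_X s\rangle\cdot\langle c_3,s\rangle$ and uses that the face normals span $\mathbb{R}^n$ to bootstrap to all $X$. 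You instead work at a single vertex $v$ and invoke the matching-angle hypothesis to produce, for every unit $Y\perp N_0$, a $g$-unit vector $\nu_Y$ with $\omega_Y s(v)=-\epsilon c(\nu_Y)s(v)$ and $\langle\nu_0,\nu_Y\rangle_g=0$; this lets you verify $\Phi(v)=0$ and $\nabla\Phi(v)=0$ for \emph{all} $X$ simultaneously via Clifford algebra, avoiding the linear bootstrap entirely. The trade-off is that your argument consumes the matching-angle assumption of Theorem \ref{intersect rigidity}, whereas the paper's proof of this particular lemma does not, making the paper's version slightly more modular; within the ambient context, however, both are valid, and your route is arguably more direct.
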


\begin{proof}
  The identity \eqref{invariant on unit X} is valid obviously for $X = N_0$,
  it suffices to prove for $X$ normal to $X$.
  
  Along a face $F$ with the Euclidean normal $N$, since by Lemma \ref{4.8}, we have $\langle c_1, (1 +
  \omega_N) s \rangle \cdot \langle c_2, (1 - \omega_N) s \rangle = 0$. Using
  $N = a N_0 + b X$, we see
\begin{align}
& (1 - a^2) \langle c_1, s \rangle \cdot \langle c_2, s \rangle - b^2
\langle c_1, \omega_X s \rangle \cdot \langle c_2, \omega_X s \rangle
\\
& \quad - b (1 + a) \langle c_1, s \rangle \cdot \langle c_2, \omega_X s
\rangle + b \langle c_1, \omega_X s \rangle \cdot \langle c_2, s \rangle =
0.
\end{align}
  So
  \[ \langle c_1, s \rangle \cdot \langle c_2, s \rangle = \langle c_1,
     \omega_X s \rangle \cdot \langle c_2, \omega_X s \rangle \]
  along $F$. Let $f = \langle c_1, s \rangle \cdot \langle c_2, s \rangle -
  \langle c_1, \omega_X s \rangle \cdot \langle c_2, \omega_X s \rangle$, we
  see $\nabla^F f = 0$ along $F$. As calculated earlier
\begin{align}
\nabla_{\nu} (\langle c_1, s \rangle \cdot \langle c_2, s \rangle) & = -
\langle N_0, N \rangle \langle c_1, s \rangle \cdot \langle c_2, s
\rangle, \\
\nabla_{\nu} (\langle c_1, \omega_X s \rangle \cdot \langle c_2, \omega_X
s \rangle) & = - \langle N_0, N \rangle \langle c_1, \omega_X s \rangle
\cdot \langle c_2, \omega_X s \rangle,
\end{align}
  so $f = \nabla f = 0$ along $F$. Since we have that $\nabla_i \nabla_j f = f
  g_{i j}$ and we conclude that $f$ vanishes on all $\Omega$.

  Let $c_2 = \omega_Xc_3$ for some $c_3$ with $\omega_{N_0}c_3 = -c_3$, then
  \begin{equation}
    \label{invariant on unit X c3}
    \langle c_1, s \rangle \cdot \langle \omega_X c_3, s \rangle = \langle c_1, \omega_X s \rangle \cdot \langle c_3, s \rangle.
  \end{equation}
  The advantage of \eqref{invariant on unit X c3} over \eqref{invariant on unit X} is linearity with respect to $X$.
Because that \eqref{invariant on unit X c3} is valid for $N_0$ obviously and for
  all $N - \langle N_0, N \rangle N_0$ with $N$ being the normals of faces of
  $\Omega$, so \eqref{invariant on unit X c3} is valid for all vectors in $\mathbb{R}^n$. In
  particular, let $X$ be a unit vector,
  \[ \langle c_1, s \rangle \cdot \langle \omega_X c_3, s \rangle = \langle c_1, \omega_X s \rangle \cdot \langle c_3, s \rangle, \]
  By replacing $c_3$ back with $\omega_X c_2$ in \eqref{invariant on unit X c3}, we finish the proof of the lemma. 
\end{proof}

\begin{corollary} \label{cr:invariant under X}
  For any $c_1, c_2 \in \mathbb{C}^m$,
 \begin{equation}
    \langle c_1, s \rangle \cdot \langle c_2, \omega_X s \rangle = \langle
    c_1, \omega_X s \rangle \cdot \langle c_2, s \rangle \label{invariance under omega X}
  \end{equation}
  on $\Omega$ for all unit Euclidean vectors $X$.

\end{corollary}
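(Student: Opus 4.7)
The plan is to reduce to the eigenspace decomposition $\mathbb{C}^m = \Lambda_+ \oplus \Lambda_-$ and verify the identity case by case, exploiting bilinearity of both sides in $(c_1, c_2)$. Writing $c_i = c_i^+ + c_i^-$ with $c_i^{\pm} \in \Lambda_{\pm}$, the identity \eqref{invariance under omega X} splits into four contributions indexed by the signs $(\epsilon_1, \epsilon_2) \in \{+, -\}^2$, so it suffices to verify each of the four combinations.

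For $c_1 \in \Lambda_+$, $c_2 \in \Lambda_-$, apply \eqref{invariant on unit X c3} with $c_3 = c_2$, using the Hermiticity of $\omega_X$ to rewrite $\langle \omega_X c_2, s \rangle = \langle c_2, \omega_X s \rangle$. This yields the identity directly. The swapped case $c_1 \in \Lambda_-$, $c_2 \in \Lambda_+$ follows by taking the complex conjugate of the $(+,-)$ identity and relabeling, using that the spinor inner product is Hermitian.

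For $c_1, c_2$ in the same eigenspace $\Lambda_{\epsilon}$, decompose $X = \alpha N_0 + \beta Y$ with $Y$ a unit vector orthogonal to $N_0$ and $\alpha^2 + \beta^2 = 1$, so that $\omega_X = \alpha \omega_{N_0} + \beta \omega_Y$. Since $\omega_{N_0}$ acts as the scalar $\epsilon$ on $\Lambda_{\epsilon}$, the $\omega_{N_0}$-part of $\omega_X$ contributes the same quantity $\epsilon \alpha \langle c_1, s \rangle \cdot \langle c_2, s \rangle$ to each side of the desired identity. The $\omega_Y$-parts produce cross terms of the form $\beta \langle c_1, s \rangle \cdot \langle \omega_Y c_2, s \rangle$ and $\beta \langle \omega_Y c_1, s \rangle \cdot \langle c_2, s \rangle$; since $\omega_Y$ anticommutes with $\omega_{N_0}$, it maps $\Lambda_{\epsilon}$ into $\Lambda_{-\epsilon}$, and each cross term therefore pairs a vector in $\Lambda_+$ with one in $\Lambda_-$, vanishing by Lemma \ref{lm:goal} (and its conjugate form). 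Hence both sides coincide in this case as well.

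I do not expect a serious obstacle: once \eqref{invariant on unit X c3} and Lemma \ref{lm:goal} are in place, the argument is routine bookkeeping against the $\Lambda_+ \oplus \Lambda_-$ decomposition together with the anticommutation of $\omega_Y$ with $\omega_{N_0}$. The only mild subtlety is that the formal pairing $\langle c, s \rangle$ is sesquilinear in $c$, so one has to be careful with complex conjugation when swapping $c_1$ and $c_2$ to pass from the $(+,-)$ case to the $(-,+)$ case.
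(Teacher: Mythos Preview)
Your argument is correct. It differs from the paper's route in one notable respect: the paper first upgrades the squared identity $\langle c_1, s\rangle\cdot\langle c_2, s\rangle=\langle c_1,\omega_X s\rangle\cdot\langle c_2,\omega_X s\rangle$ from the $(+,+)$ and $(-,-)$ cases (the preceding lemma) to arbitrary $c_1,c_2$ using Lemma~\ref{lm:goal}, and only then substitutes $c_2\mapsto\omega_X c_2$ to obtain \eqref{invariance under omega X}. You instead bypass the squared identity entirely for the same-sign cases by splitting $X=\alpha N_0+\beta Y$ and killing the $\omega_Y$-contributions with Lemma~\ref{lm:goal}; this shows directly that both sides equal $\epsilon\alpha\,\langle c_1,s\rangle\cdot\langle c_2,s\rangle$. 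Your approach is a bit more economical in that it never invokes \eqref{invariant on unit X} itself, only \eqref{invariant on unit X c3} and Lemma~\ref{lm:goal}. The paper's route has the mild advantage that it records the general squared identity as an intermediate result, though this is not used elsewhere. Your caution about sesquilinearity is well placed: the $(-,+)$ case really does need the Hermitian conjugate of the $(+,-)$ case rather than a naive swap, and Lemma~\ref{lm:goal} as stated is asymmetric in $(c_1,c_2)$, so the conjugate form must be invoked explicitly when $\omega_Y c_1\in\Lambda_-$ sits in the first slot.
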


\begin{proof}
  We have shown that the corollary holds for $\omega_{N_0} c_i = c_i$, and
  also when $\omega_{N_0} c_1 = c_1$, $\omega_{N_0} c_2 = - c_2$. The case
  $\omega_{N_0} c_i = - c_i$ is proved similarly. For the general case, we set
  $c_i^{\pm}$ such that $\omega_{N_0} c_i^{\pm} = \pm c_i^{\pm}$. Using
  Lemma \ref{lm:goal} and \eqref{invariant on unit X}, we see
\begin{align}
& \langle c_1, s \rangle \cdot \langle c_2, s \rangle \\
= & \langle c_1^+, s \rangle \cdot \langle c_2^+, s \rangle + \langle
c_1^-, s \rangle \cdot \langle c_2^-, s \rangle \\
= & \langle c_1^+, \omega_X s \rangle \cdot \langle c_2^+, \omega_X s
\rangle + \langle c_1^-, \omega_X s \rangle \cdot \langle c_2^-, \omega_X
s \rangle \\
= & \langle c_1, \omega_X s \rangle \cdot \langle c_2, \omega_X s \rangle
.
\end{align}
Hence, we have
  \begin{equation}
    \langle c_1, s \rangle \cdot \langle c_2, s \rangle = \langle c_1,
    \omega_X s \rangle \cdot \langle c_2, \omega_X s \rangle
  \end{equation}
  on $\Omega$ for all unit Euclidean $X$.
 Since $c_1$ and $c_2$ are arbitrary, we can replace $c_2$ with $\omega_Xc_2$. It then follows from self-adjointness of $\omega_X$, $\omega_X^2 = 1$ 
 that
  \begin{equation*}
    \langle c_1, s \rangle \cdot \langle c_2, \omega_X s \rangle = \langle
    c_1, \omega_X s \rangle \cdot \langle c_2, s \rangle 
  \end{equation*}
  on $\Omega$ for all unit Euclidean vectors $X$.
The corollary is proven.
\end{proof}

Now Proposition \ref{scalar identity} should be a simple consequence
of the previous corollary.

\begin{proof}[Proof of Proposition \ref{scalar identity}]
  Let $c_i = c^{(i)}_{\alpha}$ with $i = 1, 2$ and $\alpha = 1, \cdots, m$,
  let $G_{\alpha \beta} = \langle s_{\alpha}, s_{\beta} \rangle$. We write
  carefully \eqref{invariance under omega X} in components,
\begin{align}
\langle c_1, s \rangle \cdot \langle c_2, \omega_X s \rangle & = \langle
\bar{c}_{\alpha}^{(1)} s_{\alpha}, \bar{c}_{\mu}^{(2)} \omega_{X \mu
\lambda} s_{\lambda} \rangle = \bar{c}_{\alpha}^{(1)} G_{\alpha \lambda}
\bar{\omega}_{X \mu \lambda} c_{\mu}^{(2)}, \\
\langle c_1, \omega_X s \rangle \cdot \langle c_2, s \rangle & = \langle
\bar{c}_{\alpha}^{(1)} \omega_{X \alpha \lambda} s_{\lambda},
\bar{c}_{\mu}^{(2)} s_{\mu} \rangle = \bar{c}_{\alpha}^{(1)} \omega_{X
\alpha \lambda} G_{\lambda \mu} c_{\mu}^{(2)} .
\end{align}
  Since $c_1$ and $c_2$ are arbitrary, we know that $G_{\alpha \lambda}
  \bar{\omega}_{X \mu \lambda} = \omega_{X \alpha \lambda} G_{\lambda \mu}$.
  Since $\omega_X$ is self adjoint and $\omega_X^2 = 1$, we see
  $\bar{\omega}_{X \mu \lambda} = \omega_{X\lambda \mu}$ and $G_{\alpha
  \lambda} \omega_{X \lambda \mu} = \omega_{X \alpha \lambda} G_{\lambda
  \mu}$. This says that $G$ commutes with any $\omega_X$ where $X$ is of unit
  Euclidean length. So $G$ has to be a scalar multiple of the identity matrix.
\end{proof}

\subsection{Types of spinor components}

We see ${s}$ satisfies
\[ {\nabla}_{{\xi}} {s} - \tfrac{1}{2} {\epsilon} 
   {c} ({\xi}) \omega_{N_0} {s} = 0. \]
We fix the basis $\{\bar{s}_{\alpha} \}_{1 \leq \alpha \leq m}$ of
$\Delta_n$ such that $\b{\epsilon} \b{c} (N_0) \bar{s}_{\alpha} = \bar{s}_{\alpha}$ for $1
\leq \alpha \leq m / 2$ and  $\b{\epsilon} \b{c} (N_0) \bar{s}_{\alpha} = -
\bar{s}_{\alpha}$ for $m / 2 < \alpha \leq m$. We write \eqref{killing}
in components,
\begin{equation}
  {\nabla}_{{\xi}} {s}_{\alpha} - \tfrac{1}{2} {\epsilon}
  {c} ({\xi}) {s}_{\alpha} = 0\text{, } 1 \leqslant \alpha \leqslant \frac{m}{2}
  \text{ and }
  {\nabla}_{{\xi}} {s}_{\alpha} + \tfrac{1}{2} {\epsilon}
   {c} ({\xi}) {s}_{\alpha} = 0\text{, }  \frac{m}{2}<\alpha \leqslant m
  . \label{minus killing}
\end{equation}

\begin{lemma}\label{spinor-constant}
  \label{non-negativity of V}Let $V = |s_{\alpha} |^2$, then $V^2 - | \nabla
  V|^2$ is a non-negative constant.
\end{lemma}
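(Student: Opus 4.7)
The plan is to establish the Hessian identity $\nabla^{2} V = V g$, deduce constancy of $V^{2} - |\nabla V|^{2}$ by a one-line computation, and verify pointwise non-negativity via an operator-norm bound from the Clifford algebra.

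For the Hessian, I would invoke Lemma \ref{hessian and boundary derivative} with $c_1 = c_2 = e_\alpha$, the $\alpha$-th standard basis vector of $\mathbb{C}^m$. Then $\langle c_1, s \rangle \cdot \langle c_2, s \rangle = \langle s_\alpha, s_\alpha \rangle = V$, so the lemma yields the tensor identity $\nabla^{2} V = V g$. Differentiating $|\nabla V|^{2} = g^{ij}(\nabla_i V)(\nabla_j V)$ in direction $e_k$ gives
\begin{equation}
\nabla_k |\nabla V|^{2} = 2 g^{ij}(\nabla_k \nabla_i V)(\nabla_j V) = 2 V g^{ij} g_{ki} \nabla_j V = 2 V \nabla_k V = \nabla_k V^{2},
\end{equation}
hence $\nabla(V^{2} - |\nabla V|^{2}) = 0$, so that $V^{2} - |\nabla V|^{2}$ is constant on $\Omega$.

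For non-negativity, fix any point $x \in \Omega$. If $\nabla V(x) = 0$ there is nothing to show, so assume $\nabla V(x) \neq 0$ and set $\xi = \nabla V / |\nabla V|$ at $x$. From the imaginary Killing equation \eqref{minus killing} one obtains $\nabla_\xi V = \pm \langle \epsilon c(\xi) s_\alpha, s_\alpha \rangle$. Since $n$ is even, $\epsilon$ anticommutes with $c(\xi)$, which gives $(\epsilon c(\xi))^{2} = -\epsilon^{2} c(\xi)^{2} = 1$; together with the self-adjointness computation $(\epsilon c(\xi))^{*} = c(\xi)^{*} \epsilon = -c(\xi)\epsilon = \epsilon c(\xi)$, this shows $\epsilon c(\xi)$ is a self-adjoint involution, hence of operator norm $1$. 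Therefore
\begin{equation}
|\nabla V(x)| = |\nabla_\xi V(x)| = |\langle \epsilon c(\xi) s_\alpha, s_\alpha \rangle| \leq |s_\alpha(x)|^{2} = V(x),
\end{equation}
and $V^{2} - |\nabla V|^{2} \geq 0$ at $x$; combined with the constancy proved above, it is a non-negative constant on $\Omega$. No significant obstacle is anticipated, as the main structural ingredient (the Hessian identity) has already been established in Lemma \ref{hessian and boundary derivative}, and the remainder reduces to a short algebraic computation in the Clifford algebra.
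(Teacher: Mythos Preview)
Your proof is correct. The constancy argument is essentially the same as the paper's---both establish $\nabla^{2}V = Vg$ and then differentiate $V^{2}-|\nabla V|^{2}$; you simply invoke Lemma~\ref{hessian and boundary derivative} as a shortcut rather than recomputing the Hessian from scratch.

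Where you genuinely diverge is in the non-negativity argument. The paper exhibits the explicit spinor $\epsilon s_{\alpha} - V^{-1}\sum_{j}\nabla_{j}V\, c(e_{j})\, s_{\alpha}$ and computes that its squared length equals $V^{2}-|\nabla V|^{2}$. Your route---observing that $\epsilon c(\xi)$ is a self-adjoint involution and applying Cauchy--Schwarz---is shorter and more conceptual. The paper's version, however, buys something extra: it immediately identifies the equality case as $\epsilon s_{\alpha} = V^{-1}c(\nabla V)s_{\alpha}$, which is exactly the characterization~\eqref{equivalent type I} of type~I spinors used in Lemma~\ref{type I implies parallel}. Your Cauchy--Schwarz argument yields the same equality condition (eigenvector of $\epsilon c(\xi)$), but the paper's formulation makes the connection to the subsequent type~I analysis more transparent.
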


\begin{proof}
  The proof is via direct calculation using \eqref{minus killing}. We show for
  $1 \leq \alpha \leq m / 2$. Let $p$ be a point in $\Omega$ and
  assume that $\{e_i \}$ is a geodesic normal frame at $p$, then $\nabla_{e_i}
  e_j = 0$ at $p$. We calculate the first and second derivatives of $V$.
  First,
\begin{align}
& \nabla_{e_i} V = \nabla_{e_i} \langle s_{\alpha}, s_{\alpha} \rangle
\\
= & \langle \nabla_{e_i} s_{\alpha}, s_{\alpha} \rangle + \langle
s_{\alpha}, \nabla_{e_i} s_{\alpha} \rangle \\
= & \tfrac{1}{2} \langle \epsilon c (e_i) s_{\alpha}, s_{\alpha}
\rangle + \tfrac{1}{2} \langle s_{\alpha}, \epsilon c (e_i)
s_{\alpha} \rangle \\
= & \langle \epsilon c (e_i) s_{\alpha}, s_{\alpha} \rangle .
\end{align}
  Hence
\begin{align}
& \nabla_{e_j} \nabla_{e_i} V \\
= & \nabla_{e_j} \langle \epsilon c (e_i) s_{\alpha}, s_{\alpha}
\rangle \\
= & \langle \epsilon c (e_i) \nabla_{e_j} s_{\alpha}, s_{\alpha}
\rangle + \langle \epsilon c (e_i) s_{\alpha}, \nabla_{e_j}
s_{\alpha} \rangle \\
= & \tfrac{1}{2} \langle \epsilon c (e_i) \epsilon c
(e_j) s_{\alpha}, s_{\alpha} \rangle + \tfrac{1}{2} \langle \epsilon
 c (e_i) s_{\alpha}, \epsilon c (e_j) s_{\alpha} \rangle
\\
= & - \tfrac{1}{2} \langle c (e_i) c (e_j) s_{\alpha} \rangle -
\tfrac{1}{2} \langle c (e_j) c (e_i) s_{\alpha}, s_{\alpha} \rangle
\\
= & |s_{\alpha} |^2 \delta_{i j} = V \delta_{i j},
\end{align}
  where we have used the simple facts that $- 2 \delta_{i j} = c (e_i) c (e_j)
  + c (e_j) c (e_I)$, $\epsilon c (e_i) = - c (e_i) \epsilon$ and
  $\epsilon^2 = 1$. So
\begin{align}
& \nabla_{e_i} (V^2 - | \nabla V|^2) \\
= & 2 V \nabla_i V - 2 \sum_j \nabla_{e_j} \nabla_{e_i} V \nabla_{e_j} V
\\
= & 2 V \nabla_i V - 2 \sum_j V \delta_{i j} \nabla_{e_j} V = 0
\end{align}
  and hence $V^2 - | \nabla V|^2$ is a constant. To check that $V^2 - | \nabla
  V|^2$ is non-negative, we calculate the squared length of the spinor
  $\epsilon s_{\alpha} - V^{- 1} \sum_j \nabla_j V c (e_j) s_{\alpha}$ is
  precisely $V^2 - | \nabla V|^2$. Indeed,
\begin{align}
& | \epsilon s_{\alpha} - V^{- 1} \sum_j \nabla_j V c (e_j) s_{\alpha}
|^2 \\
= & \langle \epsilon s_{\alpha}, \epsilon s_{\alpha} \rangle - V^{-
1} \sum_j \nabla_j V \langle \epsilon s_{\alpha}, c (e_j) s_{\alpha}
\rangle \\
& \quad - V^{- 1} \sum_j \nabla_j V \langle c (e_j) s_{\alpha},
\epsilon s_{\alpha} \rangle \\
& \quad + V^{- 2} \sum_{j, k} \nabla_j V \nabla_k
V\ensuremath{\operatorname{Re}} \langle c (e_j) s_{\alpha}, c (e_k)
s_{\alpha} \rangle .
\end{align}
  Here $\ensuremath{\operatorname{Re}}$ denotes taking the real part. Since
  $\nabla_{e_i} V = \langle \epsilon c (e_i) s_{\alpha}, s_{\alpha}
  \rangle$, so
  \[ \langle c (e_j) s_{\alpha}, \epsilon s_{\alpha} \rangle = \langle
     \epsilon s_{\alpha}, c (e_j) s_{\alpha} \rangle = \nabla_j V \]
  Note that $\langle \epsilon s_{\alpha}, \epsilon s_{\alpha} \rangle =
  |s_{\alpha} |^2 = V$ and
  \[ \ensuremath{\operatorname{Re}} \langle c (e_j) s_{\alpha}, c (e_k)
     s_{\alpha} \rangle = |s_{\alpha} |^2 \delta_{i j} = V \delta_{i j}, \]
  we obtain that
  \[ | \epsilon s_{\alpha} - V^{- 1} \sum_j \nabla_j V c (e_j) s_{\alpha}
     |^2 = V^2 - | \nabla V|^2,\]
  which is obviously non-negative.
\end{proof}

\begin{definition}\label{defn-typeIII}
  Let $\phi$ be a spinor which satisfies
  \begin{equation}
    \nabla_{e_i} \phi \pm \tfrac{1}{2} \epsilon c (e_i) \phi = 0,
    \label{general killing}
  \end{equation}
  and $V_{\phi} = | \phi |^2$. We say that $\phi$ is of type I if $V_{\phi} =
  0$ and of type II if $V_{\phi} > 0$.
\end{definition}
\begin{remark}\label{rem-typeI}
    The proof of Lemma \ref{spinor-constant} is the same as the proof of \cite[Lemma 5]{Baum1989}. According to Lemma \ref{spinor-constant}, a spinor $\phi$ is of type I if and only if there exists a unit vector $\nu_0\in T_pM$ such that $\epsilon c(\nu_0)\phi=\phi$.
\end{remark}

By the proof of Lemma \ref{non-negativity of V}, we have that $\phi$ which
satisfies \eqref{general killing} is of type I if and only if
\begin{equation}
  \epsilon \phi = \mp c (\nabla \log V) \phi.
  \label{equivalent type I}
\end{equation}

\begin{lemma}
  \label{type I implies parallel}If a spinor $\phi$ satisfies \eqref{general killing}
  and is of type I, let $F$ be a level set of $V = | \phi |^2$, then $\phi
  |_F$ is a parallel spinor.
\end{lemma}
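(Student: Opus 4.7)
The plan is to combine the generalized Killing equation \eqref{general killing} with the Type I characterization \eqref{equivalent type I} and with the Hessian formula $\nabla_i\nabla_j V = V\delta_{ij}$ extracted from the proof of Lemma \ref{non-negativity of V}, and then to invoke the spinorial Gauss formula for a hypersurface, which in the paper's conventions is the boundary spin connection $\nabla^{S_N,\partial}_{\xi} = \nabla_{\xi} + \tfrac{1}{2}c(\nabla_{\xi}e_n)c(e_n)$ introduced in Section \ref{sec:new dirac}.

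First I would work in the $+$ case of \eqref{general killing}, so $\nabla_\xi\phi = -\tfrac{1}{2}\epsilon c(\xi)\phi$ and, by \eqref{equivalent type I}, $\epsilon\phi = -c(\nabla\log V)\phi$. Using $\epsilon c(\xi) = -c(\xi)\epsilon$ gives
\begin{equation*}
\nabla_\xi \phi = \tfrac{1}{2}c(\xi)\epsilon\phi = -\tfrac{1}{2}c(\xi)c(\nabla\log V)\phi.
\end{equation*}
Since $F$ is a level set of $V$, the vector $\nabla\log V$ is normal to $F$, so for $\xi\in TF$ we have $\langle\xi,\nabla\log V\rangle = 0$ and $c(\xi)$ anticommutes with $c(\nabla\log V)$; hence
\begin{equation*}
\nabla_\xi\phi = \tfrac{1}{2}c(\nabla\log V)c(\xi)\phi.
\end{equation*}

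Next I would use the Type I identity $|\nabla V| = V$ (the non-negative constant of Lemma \ref{non-negativity of V} vanishes) to conclude that $\nu := \nabla\log V$ is a unit normal to $F$, and compute
\begin{equation*}
\nabla^M_\xi \nu = \tfrac{1}{V}\nabla^M_\xi \nabla V - \tfrac{\xi(V)}{V^2}\nabla V = \xi
\end{equation*}
for any $\xi\in TF$, using $\nabla_i\nabla_j V = V\delta_{ij}$ together with $\xi(V) = 0$ along $F$. In other words, the level set $F$ is totally umbilic with shape operator equal to the identity.

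Finally, applying the spinorial Gauss formula (with $e_n=\nu$) one computes
\begin{equation*}
\nabla^{S_N,\partial}_\xi\phi = \nabla^M_\xi\phi + \tfrac{1}{2}c(\nabla^M_\xi\nu)c(\nu)\phi = \tfrac{1}{2}c(\nu)c(\xi)\phi + \tfrac{1}{2}c(\xi)c(\nu)\phi = -\langle\nu,\xi\rangle \phi = 0,
\end{equation*}
which is exactly the assertion that $\phi|_F$ is parallel on $F$. The $-$ case of \eqref{general killing} is identical, flipping all occurrences of $\epsilon\phi$ vs.\ $c(\nabla\log V)\phi$. I do not anticipate a genuine obstacle here: the main point is that the Hessian identity $\nabla^2 V = V g$ makes level sets of $V$ totally umbilic, and once this is combined with the algebraic relation $\epsilon\phi = \mp c(\nabla\log V)\phi$ the boundary Clifford terms cancel automatically; the only care required is sign bookkeeping between the two cases in \eqref{general killing} and the choice of inward versus outward normal in the boundary connection.
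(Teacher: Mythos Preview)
Your proposal is correct and follows essentially the same approach as the paper: both arguments compute the second fundamental form of $F$ from the Hessian identity $\nabla^2 V = Vg$, then feed the Killing equation together with the type~I relation $\epsilon\phi = \mp c(\nabla\log V)\phi$ into the spinorial Gauss formula to see the boundary connection vanish. The only cosmetic difference is that the paper writes the induced connection in the form $\nabla^{\partial}_{e_i}\phi = -c(e_n)\nabla_{e_i}\phi - \tfrac{1}{2}A_{ij}c(e_j)\phi$ (which differs from the $\nabla^{S_N,\partial}$ you cite by the usual $c(e_n)$-identification of hypersurface spinors), whereas you invoke the version from Section~\ref{sec:new dirac} directly; both lead to the same cancellation.
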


\begin{proof}
  Let $e_n = \nabla \log V$, since $\phi$ is of type I, $e_n$ is a unit normal
  to $F$. Let $X$ be any vector field, then
\begin{align}
& \nabla_X e_n \\
= & \nabla_X \nabla \log V \\
= & \nabla_X (V^{- 1} \nabla V) \\
= & - V^{- 2} \nabla_X V \nabla V + V^{- 1} \nabla_X \nabla V \\
= & - \langle X, e_n \rangle e_n + X.
\end{align}
  The induced connection on $F$ is given by
  \[ \nabla^{\partial}_{e_i} \phi = - c (e_n) \nabla_{e_i} \phi - \tfrac{1}{2}
     A_{i j} c (e_j) \phi, \]
  where $A$ is the second fundamental form of $F$ in $\Omega$. Since $A_{i j}
  = \langle \nabla_{e_i} e_n, e_j \rangle = \delta_{i j}$,
  \[ \nabla^{\partial}_{e_i} \phi = - c (e_n) \nabla_{e_i} \phi - \tfrac{1}{2}
     c (e_i) \phi . \]
  Using \eqref{general killing} and \eqref{equivalent type I}, we see
  \[ \nabla^{\partial}_{e_i} \phi = \pm \tfrac{1}{2} c (e_n) \epsilon c
     (e_i) \phi - \tfrac{1}{2} c (e_i) \phi = 0. \]
  Hence, $\phi |_F$ is a parallel spinor.
\end{proof}

\subsection{Proof of Theorem \ref{intersect rigidity}}

Now we finish the proof of Theorem \ref{intersect rigidity}. 
First, we show every face of $\Omega$ is umbilic.
\begin{lemma} \label{curvature of boundary}
The principal curvatures of a face are
  constant and equal to $\langle \partial_{x^1}, N \rangle$.
\end{lemma}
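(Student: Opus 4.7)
The plan is to differentiate the boundary condition \eqref{bc brendle s} along directions tangent to $F$, substitute the imaginary Killing equation \eqref{eqn-10 s} for $\nabla_X s$, and then use the Clifford anticommutation on the $S_{\Omega_\delta}$ factor to isolate a first-order identity of the form $\nabla_X \nu = -\langle \partial_{x^1}, N\rangle X$ on $F$, which is exactly umbilicity with the claimed constant principal curvatures.

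In more detail, I would fix a face $F$ with Euclidean outward unit normal $N$ and $g$-inward unit normal $\nu$, pick $X \in TF$, and differentiate $(\omega_N s)_\alpha = -\epsilon c(\nu) s_\alpha$ along $X$. Replacing $\nabla_X s_\beta$ by $\tfrac{1}{2}\epsilon c(X)(\omega_{N_0} s)_\beta$ via \eqref{eqn-10 s} produces the term $\tfrac{1}{2}\epsilon c(X)(\omega_N\omega_{N_0} s)_\alpha$ on the left-hand side. The key algebraic ingredient is the Clifford relation
\[
\omega_N \omega_{N_0} + \omega_{N_0}\omega_N = 2\langle N, N_0\rangle\,I \quad \text{on } \mathbb{C}^m,
\]
which, together with a further use of the boundary condition applied to $\omega_N s$, rewrites $(\omega_N\omega_{N_0} s)_\alpha$ as $2\langle N, N_0\rangle s_\alpha + \epsilon c(\nu)(\omega_{N_0}s)_\alpha$. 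After substituting back, the terms quadratic in $\omega_{N_0} s$ combine into $\tfrac{1}{2}\bigl(c(\nu)c(X)+c(X)c(\nu)\bigr)(\omega_{N_0}s)_\alpha = -\langle \nu, X\rangle(\omega_{N_0}s)_\alpha$, which vanishes by tangency of $X$.

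What survives is the identity
\[
\epsilon\,c\bigl(\langle \partial_{x^1}, N\rangle X + \nabla_X \nu\bigr) s_\alpha = 0,\qquad \alpha = 1,\ldots,m,
\]
at every point of $F$. Since $c(v)$ has trivial kernel whenever $v\neq 0$ and, by Proposition \ref{full independence}, the components $\{s_\alpha\}$ cannot all vanish at a single point, this forces $\nabla_X \nu = -\langle \partial_{x^1}, N\rangle X$ for every $X \in TF$. Hence the Weingarten operator $-\nabla\nu$ of $F$ equals the scalar $\langle \partial_{x^1}, N\rangle$ times the identity on $TF$, so $F$ is totally umbilic with principal curvatures constantly equal to $\langle \partial_{x^1}, N\rangle$, as claimed.

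The main obstacle is the Clifford bookkeeping: $\omega_N$ generally does not preserve the $\omega_{N_0}$-eigenspaces $S_1$ and $S_2$ (unless $N = \pm N_0$), and it is precisely the anticommutation identity above, paired with a second application of the boundary condition, that cleanly separates the piece proportional to $\nabla_X \nu$ from the remaining Clifford-quadratic terms. Once this reduction is done, injectivity of $c(v)$ and Proposition \ref{full independence} finish the proof at once.
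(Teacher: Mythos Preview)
Your proof is correct and follows essentially the same route as the paper: differentiate the boundary identity $\omega_N s = -\epsilon c(\nu) s$ tangentially, substitute the Killing equation \eqref{eqn-10 s}, invoke the anticommutation $\omega_N\omega_{N_0}+\omega_{N_0}\omega_N = 2\langle N,N_0\rangle$, and apply the boundary condition a second time to reduce to a Clifford-multiplication identity that forces umbilicity. The only cosmetic difference is that the paper first diagonalizes the second fundamental form and arrives at the scalar relation $(-\kappa_i + \langle N,N_0\rangle)\,\omega_N\epsilon c(e_i) s = 0$, whereas you keep $X\in TF$ general and obtain the vector identity $\nabla_X\nu = -\langle N_0,N\rangle X$ directly. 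One small remark: you cite Proposition~\ref{full independence}, which belongs to the smooth-domain section; in the polytope setting all you actually need (and all the paper uses) is that $s$ has at least one nonzero component, which already follows from $\hat\nabla s = 0$ and $s\not\equiv 0$.
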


\begin{proof}
Let $e_i$ be an orthonormal frame such that $e_n = \nu$ and the second
fundamental form $A$ of a face $F$ in $\Omega$ is diagonalized at some point
$x \in F$, that is, $A_{ij} = \kappa_i \delta_{ij}$ where $i, j \neq n$.

By the boundary condition on $F$, $\omega_N s = - \epsilon c (\nu) s$. By
differentiating $\omega_N \epsilon c (\nu) s$ and using $\omega_N s = -
\epsilon c (\nu) s$, we see
\begin{align}
  & \nabla_{e_i} (\omega_N \epsilon c (\nu) s) \\
  = & \omega_N \epsilon c (\nabla_{e_i} \nu) s + \omega_N \epsilon c
  (\nu) \nabla_{e_i} s \\
  = & - \kappa_i \omega_N \epsilon c (e_i) s + \tfrac{1}{2} \omega_N
  \epsilon c (\nu) \omega_{N_0} \epsilon c (e_i) s \\
  = & - \kappa_i \omega_N \epsilon c (e_i) s - \tfrac{1}{2} (2 \langle N,
  N_0 \rangle - \omega_{N_0} \omega_N) \epsilon c (e_i) \epsilon c (\nu)
  s \\
  = & - \kappa_i \omega_N \epsilon c (e_i) s + \tfrac{1}{2} (2 \langle N,
  N_0 \rangle - \omega_{N_0} \omega_N) \epsilon c (e_i) \omega_N s
  \\
  = & - \kappa_i \omega_N \epsilon c (e_i) + \langle N, N_0 \rangle
  \omega_N \epsilon c (e_i) s - \tfrac{1}{2} \omega_{N_0} \epsilon c
  (e_i) s 
\end{align}
By differentiating $s$, $- \nabla_{e_i} s = - \tfrac{1}{2} \omega_{N_0}
\epsilon c (e_i) s$ and using the boundary condition again, we conclude that
\[ - \kappa_i \omega_N \epsilon c (e_i) s + \langle N, N_0 \rangle \omega_N
   \epsilon c (e_i) s = 0. \]
As $s$ has at least one nonzero component, we see \( \kappa_i = \langle N, N_0 \rangle . \)
\end{proof}

Returning to our problem regarding Theorem \ref{intersect rigidity}.

\begin{proof}[Proof of Theorem \ref{intersect rigidity}]
  The principal curvatures of faces $F_{\ell}$ are given by Lemma
  \ref{curvature of boundary}.
  It follows from Proposition \ref{lm:goal} that the components of $s$ are linearly independent, hence, we can use similar arguments as
  in the proof of Theorem \ref{hyperbolic Llarull} to show that $(\Omega,g)$ is hyperbolic. It also follows from Proposition \ref{lm:goal} that all $s_\alpha$ are of type I. 
 By Lemma \ref{type I implies parallel},
  $s_{\alpha}$ restricted on its level set are parallel and hence its level
  sets are flat. We can pick a coordinate such that $g = \tfrac{1}{(x^1)^{2}}
  ((\mathrm{d} x^1)^2 + \cdots + (\mathrm{d} x^n)^2)$ in some set $\Omega
  \subset \mathbb{R}_+^n$ such that level sets of $|s_{\alpha} |$ lies in
  $x^1$-coordinate hyperplane. By Lemma \ref{curvature of boundary}, every
  face is umbilic, so it is either part of a sphere or a linear hyperplane. By
Lemma \ref{hessian and boundary derivative}, $\nabla_{\nu_{\ell}} |s_{\alpha} |^2 = \langle N_{\ell},
  \tfrac{\partial}{\partial x^1} \rangle  |s_{\alpha} |^2$, hence every
  face has to be a part of a linear hyperplane. Because if the face were a part of a sphere, the value of
$\nabla_{\nu_{\ell}} |s_{\alpha} |^2 / |s_{\alpha} |^2$ would not be a constant.
   We can conclude now that $(\Omega,g)$ is a polytope in some Poincar\'{e} half-space model.
\end{proof}

\section{Scalar curvature rigidity in odd-dimensional hyperbolic space} \label{odd D}

In this section, we address the odd-dimensional case of the scalar curvature rigidity results, specifically Theorems \ref{hyperbolic Llarull} and \ref{intersect rigidity}, in hyperbolic space. Most of the proofs are similar to the even-dimensional
case. We only highlight the main differences and leave the details.

Let $\sigma \in S_{\Omega_g} \otimes S_{\Omega_{\delta}}$, we consider the
following connection
\begin{equation}
  \hat{\nabla}_{e_i} \sigma = \nabla_{e_i} \sigma + \tfrac{\sqrt{- 1}}{2} c
  (e_i) \otimes (\sqrt{- 1} \bar{c} (N_0)) \sigma \label{twisted imaginary killing}
\end{equation}
and its associated Dirac operator is given by
\begin{equation} \label{twisted imaginary Dirac}
  \hat{D} \sigma = c (e_i) \hat{\nabla}_{e_i} \sigma .
  \end{equation}
We introduce the local boundary condition
\begin{equation} \label{twisted imaginary chi}
  \chi \sigma = (\sqrt{- 1} c (e_n) \otimes \sqrt{- 1} \bar{c} (N))  \sigma,
  \end{equation}
where $e_n$ and $N$ are respectively the unit normal of $\partial \Omega$ in
$\Omega$ with respect to the metric $g$ and the flat metric. Analogous to
Proposition \ref{Dirac Witten}, we have the
Schrodinger-Lichnerowicz formula whose proof we shall omit.

\begin{proposition}
Let $\sigma \in S_{\Omega_{g}} \otimes S_{\Omega_{\delta}}$, then
  \begin{equation}\label{imaginary boundary dirac chi anti}
    D^{\partial} \chi + \chi D^{\partial} = 0,
  \end{equation}
  (that is, $D^{\partial}$ and $\chi$ anti-commute) and
\begin{align}\label{eqn-D imaginary}
& \int_N | \tilde{D} \sigma |^2 \\
  = & \int_N | \tilde{\nabla} \sigma |^2  
      + \int_N \tfrac{1}{4} (R_g + n(n-1)) |\sigma|^2  \\
      & \quad + \int_{\partial N}
     \tfrac{1}{4} \langle D^{\partial} (\sigma + \chi \sigma), \sigma - \chi
\sigma \rangle + \tfrac{1}{4} \langle D^{\partial} (\sigma - \chi \sigma),
      \sigma + \chi \sigma \rangle \\
     &  \quad+  \int_{\partial N} \langle \mathcal{A} \sigma,\sigma \rangle + \tfrac{n - 1}{2}  \langle c (e_n) \bar{c}(N_0) \sigma, \sigma \rangle,
 \end{align}
where 
\begin{equation}\label{imaginary A}
  \mathcal{A}:= \tfrac{1}{2} H_g - \tfrac{1}{2} \sum_{1 \leq i \leq n-1} c (e_n) c
  (e_i) \bar{c} (\bar{\nabla}_{e_i} \bar{e}_n) \bar{c} (\bar{e}_n).
  \end{equation}
\end{proposition}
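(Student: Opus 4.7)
The plan is to follow the template of Proposition \ref{Dirac Witten}, replacing the composition $(\epsilon\otimes\bar\epsilon)\bar c(\partial_s)$ of the even-dimensional case by the self-adjoint involution $\sqrt{-1}\,\bar c(N_0)$ throughout. The key algebraic inputs are $\sum_i c(e_i)^2=-n$, $\bar c(N_0)^2=-1$, and the parallelism $\bar\nabla N_0=0$ in the Euclidean background, together with flatness of $\delta$.

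For the anti-commutation \eqref{imaginary boundary dirac chi anti}, expand $\chi$ as in \eqref{twisted imaginary chi} and $D^\partial=\sum_{j<n}c(e_n)c(e_j)\nabla^\partial_{e_j}$. On the $TN$-factor, each $c(e_n)c(e_j)$ anti-commutes with $\sqrt{-1}\,c(e_n)$; on the $f^*TM$-factor, $\sqrt{-1}\,\bar c(N)$ commutes with the Clifford word from the first factor, and the derivative of $\bar c(N)$ along $\partial\Omega$ is absorbed into the boundary connection correction, exactly as in the even case.

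For the integral identity, one first computes $\hat D=D+\Psi$ with $\Psi=\tfrac{n}{2}\bar c(N_0)$, yielding
\[
|\hat D\sigma|^2=|D\sigma|^2+\langle\Psi\sigma,D\sigma\rangle+\langle D\sigma,\Psi\sigma\rangle+\tfrac{n^2}{4}|\sigma|^2.
\]
Apply the standard twisted Schr\"odinger--Lichnerowicz formula $D^2=\nabla^*\nabla+R_g/4$ (the $f^*S_{\Omega_\delta}$-curvature contribution vanishes because $\delta$ is flat), integrate $|D\sigma|^2$ by parts, and replace $|\nabla\sigma|^2$ by $|\hat\nabla\sigma|^2$ via the pointwise identity
\[
|\nabla\sigma|^2=|\hat\nabla\sigma|^2-\tfrac{1}{n}\langle\Psi\sigma,D\sigma\rangle-\tfrac{1}{n}\langle D\sigma,\Psi\sigma\rangle-\tfrac{n}{4}|\sigma|^2,
\]
verified exactly as in Proposition \ref{Dirac Witten}. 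Since $\bar\nabla N_0=0$, the interior gradient term of Proposition \ref{Dirac Witten} drops out, and integration by parts of $\langle D\sigma,\Psi\sigma\rangle+\langle\Psi\sigma,D\sigma\rangle$ leaves only the boundary contribution $\tfrac{n}{2}\int_{\partial N}\langle c(e_n)\Psi\sigma,\sigma\rangle$, whose $\tfrac{n-1}{n}$-weighted version is $\tfrac{n-1}{2}\int_{\partial N}\langle c(e_n)\bar c(N_0)\sigma,\sigma\rangle$. The surviving bulk scalar $\tfrac{n^2}{4}-\tfrac{n}{4}=\tfrac{n(n-1)}{4}$ combines with $R_g/4$ to give $\tfrac{1}{4}(R_g+n(n-1))|\sigma|^2$.

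The final rearrangement uses $\nabla_{e_n}+c(e_n)D=D^\partial+\mathcal{A}$ with $\mathcal{A}$ as in \eqref{imaginary A}, and splits $\langle D^\partial\sigma,\sigma\rangle$ into the two quarter-terms of the statement via \eqref{imaginary boundary dirac chi anti}. The only real obstacle is careful tracking of the $\sqrt{-1}$-factors so that self- versus skew-adjointness of each operator is preserved at every integration by parts; beyond that, the argument is algebraically identical to Proposition \ref{Dirac Witten}.
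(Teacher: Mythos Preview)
Your proposal is correct and is exactly the approach the paper takes: the paper omits the proof entirely, stating only that it is ``analogous to Proposition~\ref{Dirac Witten}'', and your sketch fills in precisely those analogous steps---replacing $(\epsilon\otimes\bar\epsilon)\bar c(\partial_s)$ by $\sqrt{-1}\,\bar c(N_0)$, noting that flatness of $\delta$ kills the twisted curvature term in $\mathcal R$, and that $\bar\nabla N_0=0$ kills the interior gradient term. One small slip: the boundary contribution from integrating $\langle D\sigma,\Psi\sigma\rangle+\langle\Psi\sigma,D\sigma\rangle$ by parts is $\int_{\partial N}\langle c(e_n)\Psi\sigma,\sigma\rangle$ without an extra $\tfrac{n}{2}$; the factor $\tfrac{n}{2}$ you wrote comes from expanding $\Psi=\tfrac{n}{2}\bar c(N_0)$, not from the divergence step, but your final coefficient $\tfrac{n-1}{2}$ is correct.
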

\begin{remark} \label{brendle index}
  Using the index theory of \cite[Proposition 2.15]{brendle-scalar-2024}, the operator
  \[ \sigma \mapsto (D \sigma, \sigma - \chi \sigma) \]
  is of Fredholm index 1 (note that we have omitted the underlying Sobolev space). Moreover,
  if we replace the Dirac operator $D$ with $\hat{D}$, or if we replace the Gauss map
  $N$ in $\chi$ with a map homotopic to the Gauss map, the index is still 1.
  \end{remark}
  
\begin{remark} \label{rem:imaginary trace form}
   We can also obtain the formula \eqref{sl for polytope epsilon} using
   $\hat{\nabla}$, $\hat{D}$ and $\chi$ defined \eqref{twisted imaginary killing},
   \eqref{twisted imaginary Dirac} and \eqref{twisted imaginary chi}.
  \end{remark}

Now we give an alternative proof of Theorem \ref{hyperbolic Llarull} 
using simple linear algebra.

\begin{lemma}
  Let $V$ and $W$ be finite-dimensional real vector spaces of the same
  dimension. The space $W$ is equipped with an inner product and $V$ is
  equipped with two inner products $G_1$ and $G_2$. Let $L : V \to W$ be a
  linear isomorphism, the trace norms of $L$ are defined by
  $\|L\|_{\ensuremath{\operatorname{tr}}, i} = \sup_Q
  \ensuremath{\operatorname{tr}} (Q L)$, where the supremum is taken over all
  linear isometries $Q : W \to (V, G_i)$. If $G_2 \geqslant G_1$, then
  $\|L\|_{\ensuremath{\operatorname{tr}}, 2} \leqslant
  \|L\|_{\ensuremath{\operatorname{tr}}, 1}$. Equality is achieved if and only if
  $G_2 = G_1$.
\end{lemma}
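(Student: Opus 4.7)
The plan is to identify $\|L\|_{\operatorname{tr}, i}$ with a sum of singular values, reduce the comparison between the two trace norms to a single inner product via an explicit isometry, and then handle the equality case by a determinant calculation.

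First, I compute the trace norm concretely. Fix an SVD of $L$ with respect to $G_i$: a $G_i$-orthonormal basis $\{e_j\}$ of $V$ and a $\langle\cdot,\cdot\rangle_W$-orthonormal basis $\{f_j\}$ of $W$ with $Le_j = \sigma_j^{(i)} f_j$, where $\sigma_j^{(i)} = \sigma_j^{(i)}(L) \geq 0$ are the singular values of $L:(V,G_i) \to W$. For any isometry $Q: W \to (V, G_i)$, writing $Q f_j = \sum_k q_{jk} e_k$, the matrix $(q_{jk})$ is orthogonal, and $\operatorname{tr}(QL) = \sum_j \sigma_j^{(i)} q_{jj}$. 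Maximizing over orthogonal matrices (diagonal entries in $[-1,1]$) yields
\begin{equation}
\|L\|_{\operatorname{tr}, i} = \sum_j \sigma_j^{(i)}(L),
\end{equation}
attained by the choice $Q f_j = e_j$.

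Next, I relate $\sigma_j^{(2)}$ and $\sigma_j^{(1)}$. Let $S: V \to V$ be the positive $G_1$-self-adjoint operator defined by $G_2(u,v) = G_1(Su, v)$; the hypothesis $G_2 \geq G_1$ is equivalent to $S \geq I$. A direct computation, using the $G_1$-self-adjointness of $S^{1/2}$, shows that $S^{-1/2}: (V, G_1) \to (V, G_2)$ is an isometric isomorphism, so precomposition preserves singular values: $\sigma_j^{(2)}(L) = \sigma_j^{(1)}(L \circ S^{-1/2})$. The standard inequality $\sigma_j(AC) \leq \sigma_j(A)\|C\|_{\operatorname{op}}$ with $C = S^{-1/2}$, combined with $\|S^{-1/2}\|_{\operatorname{op}, G_1} \leq 1$ (from $S \geq I$), then gives $\sigma_j^{(2)}(L) \leq \sigma_j^{(1)}(L)$ for every $j$. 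Summing yields the desired inequality $\|L\|_{\operatorname{tr}, 2} \leq \|L\|_{\operatorname{tr}, 1}$.

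For the equality case, suppose $\|L\|_{\operatorname{tr}, 2} = \|L\|_{\operatorname{tr}, 1}$. Combined with the pointwise inequalities $\sigma_j^{(2)} \leq \sigma_j^{(1)}$, this forces $\sigma_j^{(2)}(L) = \sigma_j^{(1)}(L)$ for every $j$, and taking products
\begin{equation}
\prod_j \bigl(\sigma_j^{(2)}(L)\bigr)^2 = \prod_j \bigl(\sigma_j^{(1)}(L)\bigr)^2.
\end{equation}
From the definition of $S$ one checks $L_2^{\ast} = S^{-1} L_1^{\ast}$, so $\det(L_2^{\ast} L) = \det(L_1^{\ast} L)/\det S$. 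The equality of products therefore forces $\det S = 1$, and combined with $S \geq I$ (all eigenvalues $\geq 1$) this forces $S = I$, whence $G_1 = G_2$. The converse is immediate. The only real subtleties are verifying that $S^{-1/2}$ is an isometry from $(V, G_1)$ to $(V, G_2)$ and the adjoint transformation rule $L_2^{\ast} = S^{-1} L_1^{\ast}$; both are short direct calculations, and I do not anticipate any serious obstacle beyond these bookkeeping steps.
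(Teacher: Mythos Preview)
Your proof is correct. Both you and the paper reduce the comparison to a single inner product via the isometry $S^{-1/2}:(V,G_1)\to(V,G_2)$ (the paper writes this explicitly as the diagonal matrix $\operatorname{diag}(1/\sqrt{\mu_i})$ in a basis simultaneously diagonalizing $G_1$ and $G_2$), so the structural idea is shared. For the inequality itself the paper is more hands-on: for each orthogonal $Q$ it bounds $\operatorname{tr}(S^{-1/2}QL)=\sum_i\lambda_i/\sqrt{\mu_i}\le\sum_i|\lambda_i|=\operatorname{tr}(S'QL)$ using a sign matrix $S'\in O(n)$, whereas you pass through the identification $\|L\|_{\operatorname{tr},i}=\sum_j\sigma_j^{(i)}$ and invoke the singular-value inequality $\sigma_j(LS^{-1/2})\le\sigma_j(L)\,\|S^{-1/2}\|_{\operatorname{op}}$. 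The real divergence is in the equality case: the paper argues that at the maximizing $Q$ the diagonal entries $\lambda_i$ of $QL$ are nonzero (because $L$ is an isomorphism), forcing $\mu_i=1$; your determinant route, via $L_2^{\ast}=S^{-1}L_1^{\ast}$, $\det S=1$, and $S\ge I\Rightarrow S=I$, is cleaner and sidesteps any analysis of the maximizer.
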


\begin{proof}
  Let $\{e_i \}_{1 \leqslant i \leqslant n}$ be a basis of $V$ such that $G_1
  (e_i, e_j) = \delta_{ij}$ and $G_2 (e_i, e_j) = \mu_i \delta_{ij}$. Since
  $G_2 \geqslant G_1$, so $\mu_i \geqslant 1$ for all $1 \leqslant i \leqslant
  n$. Let $Q : W \to (V, G_1)$, then $SQ$ is an isometry from $W$ to $(V,
  G_2)$ where $S : V \to V$ is a linear map given by sending all $e_i$ to
  $\tfrac{1}{\sqrt{\mu_i}} e_i$.
  
  We fix an orthonormal basis $\{ \hat{E}_i \}_{1 \leqslant i \leqslant n}$ of
  $W$, now we can view maps between $W$ and $V$ as $\ell \times \ell$
  matrices, then a map $Q : W \to V$ given by
  \[ \hat{E}_i \mapsto Q \hat{E}_i := \sum_j Q_{ij} e_j \]
  is an isometry from $W$ to $(V, G_1)$ if and only if $\{Q_{ij} \}$ is an
  orthogonal matrix which we still denote by $Q$. We set $S = \mathrm{diag}
  (\tfrac{1}{\sqrt{\mu_1}}, \ldots, \tfrac{1}{\sqrt{\mu_n}})$, then $SQ$
  represents an isometry from $W$ to $(V, G_2)$. By definition of the trace norm,
  \[ \|L\|_{\mathrm{tr}, 1} = \sup_{Q \in O (n)} \mathrm{tr} (QL),
     \|L\|_{\mathrm{tr}, 2} = \sup_{Q \in O (n)} \mathrm{tr} (SQL) . \]
  Take an arbitrary orthogonal matrix $Q \in O (n)$, let $\lambda_i$ be the
  $i$-th diagonal entry of $QL$, then the $i$-th diagonal entry of $SQL$ is
  $\lambda_i / \sqrt{\mu_i}$. So
  \[ \mathrm{tr} (SQL) = \sum_i \lambda_i / \sqrt{\mu_i} \leqslant \sum_i |
     \lambda_i | = \mathrm{tr} (S' QL), \]
  where $S'$ is a suitable diagonal matrix depending on $Q$ with diagonal
  entries $1$ or $- 1$ such that all the diagonal entries of $S' QL$ are
  nonnegative. Note that $S' Q$ is also an orthogonal matrix.
  
  By definition of the trace norm,
  \[ \|L\|_{\mathrm{tr}, 2} = \sup_{Q \in O (n)} \mathrm{tr} (SQL) \leqslant
     \sup_{Q \in O (n)} \mathrm{tr} (S' QL) =\|L\|_{\mathrm{tr}, 1} . \]
  Since $L$ is a linear isomorphism, $\lambda_i \neq 0$. We easily find that
  the equality holds if and only if $\mu_i = 1$ for all $i$, that is, $G_2 =
  G_1$.
\end{proof}

\begin{proof}[Alternative proof of Theorem \ref{hyperbolic Llarull}]
  Using Remark \ref{brendle index}, we solve $\hat{{D}} s = 0$ subject to the boundary condition
  $\omega_N \sqrt{- 1} c(\nu) s = s$, then
\begin{align}
0 = & \int_{\Omega} | \hat{{D}} s|^2 \\
\geqslant & \int_{\Omega} | \hat{\nabla} s|^2 + \tfrac{1}{4} (R_g + n (n
- 1)) |s|^2 + \tfrac{1}{2} \int_{\Sigma} (H + (n - 1) \mathrm{d} x^1 (N)
-\| \mathrm{d} N\|_{\ensuremath{\operatorname{tr}}}) |s|^2 \\
\geqslant & \tfrac{1}{2} \int_{\partial \Omega} (H_{b} + (n - 1)
\mathrm{d} x^1 (N) -\| \mathrm{d} N\|_{\ensuremath{\operatorname{tr}},
\sigma}) |s|^2 .
\end{align}
  Note that $H_{b} = x^1 (H_{\delta} + (n - 1) \partial_N \log
  \tfrac{1}{x^1})$, so
  \[ 0 \geqslant \tfrac{1}{2} \int_{\partial \Omega} (x^1 H_{\delta} -\|
     \mathrm{d} N\|_{\ensuremath{\operatorname{tr}}, g|_{\partial \Omega}}) |s|^2 . \]
   We see that by $ g|_{\partial \Omega} \geqslant b|_{\partial \Omega}$ that $g|_{\partial \Omega}\geqslant (\tfrac{1}{x^1})^2
  \delta|_{\partial \Omega}$. By the previous lemma, we have that
  \[ \| \mathrm{d} N\|_{\ensuremath{\operatorname{tr}}, \sigma} \leqslant
    \| \mathrm{d} N\|_{\ensuremath{\operatorname{tr}}, \tfrac{1}{(x^1)^{2}} \delta|_{\partial \Omega}}
    = x^1 \| \mathrm{d} N\|_{\ensuremath{\operatorname{tr}}, \delta|_{\partial \Omega}}
    = x^1 H_{\delta}, \]
  which forces $\hat{\nabla} s = 0$ and $R_g + n (n - 1) = 0$. The rest of
  the argument is the same as the even-dimensional case.
\end{proof}

Now we prove the odd-dimensional case of Theorem \ref{intersect rigidity}.

\begin{proof}[Proof of Theorem \ref{intersect rigidity} in odd dimensions]
  We use again Brendle's smoothing \eqref{brendle smoothing explicit} $\Omega_{\lambda}$.
Using the index theory (Remark \ref{brendle index}), we can
solve the following problem
\[ \hat{D} \sigma^{(\lambda)} = 0 \text{ in } \Omega_{\lambda}, \text{ } \chi_{\lambda}
   \sigma^{(\lambda)} = \sigma^{(\lambda)}\text{ along } \partial \Omega_{\lambda}  \]
as the even-dimensional case
in
a sequence of approximating domains $\Omega_{\lambda}$.
Because the Schrodinger-Lichnerowicz formula \eqref{sl for polytope epsilon} (see Remark \ref{rem:imaginary trace form}) holds for smooth domains $\Omega_{\lambda}$ in the
same form, 
there exists a subsequence $\{\lambda_{l}\}_{l\in \mathbb{N}}$
such that as $l \to \infty$, we see that the map $N_{\lambda_l}$ tends to the normal $N$ of $\partial \Omega$ and the solution
$\sigma^{(\lambda_{l})}$ converge to a nonzero section $\sigma \in S_{\Omega_g}\otimes S_{\Omega_{\delta}}$ with
  \begin{equation}
  \hat{\nabla} \sigma = 0 \text{ in } \Omega, \text{ } \chi \sigma = \sigma
  \text{ along } \partial \Omega .
\end{equation}
We interpretate again the section $\sigma\in S_{\Omega_g} \otimes S_{\Omega_{\delta}}$ in terms of $m$-tuple of spinors $s$. As before, we can show that
the components of $s$ are linearly independent and
  \[ \sum_{i, j=1}^n (R (e_i, e_j, e_k, e_l) + \delta_{i k} \delta_{j l} - \delta_{i l}
    \delta_{j k}) c (e_i) c (e_j) s_{\mu} = 0 \] for any $1 \leqslant \mu \leqslant m$.
Since the
dimension is odd, the kernel of the spinor representation
$\ensuremath{\operatorname{Cl}} (T_x \Omega) \to
\ensuremath{\operatorname{End}} (S_{\Omega_g})$ is given by the $(-
1)$-eigenspace of the complex volume form $\Gamma = (\sqrt{- 1})^{\tfrac{n
+ 1}{2}} c (e_1) \cdots c (e_n) \in \ensuremath{\operatorname{Cl}} (T_x \Omega)$
(see Theorem 1.28 and Definition 1.31 of {\cite{BHMMM}}). Hence
  \[ \sum_{i < j} (R (e_i, e_j, e_k, e_l) + \delta_{i k} \delta_{j l} - \delta_{i l}
\delta_{j k}) c (e_i) c (e_j) (1 + \Gamma) c (e_i) c (e_j) = 0 \] and it follows
that
  \[ R (e_i, e_j, e_k, e_l) + \delta_{i k} \delta_{j l} - \delta_{i l} \delta_{j k} =
0. \] So $(\Omega, g)$ is hyperbolic. The calculation of the principal
curvatures of the boundary is the same as Lemma \ref{curvature of
boundary} (replacing only $\epsilon$ by $\sqrt{-1}$).
  
Recall that a spinor $s_{\lambda}$ is of type I if there exists a unit Euclidean
vector $e \in T_x \Omega$ such that $\sqrt{- 1} c (e) s_{\lambda} = s_{\lambda}$, see Remark \ref{rem-typeI}. As in
Proposition \ref{lm:goal}, we see that $s_{\lambda}$ is of type I for every
$1 \leqslant \lambda \leqslant m$. Let $V = |s_{\lambda} |^2 =:
\tfrac{1}{x^1}$, then $V$ gives the $x^1$-coordinate of the Poincar{\'e} half
space model which $\Omega$ lies in and we show similarly that $(\Omega, g)$ is a
polytope in this model.
\end{proof}

\bibliographystyle{alpha}
\bibliography{llarull-boundary}

\end{document}